\definecolor{halfgray}
{gray}{0.55}
\definecolor{webgreen}
{rgb}{0,0.4,0}
\definecolor{webbrown}
{rgb}{.8,0.1,0.1}
\definecolor{red}
{rgb}{1,0,0}
\newcommand \R {{ \mathbb R}}
\def\C{{\mathbb C}}
\newcommand \Z {{ \mathbb Z}}
\newcommand \N {{ \mathbb N}}
\newcommand \T {{ \mathbb T}}
\newcommand \Q {{ \mathbb Q}}
\newtheorem{theorem}{Theorem}[section]
\newtheorem {lemma} [theorem]{Lemma}
\newtheorem{corollary}[theorem]{Corollary}
\newtheorem{remark}[theorem]{Remark}
\newtheorem{definition}[theorem]{Definition}
\title[Time-changes of Heisenberg nilflows ]%
{ Time-changes of Heisenberg nilflows }
  \author{Giovanni Forni}
\author{Adam Kanigowski}
\address{Department  of Mathematics\\
  University of Maryland \\
  College Park, MD USA}
  \address{Department  of Mathematics\\
  Pennsylvania State University \\
  State College, PA USA}
\email{gforni@math.umd.edu}
\email{adkanigowski@gmail.com}
\keywords
      {     }
\subjclass
        {    }
\date{\today}
\dedicatory{\text{\rm In memory of Jean-Christophe Yoccoz}, \\ il miglior fabbro. }
\begin{document}

\begin{abstract}
  We consider the three dimensional Heisenberg nilflows. Under a full measure set Diophantine condition on the generator of the flow we construct {\it Bufetov functionals} which are asymptotic to ergodic integrals for sufficiently smooth functions, have a modular property and  scale exactly under the renormalization dynamics. By the asymptotic 
  property we derive results on limit distributions, which generalize earlier work of Griffin and Marklof \cite{GM} and Cellarosi and Marklof~\cite{CM}. We then prove analyticity of the functionals in the transverse directions  to the flow. As a consequence of this analyticity property we derive that there exists a full measure set of nilflows such that {\it generic} (non-trivial) time-changes are mixing and moreover have a ``stretched polynomial'' decay of correlations for sufficiently smooth functions (this strengthens a result of Avila, Forni,  and Ulcigrai~\cite{AFU}). Moreover we also prove that there exists a full Hausdorff dimension set of nilflows such that generic non-trivial time-changes have polynomial decay of correlations. 
\end{abstract}
\def\echo#1{\relax}
\maketitle

\section{Introduction}

This paper concerns  the smooth ergodic theory of parabolic flows, that is, flows characterized by polynomial (sub-exponential) divergence of nearby orbits.
 In particular we prove results  on limit distributions of Heisenberg nilflows and on the decay of correlations  of their non-trivial reparametrizations (time-changes). 
 Our approach is based on the construction of  {\it finitely additive H\"older measures} and {\it H\"older cocycles} for Heisenberg nilflows, asymptotic to ergodic integrals, following 
 the work of A.~Bufetov \cite{Bu} on translation flows and of Bufetov and G.~Forni~\cite{BF}, \cite{Fo1} on horocycle flows. H\"older cocycles for translation flows are closely related to ``limit shapes'' of ergodic sums for Interval Exchange Transformations, studied in the work of  S.~Marmi, P.~Moussa and J.-C.~Yoccoz \cite{MMY} on wandering intervals for affine Interval Exchange Transformations. In fact, roughly speaking, ``limit shapes'' are related to graphs of H\"older cocycles as functions of time.

We recall that the mixing property for generic, non-trivial time-changes of Heisenberg nilflows was proved by A.~Avila, G.~Forni and C.~Ulcigrai~\cite{AFU}. The main result of that paper was that for uniquely ergodic Heisenberg nilflow all non-trivial time-changes, within a dense subspace of time-changes, are mixing.  Under a Diophantine condition  the set of trivial time-changes has countable codimension and can be explicitly described in terms of invariant distributions for the nilflow.  

Results on limit theorems  for skew-translations, which appear as return maps (with constant return time) of Heisenberg nilflows, limited however  to a single character function, have more recently been proved by J.~Griffin and J.~Marklof~\cite{GM} and refined by F.~Cellarosi and Marklof~\cite{CM}  by an approach based on theta functions.  Their worked raised the question of possible relations between theta functions and Bufetov's H\"older cocycles, developed for other analogous dynamical systems in \cite{Bu} (translation flows), \cite{BF} (horocycle flows), \cite{BS} (tilings),  as a formalism to derive asymptotic theorem for ergodic averages and prove limit theorems. 

\smallskip

In this paper we generalize the results of Griffin and Marklof \cite{GM} on limit distributions, proving in particular that almost all limits of ergodic averages of arbitrary sufficiently smooth functions are distributions of  H\"older continuous functions on the Heisenberg nilmanifold, hence in particular they they have compact support.  Our main results, however, are on the decay of correlations of smooth functions for time-changes: we prove that it has polynomial (power law) speed for all non-trivial smooth time-changes of Heisenberg nilflows of bounded type, within a generic subspace of time-changes. As mentioned above, the study of limit distributions for parabolic flows has been developed only in recent years after Bufetov's work \cite{Bu} on translations flows
(and Interval Exchange Transformations). A comprehensive study of spatial and temporal limit theorems for dynamical systems of different type has been more recently carried
out in the work of D.~Dolgopyat and O.~Sarig \cite{DS}. 

\smallskip
The study of mixing properties of elliptic parabolic flows and their time-changes has a longer history.  For
instance, mixing properties of suspension flows over rotations and Interval Exchange Transformations have been investigated in depth (see for instance \cite{Ko1}, \cite{Ko2},
\cite{KS}, \cite{Sch},\cite{Ul1}, \cite{Ul2} and reference therein), mixing for reparametrizations of linear toral flows were investigated by B.~Fayad (see for instance
\cite{Fa2}), finally mixing for time-changes of classical horocycle flows was proved in a classical paper of B.~Marcus~\cite{Ma} after a partial result of Kushnirenko
\cite{Ku}.  As mentioned above mixing for time-changes of Heisenberg nilflows was investigated in \cite{AFU}. Work in progress of Avila, Forni, Ravotti and Ulcigrai indicates
that the methods developed there extend to proof of mixing for a dense set of nontrivial time-change for any uniquely ergodic nilflows.  Ravotti's paper~\cite{Rav2} is a step
in that direction. It should be remarked that there is an important difference between time-changes of linear toral flows and parabolic flows. In the parabolic case there
are often countably many obstructions to triviality of time-changes for Diophantine flows, while in the elliptic case of linear toral flows non-trivial time-changes can exist
only in the Liouvillean case.

\smallskip
Estimates on the decay of correlations of smooth functions for non-homogenous elliptic or parabolic flows are harder to come by and there are much fewer results in the literature. A classical paper of M.~Ratner established the decay rate for classical horocycle flows (as well as geodesic flows) on surfaces of constant negative curvature.
This result was generalized to sufficiently smooth time-changes of horocycle flows by Forni and Ulcigrai~\cite{FU}, who also proved that the spectrum remains Lebesgue. 
Fayad~\cite{Fa1} proved polynomial decay for a class of Kochergin-type flows on the $2$-torus and only recently, in \cite{FFK},  it was shown that there exists a class of Kochergin flows on the $2$-torus with Lebesgue maximal spectral type.  For locally Hamiltonian flows with a saddle loop on surfaces (or, more generally, for suspension flows over Interval Exchange Transformations with asymmetric logarithmic singularities of the roof function), Ravotti \cite{Rav1} was able to  prove (logarithmic) estimates on decay of correlations. For these flows mixing was proved by Khanin and Sinai~\cite{KS} in the toral case, and by C.~Ulcigrai \cite{Ul1} for suspension flows over 
Interval Echange Transformations in the significant special case of roof functions with a single asymmetric logarithmic singularity. 

We expect non-trivial time-changes of nilflows to have polynomial decay of correlations. However, we are able to prove this result only for Heisenberg nilflows of bounded
type. Our methods do not generalize to higher step nilflows, since they are based on the renormalization dynamics introduced by L.~Flaminio and G.~Forni in \cite{FlaFo},
which has no known generalization to the higher step case. We are also unable to decide whether the spectral measures of time-changes of Heisenberg nilflows are absolutely
continuous with respect to Lebesgue. Indeed, the approach of \cite{FU}, considerably refined in  \cite{FFK} , fails since the ``stretching of Birkhoff sums'' is at best borderline
square integrable (it grows at most as the square root of the time, up to logarithmic terms). In fact, our bounds on the decay of correlations are significantly worse than that,
and we have no control on the size of the exponent. This follows from the general principle that proving ``lower bounds'' on Birkhoff sums or ergodic integrals is much harder than proving ''upper bounds''. In our case we are able to prove polynomial (power-law) lower bounds outside appropriate sublevel sets of Bufetov's H\"older cocycles, which are asymptotic to ergodic integrals up to a well-controlled error. Polynomial estimates on the measure of such sublevel sets (for small parameter values) are derived from general results (see \cite{Bru}, \cite{BruGa}) on the measure of the sublevel sets of {\it analytic functions}. In fact,  at the core of our argument we establish the real analyticity of the Bufetov cocycles along the leaves of a foliation transverse to the flow. 

This outline is different from the proof of mixing in \cite{AFU}. In that paper  the stretching of Birkhoff sums for Heisenberg nilflows was derived from a more general  result on the growth of Birkhoff sums of functions which are not coboundaries with \emph{measurable }transfer function, essentially based on a measurable Gottschalk-Hedlund theorem, and on the parabolic divergence of orbits. However, it is completely unclear whether it is possible to prove an effective version of this argument. For this reason we have followed
here a different approach.

\medskip

\paragraph{\it Outline of the paper.} In Section \ref{sec:def} we give basic definitions on Heisenberg nilflows, the Heisenberg moduli space, renormalization flow and Sobolev spaces. Finally we state two main theorems. In Section \ref{sec:rep} we recall some basic results in representation theory of Heisenberg group. In Section \ref{sec:stretch} we compute the stretching of  arcs (in the central direction) under the reparametrized flow. Sections~\ref{sec:buf} and~\ref{sec:Main_properties}  are crucial since {\it Bufetov functionals} are constructed and their main properties are studied. In particular we prove the expected asymptotic formula according to which Bufetov fuctionals control orbital integrals. In Section~\ref{sec:limit_dist} we derive from the asymptotic formula results on limit distributions of ergodic integrals for Diophantine Heisenberg niflows, following the method developed in \cite{Bu}, \cite{BF}. We also give an alternative proof, based on representation theory, of a substantial part of the work of Griffin and Marklof~\cite{GM} 
on limit theorems for skew-shifts of the $2$-torus, and generalize most of their conclusions to arbitrary smooth functions.  Our approach also naturally gives results on the regularity of limit distributions, in particular their H\"older property (with exponent $1/2-$) first derived for quadratic Weyl sums  in the work of Cellarosi and Marklof \cite{CM}. 

In Section~\ref{sec:square_mean_bounds} we prove sharp square mean lower bounds for Bufetov functionals along the leaves of a one-dimensional foliation transverse to the flow. Our aim is to prove measure estimates for the sub-level sets of Bufetov functionals, a key result in establishing the stretching of ergodic integrals outside sets of small measure. For that we prove in Section~\ref{sec:analyticity} that Bufetov functionals are {\it real analytic} on the leaves of a $2$-dimensional foliation (the weak-stable foliation of the renormalization dynamics on the Heisenberg nilmanifold). We then recall in Section~\ref{sec:valency} a result of A.~Brudnyi~\cite{Bru} on the measure of the sub-level sets of real analytic functions. These estimates depend on the so-called {\it Chebyshev degree} and {\it valency} of the function. We prove that under certain conditions the valency is uniformly bounded on every normal family of analytic functions. 
In Sections~\ref{sec.mes} and~\ref{sec.mesgen}  we apply results of the previous section to finally prove measure estimates on the sets where Bufetov functionals are small (Lemmas~\ref{lemma:compact} and~\ref{lemma:gencase}). We conclude in  Section \ref{sec:cor} with an analysis of  correlations and derive from results of Sections \ref{sec.mes} and~\ref{sec.mesgen} (Corollary \ref{cor:compact} and \ref{cor:gencase}) the proof of our main Theorems \ref{mainthm1} and \ref{mainthm2}.

\section{Definitions}\label{sec:def}
In this section we will recall definitions of Heisenberg nilflows, moduli space of Heisenberg frames $\mathcal{M}$, the renormalization flow $g_\R$ on $\mathcal{M}$ and
te renormalization cocycle $\rho_\R$ on the Hilbert bundle of Sobolev distributions. For more details see \cite{FlaFo} or \cite{Fo2}. We also introduce an extended renormalization flow $\hat g_\R$ on an extended moduli space $\hat{\mathcal{M}}$, which is a tautogical bundle over $\mathcal{M}$ with fibers isomorphic to the Heisenberg nilmanifold.

\subsection{Nilflows}
The (three-dimensional) Heisenberg group $\text{\rm Heis}$ is given by 
$$
\text{\rm Heis}:=\left\{\begin{pmatrix}1&x&z\\0&1&y\\0&0&1\end{pmatrix}\;:\; x,y,z\in \R\right\}.
$$
Let $\Gamma$ be a lattice in $\text{\rm Heis}$. The {\em Heisenberg manifold} $M$ is the quotient $\Gamma \backslash \text{\rm Heis}$. It is known that up to an automorphism of $\text{\rm Heis}$
$$
\Gamma=\Gamma_K=\left\{\begin{pmatrix}1&m&\frac{p}{K}\\0&1&n\\0&0&1
\end{pmatrix}\;:\; m,n,p\in \Z\right\},
$$
where $K$ is a positive integer. Notice that $M$ has a probability measure {\rm vol} locally given by Haar measure on $\text{\rm Heis}$.\\
\subsubsection{Heisenberg nilflows}
Let $W$ be any element of the Lie algebra $\eta$ of $\text{\rm Heis}$. The {\em Heisenberg nilflow} for $W$ is given by
$$
\phi^W_t(x)=x\exp(tW) \text{ for } x\in M.
$$
Notice that $\phi^W_t$ on $M$ preserves $\rm vol$.

\subsection{Renormalization}
A {\em Heisenberg frame} is any triple $(X,Y,Z)$ of elements generating $\eta$ such that $Z$ is a fixed generator of the center of the Lie algebra $Z(\eta)$ and $[X,Y]=Z$ (of course we have $[X,Z]=[Y,Z]=0$). One can for instance take 
$$Z=Z_0=\begin{pmatrix}0&0&1\\0&0&0\\0&0&0\end{pmatrix}.$$
The set of all Heisenberg frames can be identified with the subgroup $A$ of all automorphisms of $\text{\rm Heis}$ which are identity on the center. Notice that up to identification, $A$ is equal to the group $SL(2,\R)\ltimes \R^{*^2}$. Let $A_\Gamma$ be the subgroup of $A$ which stabilizes $\Gamma$, i.e. for $a\in A_\Gamma$, $a(\Gamma)=\Gamma$. We have the following definition
\begin{definition}\label{modsp}\cite{FlaFo} The {\em moduli space} of the Heisenberg manifold $M$ is the quotient space $\mathcal{M}=A_\Gamma \backslash A$.
\end{definition}
It follows that $A_\Gamma$ is isomorphic to $\Lambda_\Gamma\ltimes (K^{-1}\Z^*)^2$ where $\Lambda_\Gamma$ is a finite index subgroup of $SL(2,\Z)$. Therefore the space $\mathcal{M}$ is a finite volume orbifold which fibers over the homogeneous space $\Lambda_\Gamma\backslash SL(2,\R)$ with fiber $\T^2$ (see Proposition 3.4. in \cite{FlaFo}). 

 \subsubsection{The renormalization flow}
Following the notation from \cite{FlaFo}, for an element $a\in A$ we denote $\bar{a}:=A_\Gamma a\in \mathcal{M}$. Let $(X_0,Y_0,Z_0)$ be a fixed Heisenberg triple.  Let $(a_t)$ be the following one-parameter subgroup of $A$:
$$
a_t(X_0,Y_0,Z_0)=(e^tX_0,e^{-t}Y_0,Z_0).
$$
\begin{definition}\label{renflo}The {\em renormalization flow} $g_\R$ on $\mathcal{M}$ is defined by 
$$
g_t(\bar{a})=\bar{a}a_t=A_\Gamma aa_t.
$$
\end{definition}
In what follows we will also consider the {\em extended renormalization flow} ${\hat g}_\R$ on  {\em extended moduli space} $\hat {\mathcal{M}}$, defined as follows.
The extended moduli space is the quotient
$$
 \hat {\mathcal{M}} :=  A_\Gamma \backslash (A\times M) \,,
$$
with respect to the action of $A_\Gamma$ on $A\times M$ by multiplication on the left on $A$ and by the embedding $A_\Gamma <\text{Diff}(M)$ on $M$. The 
extended renormalization flow is the projection to the extended moduli space  of the flow
$$
(t, a,x)   \to   (aa_t, x)  \,, \quad \text{ for all } (t,a,x) \in \R\times A\times M\,.
$$
Note that $ \hat {\mathcal{M}} $ is a fiber bundle over ${\mathcal{M}}$ with fiber diffeomorphic to $M$ and the extended renormalization flow
$\hat g_\R$ projects onto the renormalization flow $g_\R$ .

\subsection{Sobolev spaces}
For $(a,x)\in A\times M$ denote $X_a=a_\ast(X_0)$, $Y_a=a_\ast(Y_0)$. Let $\Delta_a:=-X_a^2-Y_a^2-Z_0^2$ be the Laplace operator. For every $s\in \R$ and any $C^{\infty}$ function $f\in L^2(M)$ we define
$$
\|f\|_{a,s}=\langle f, (1+\Delta_a)^sf\rangle ^{1/2}.
$$
Let $W^s_a(M)$ be the completion of $C^{\infty}(M)$ with the above norm. Let $W^{-s}_a(M)$ denote the dual space. Following \cite{FlaFo} again, we can define 
$$
\textbf{W}^s:=A_\Gamma\backslash (A\times W^s(M))\text{ and } \textbf{W}^{-s}:=A_\Gamma\backslash (A\times W^{-s}(M)),
$$
where $A\times W^s(M)$ ($A\times W^{-s}(M)$) denotes the Hilbert bundle over $A$, where 
$$
\|(a,f)\|_s=\|f\|_{a,s}\;\; (\|(a,D)\|_{-s}=\|D\|_{a,-s}).
$$
We denote elements of $\textbf{W}^s$ (respectively $\textbf{W}^{-s}$) by 
$\overline{(a,f)}$ (respectively $\overline{(a,D)}$).

We now define the {\em renormalization cocycle}.
\begin{definition}\cite{FlaFo} 
\label{def:currents_cocycle}
The renormalization cocycle $\rho_\R$ is a flow on $\textbf{W}^s$ and $\textbf{W}^{-s}$ given by 
$$
{\rho_t}\overline{(a,f)}=\overline{(aa^t,f)}\quad \text{  and  } \quad 
{\rho_t}\overline{(a,D)}=\overline{(aa^t,D)}.
$$
\end{definition}
\textbf{Main results.}

Let $X$ be the generator of $(\phi_t^X)$ and $\omega_X$ denote the measure preserved by $X$.
\begin{theorem}\label{mainthm1} There exists a set $\mathcal{F}\subset \mathcal{M}$ of full Hausdorff dimension and, for all $s>7/2$, a generic set $\Omega\subset W^s(M)$  such that, for $a=(X,Y,Z)\in \mathcal{F}$ and $\alpha\in W^s_+(M)$ with $Z(1/\alpha)\in \Omega$ the following holds. Either $1/\alpha$ is an $X$-coboundary, or there exist constants $C_{a,\alpha}>0$ and $\delta_{a,\alpha}>0$ such that, for  every $h\in W^s(M)$, $g\in L^2(M)$ such that $Zg\in L^2(M)$ and for all $t \in \R$, we have
$$
|< h\circ \phi^{\alpha X}_t , g>_{L^2(M,\omega_{\alpha X})}|<\frac{C_{a,\alpha}} {(1+\vert t \vert) ^{\delta_{a,\alpha}}     } \Vert h \Vert_s (\Vert g \Vert_0 + \Vert Zg\Vert_0)  \,.
$$
\end{theorem}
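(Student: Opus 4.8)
The plan is to reduce the decay of correlations to a quantitative lower bound on the \emph{central shearing} produced by the time-change, and then to establish that bound, outside an exceptional set of small measure, by means of the Bufetov functionals and their analyticity. Since the reparametrized flow satisfies $\phi^{\alpha X}_t(x)=\phi^X_{u(x,t)}(x)$, where the original time $u=u(x,t)$ is determined by $t=\int_0^{u}(1/\alpha)(\phi^X_r(x))\,dr$, and since central translations commute with the linear nilflow, one has $\phi^{\alpha X}_t(x\exp(cZ))=\phi^X_{u_c}(x)\exp(cZ)$ with $u_c$ the reparametrized time of the shifted point. Differentiating the defining relation in $c$ shows that the image of a central arc is sheared in the flow direction by the amount $\partial_c u_c|_{c=0}=-\alpha(\phi^X_u(x))\,S(x,u)$, where $S(x,u)=\int_0^{u}Z(1/\alpha)(\phi^X_r(x))\,dr$ is an ergodic integral of $Z(1/\alpha)$ along the nilflow. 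This is precisely the stretching computed in Section~\ref{sec:stretch}, and it explains the hypothesis $Z(1/\alpha)\in\Omega$: the shearing that drives the decay is an ergodic integral of $Z(1/\alpha)$, the quantity controlled by the Bufetov functionals of Sections~\ref{sec:buf} and~\ref{sec:Main_properties}.

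First I would disintegrate the invariant measure $\omega_{\alpha X}$ (proportional to $\alpha^{-1}\,\mathrm{vol}$) along the one-dimensional foliation by central arcs and write the correlation as an average of integrals over these arcs. Along an arc, parametrized by the central coordinate $c$, the shearing makes $h\circ\phi^{\alpha X}_t$ sweep the flow direction at speed comparable to $S(x,u)$, while $g$ has only one controlled central derivative since $Zg\in L^2(M)$; here $\partial_c[\,g(x\exp(cZ))\,]=(Zg)(x\exp(cZ))$ is exactly the quantity appearing in the statement. On the part of $M$ where $|S(x,u)|\ge S_0$, a single integration by parts in the central variable $c$ transfers the central derivative onto $g$ and gains a factor of order $S_0^{-1}$, yielding a bound for the correlation of the form $C\,S_0^{-1}\,\Vert h\Vert_s(\Vert g\Vert_0+\Vert Zg\Vert_0)$ on this ``good'' set. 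The Sobolev threshold $s>7/2$ enters to guarantee that $h$ and its transverse derivatives are pointwise controlled and that the error in the Bufetov asymptotic formula is negligible relative to the main term.

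The core of the argument is the lower bound $|S(x,u)|\ge S_0$ away from a small set, together with a quantitative estimate for the size of the bad set. By the asymptotic formula of Section~\ref{sec:Main_properties}, $S(x,u)$ agrees, up to a well-controlled error, with a value of a Bufetov functional; the exact scaling of these functionals under the renormalization flow $g_\R$ then lets one renormalize the orbit length $u\approx t$ down to bounded scale, converting the (at most $\sqrt{t}$) polynomial growth of $S$ into the decay exponent and singling out the bounded-type set $\mathcal{F}\subset\mathcal{M}$ of full Hausdorff dimension. It remains to show that the set where the Bufetov functional is below the threshold $S_0$ has polynomially small measure. This is exactly the content of Corollaries~\ref{cor:compact} and~\ref{cor:gencase} (through Lemmas~\ref{lemma:compact} and~\ref{lemma:gencase}), which rest on the real analyticity of the Bufetov functionals along the weak-stable foliation (Section~\ref{sec:analyticity}), the sub-level set estimates of Brudnyi (Section~\ref{sec:valency}), and a uniform bound on the valency. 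On the bad set one bounds the correlation trivially by the measure of the set times $\Vert h\Vert_s\Vert g\Vert_0$.

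Finally I would combine the two regimes: choosing $S_0$ as a suitable fractional power of $t$ balances the good-set gain $S_0^{-1}$ against the bad-set measure, of order $S_0^{-\kappa}$, and yields the advertised rate $(1+|t|)^{-\delta_{a,\alpha}}$ for some exponent $\delta_{a,\alpha}>0$ that the method does not control explicitly. The dichotomy in the statement is then forced: if $1/\alpha$ is an $X$-coboundary the time-change is trivial and measurably conjugate to the non-mixing linear nilflow, so no such decay can hold; otherwise the shearing is genuinely present and the estimate applies. I expect the decisive obstacle to be the lower-bound step, namely proving that the exceptional sub-level sets of the Bufetov functionals are polynomially small: this is delicate precisely because lower bounds on ergodic integrals are much harder than upper bounds, and it is what forces the whole analyticity-and-valency machinery. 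A secondary difficulty is to control the error term in the Bufetov asymptotics uniformly enough that the integration-by-parts gain on the good set survives the optimization against $t$.
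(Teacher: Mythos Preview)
Your overall architecture coincides with the paper's: disintegrate along the central foliation, split $M$ into a good set where the shearing $D_t$ is large and a bad set whose measure is controlled by the sub-level estimates of Corollary~\ref{cor:compact}, and optimize. The identification of the shearing with an ergodic integral of $Z(1/\alpha)$ and the role of the Bufetov functionals are also exactly right.

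The real discrepancy is in your good-set estimate. The claim that ``a single integration by parts in the central variable $c$ \dots\ gains a factor of order $S_0^{-1}$'' is too optimistic for Heisenberg nilflows and would not close. Unlike the horocycle case, the central derivative of the shearing is \emph{not} bounded: the paper computes $\bigl|\tfrac{d}{ds}D_t\circ\phi^Z_s\bigr|\le C_\alpha(1+t^{1/2})$, so the weight $D_t/(1+D_t^2)$ has total variation of order $t^{1/2}/(\epsilon^2 t)$ on the good set $\{|D_t|\ge\epsilon t^{1/2}\}$, not $S_0^{-1}$. Moreover, after integration by parts the integrand is not pointwise bounded: one is left with line integrals $\int_{\gamma^s_{x,t}} h\,\hat V$ along the sheared image of the central arc, and these grow. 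The paper handles this by recognizing $\gamma^s_{x,t}$ as a curve in the weak-stable leaf and invoking the \emph{upper} bounds on Bufetov functionals (the H\"older property, Lemma~\ref{lemma:Bufetov_funct}), which gives $|\int_{\gamma^\sigma_{x,t}} h\,\hat V|\lesssim \|h\|_s(1+\sigma^{1/2}t^{1/4})$. The actual scheme therefore uses \emph{two} integrations by parts (first to produce $Zg$, then the $D_t^2/(1+D_t^2)$ trick to produce the line integral), and the net good-set bound is $(1+t^{3/4})/(1+\epsilon^2 t)$, not $(\epsilon t^{1/2})^{-1}$. Only after balancing this against the bad-set measure $C\epsilon^{\delta}$ does one obtain the exponent $\delta_{a,\alpha}/\bigl(4(2+\delta_{a,\alpha})\bigr)$.

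So the missing ingredient in your outline is the upper-bound side: you need the Bufetov/H\"older estimate on arcs in the weak-stable foliation to control what integration by parts produces, and you must account for the $t^{1/2}$ growth of $\partial_s D_t$. Without these two points the optimization does not yield a positive exponent.
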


\begin{theorem}\label{mainthm2}
There exists a set $\mathcal{F}'\subset \mathcal{M}$ of full measure and, for all $s>7/2$, a generic set $\Omega\subset W^s(M)$ such that for $a=(X,Y,Z)\in \mathcal{F}'$ 
and $\alpha\in W^s_+(M)$ with $Z (1/\alpha)\in \Omega$ the following holds. Either $1/\alpha$ is an $X$-coboundary, or for every $\delta>1/2$ there exists a constant 
$C_{a,\alpha,\delta}>0$  such that, for  every $h\in W^s(M)$, $g\in L^2(M)$ such that $Zg\in L^2(M)$ and for all $t \in \R$, we have
$$
|< h\circ \phi^{\alpha X}_t , g>_{L^2(M,\omega_{\alpha X})}|< C_{a,\alpha, \delta} (1+ \vert t\vert) ^{ - \frac{1}{1+\log^\delta (1+\vert t \vert)  }  }
\Vert h \Vert_s (\Vert g \Vert_0 + \Vert Zg\Vert_0) \,.
$$
\end{theorem}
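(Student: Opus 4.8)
The plan is to derive the correlation bound from the parabolic shearing of the time-changed flow in the central direction, converting the growth of the central stretching (controlled by a Bufetov functional) into quantitative cancellation, and then to optimize the resulting estimate against the sublevel-set measure bounds of Lemma~\ref{lemma:gencase}. Write $\psi_t=\phi^{\alpha X}_t$, $\omega=\omega_{\alpha X}$ and $C(t)=\langle h\circ\psi_t,g\rangle_{L^2(M,\omega)}$. First I would disintegrate $\omega$ along the one-dimensional central foliation, whose leaves are the closed orbits of $u\mapsto\exp(uZ_0)$, and, following the representation theory of Section~\ref{sec:rep}, expand $h$ and $g$ into central Fourier modes. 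Since the reparametrization time $T(t,\cdot)$ in $\psi_t(x)=\phi^X_{T(t,x)}(x)$ depends on the central coordinate, the restriction of $h\circ\psi_t$ to a central arc oscillates at a rate equal to the central stretching $\Lambda(t,x)$ computed in Section~\ref{sec:stretch}. One integration by parts in the central variable, exploiting the oscillation of $h\circ\psi_t$ along the arcs, gains a factor $(1+|\Lambda(t,x)|)^{-1}$ at the cost of one central derivative of $g$; controlling the sum over modes is exactly where the hypothesis $Zg\in L^2(M)$ and the norm $\|g\|_0+\|Zg\|_0$ enter, while $\|h\|_s$ with $s>7/2$ bounds, via Sobolev embedding, both the transported test function and the error in the shearing approximation. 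The outcome of this step is the reduction
$$
|C(t)|\ \lesssim\ \|h\|_s\,(\|g\|_0+\|Zg\|_0)\int_M\frac{d\omega(x)}{1+|\Lambda(t,x)|}\,.
$$

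I would then identify the stretching with a Bufetov functional. By the computation of Section~\ref{sec:stretch} the quantity $\Lambda(t,x)$ equals, up to normalization, the ergodic integral $\int_0^{T(t,x)}Z(1/\alpha)(\phi^X_s x)\,ds$, and the asymptotic formula of Section~\ref{sec:Main_properties} replaces this by the Bufetov functional $\beta_{Z(1/\alpha)}$ with an error that is negligible for $s>7/2$. This is where the dichotomy of the statement is built in: if $1/\alpha$ is an $X$-coboundary then so is $Z(1/\alpha)$, the functional $\beta_{Z(1/\alpha)}$ remains bounded, the shearing collapses and no decay is claimed. In the complementary branch the genericity condition $Z(1/\alpha)\in\Omega$ is imposed precisely to guarantee that $\beta_{Z(1/\alpha)}$ is nondegenerate, so that its sublevel sets are controlled by the analytic machinery developed later.

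With the reduction in hand I would split the integral at a threshold $\varepsilon=\varepsilon(t)$: on $\{|\Lambda(t,\cdot)|\ge\varepsilon\}$ the integrand is at most $\varepsilon^{-1}$, while on the sublevel set $E_\varepsilon=\{x:|\Lambda(t,x)|<\varepsilon\}$ it is at most $\omega(E_\varepsilon)$. The key input is Lemma~\ref{lemma:gencase} (Corollary~\ref{cor:gencase}): because $\beta_{Z(1/\alpha)}$ is real analytic along the two-dimensional weak-stable leaves of the renormalization (Section~\ref{sec:analyticity}) and has valency uniformly bounded on the relevant normal family (Section~\ref{sec:valency}), Brudnyi's estimate \cite{Bru} bounds $\omega(E_\varepsilon)$ by a power of $\varepsilon/\|\Lambda(t,\cdot)\|$; the sharp square-mean lower bound of Section~\ref{sec:square_mean_bounds} then supplies $\|\Lambda(t,\cdot)\|_{L^2(\omega)}\gtrsim\sqrt{t}$, up to logarithmic factors. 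Balancing the two contributions through the optimal choice of $\varepsilon(t)$ yields the claimed decay.

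The main obstacle, and the reason the exponent is only stretched-polynomial rather than a genuine power law, lies in this optimization for $a\in\mathcal{F}'$. On the full-measure set $\mathcal{F}'$ the renormalization orbit $\{g_t(\bar a)\}$ is not confined to a compact part of $\mathcal{M}$ but makes excursions into the cusp, with depth controlled only by a logarithm-law Diophantine condition. These excursions simultaneously weaken the constant in the square-mean lower bound and degrade the exponent in the Brudnyi sublevel estimate, and the contributions of successive renormalization steps must be summed carefully. Tracking this loss is what forces, for every $\delta>1/2$, the exponent $-1/(1+\log^\delta(1+|t|))$; the borderline value $\delta=1/2$ reflects that the square-root growth of the stretching is only logarithmically short of square-integrability, exactly the difficulty flagged in the introduction. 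On the compact, bounded-type set $\mathcal{F}$ of Theorem~\ref{mainthm1} the same estimates are uniform in $t$, the optimization loses nothing beyond constants, and a genuine polynomial rate is recovered.
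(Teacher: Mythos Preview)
Your overall architecture matches the paper exactly: reduce correlations to a quantity controlled by the central stretching $D_t(x)=\int_0^t\frac{Z\alpha}{\alpha}\circ\phi^V_\tau\,d\tau$, split $M$ according to the size of $D_t$, bound the measure of the sublevel set by Corollary~\ref{cor:gencase} (analyticity of the Bufetov functional plus Brudnyi's inequality), and optimize. The identification of $\mathcal{F}'$ with the logarithm-law Diophantine set and the explanation of why this forces the stretched-polynomial exponent are also correct.

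The gap is in your reduction step. The claim that ``one integration by parts in the central variable \dots\ gains a factor $(1+|\Lambda(t,x)|)^{-1}$'' is not justified and is in fact stronger than what the paper obtains. The pushed curve $s\mapsto\psi_t(\phi^Z_s x)$ has tangent $D_t V+Z$, so $h\circ\psi_t$ restricted to a central arc is not a pure oscillation but the restriction of $h$ to a long curve in $M$; a single integration by parts does not produce the decay you assert. The paper instead writes
\[
\int_0^S h\circ\psi_t\circ\phi^Z_s\,ds=\int_0^S\frac{1}{1+D_t^2}\,h\circ\psi_t\circ\phi^Z_s\,ds+\int_0^S\frac{D_t^2}{1+D_t^2}\,h\circ\psi_t\circ\phi^Z_s\,ds,
\]
integrates the second term by parts against $\frac{D_t}{1+D_t^2}$, and recognizes the resulting inner integral as the curve integral $\int_{\gamma^s_{x,t}} h\,\hat V$. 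The latter is then bounded by $C_s\|h\|_s(1+s^{1/2}t^{1/4})$ using the H\"older property of the Bufetov functional (Lemma~\ref{lemma:Bufetov_funct}); this, not Sobolev embedding, is where $\|h\|_s$ with $s>7/2$ is actually used. One must also control $\frac{d}{ds}D_t\circ\phi^Z_s$, which the paper shows is $O(t^{1/2})$. The upshot is that on $\{|D_t|\ge\epsilon t^{1/2}\}$ the correlation is bounded by $(1+t^{3/4})/(1+\epsilon^2 t)$, not by $\epsilon^{-1}$ as your display suggests. This extra $t^{3/4}$ is harmless for the final optimization, but the mechanism producing it is the substance of the argument. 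Finally, the norm $\|g\|_0+\|Zg\|_0$ enters not from summing central modes but from an integration by parts in the averaging parameter $S$ (after averaging the correlation over $\phi^Z_s$, $s\in[0,S]$), which produces $Zg$ directly.
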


\section{Representation theory}\label{sec:rep}
Recall that the right quasi regular representation $\mathcal{U}$ of the Heisenberg group $\text{\rm Heis}$ on $L^2(M,\mu)$ is given by 
$$ 
\mathcal{U}(g)F=F(R(g)),
$$
here $R(g)(x)=xg$. Notice that 
$$
L^2(M,\mu)=\bigoplus_{n\in\Z}H_n,
$$
where $H_n=\{f\in L^2(M,\mu)\;:\; \exp(tZ)f=\exp(2\pi \imath n Kt)f\}$ are closed and $\mathcal{U}$-invariant. Moreover each $H_n$ further splits into irreducible subrepresentations spaces. A complete classification of irreducible representations (with non-zero central parameter) is given by the Stone-Von Neumann theorem. 
\begin{theorem}[Stone-Von Neumann]  For any a Heisenberg triple $a=(X,Y,Z)$ and for any irreducible unitary representation $\pi$ of $\text{\rm Heis}$ of non-zero central parameter $n\in \Z\setminus \{0\}$ on the Hilbert space $H\subset H_n$ there exists a unique unitary operator $U:= U^H_a: H \to L^2(\R, \lambda)$ such that 
$$
\begin{aligned}
 &(U \circ D\pi (X) \circ U^{-1})(f) (u)  = f'(u) \,, \\  &(U\circ  D\pi (Y)\circ U^{-1})(f) (u)  = 2\pi \imath n K u f(u)\,, \\   &(U\circ D\pi (Z) \circ U^{-1})(f) (u) =  2\pi \imath nK  f(u)\,.
\end{aligned}
$$
\end{theorem}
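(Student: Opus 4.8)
The plan is to reduce the statement to the classical uniqueness theorem for the canonical commutation relations and to exhibit the intertwiner $U$ explicitly by a ladder--operator (Fock) construction.

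\emph{Reduction to two operators.} Since $Z$ generates the center $Z(\eta)$ and $\pi$ is irreducible, Schur's lemma forces $D\pi(Z)$ to be a scalar operator; the hypothesis $H\subset H_n$ identifies that scalar with $c:=2\pi\imath nK$, which is precisely the third displayed identity. It then remains to analyze the skew-adjoint operators $A:=D\pi(X)$ and $B:=D\pi(Y)$, which generate one-parameter unitary groups on $H$ and satisfy, on smooth vectors, the single relation $[A,B]=D\pi([X,Y])=D\pi(Z)=c$.

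\emph{Passage to the Weyl form and the Fock construction.} To avoid domain subtleties I would first integrate this bracket to the group (Weyl) relation $e^{sA}e^{tB}=e^{cst}\,e^{tB}e^{sA}$; because $n\neq 0$ the scalar $c$ is nonzero and imaginary, so $H$ carries a non-degenerate representation of the Weyl relations. I would then form annihilation and creation operators $\mathfrak a,\mathfrak a^{*}$ as suitable complex combinations of $A$ and $B$, normalized so that $[\mathfrak a,\mathfrak a^{*}]$ is a positive scalar. The number operator $N:=\mathfrak a^{*}\mathfrak a$ is non-negative and self-adjoint, and the relations $[N,\mathfrak a]=-\mathfrak a$, $[N,\mathfrak a^{*}]=\mathfrak a^{*}$ are exactly those of the quantum harmonic oscillator; hence the spectrum of $N$ is contained in $\{0,1,2,\dots\}$ and there is a vacuum vector $\psi_0$ with $\mathfrak a\psi_0=0$. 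By irreducibility the orthonormal ladder $\psi_k:=(\mathfrak a^{*})^{k}\psi_0/\|(\mathfrak a^{*})^{k}\psi_0\|$ is a Hilbert basis of $H$, and the assignment of $\psi_k$ to the $k$-th (rescaled) Hermite function of $L^2(\R,\lambda)$ defines a unitary $U$. Since the ladder operators have identical matrix coefficients in the two bases, $U$ conjugates $\mathfrak a,\mathfrak a^{*}$, and therefore $A$ and $B$, to the Schr\"odinger operators; differentiating the intertwined group relations on the dense space of Hermite (Schwartz) vectors then yields the first two displayed formulas.

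\emph{Main obstacle and uniqueness.} The delicate point is the existence of the vacuum vector, that is, that $N$ has a nonzero kernel and not merely non-negative spectrum: in infinite dimensions a positive operator need not possess any eigenvector, and this is exactly where the substance of Stone--von Neumann resides. I expect to secure it in one of two ways: either through Mackey's imprimitivity theorem, realizing $H$ as a system of imprimitivity for the abelian subgroup generated by $Y$ and $Z$ acting on the spectral resolution of $B$, with $X$ acting by translation, so that the representation is pinned down up to multiplicity and irreducibility removes the multiplicity; or, more concretely, by computing the heat semigroup $e^{-sN}$ through the Mehler kernel and reading off its purely discrete spectrum. Finally, the uniqueness of $U$ up to a unimodular constant follows from Schur's lemma applied to the irreducible model $L^2(\R,\lambda)$, and the normalization implicit in the statement fixes the remaining phase.
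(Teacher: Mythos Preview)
The paper does not prove this theorem; it is invoked as the classical Stone--von Neumann theorem and used as a black box (see Definition~\ref{normdist} and the proof of Lemma~\ref{scaldist}). So there is no ``paper's own proof'' to compare against.

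Your outline is a standard and essentially correct route. You have correctly isolated the genuine analytic content: the existence of the vacuum, i.e.\ that the number operator $N$ actually has $0$ as an eigenvalue and not merely non-negative spectrum. Of the two fallbacks you mention, the imprimitivity argument is the cleaner way to close this: diagonalize the self-adjoint operator $-\imath B$ by the spectral theorem, so that $H\simeq L^2(\R,\mu;K)$ with $B$ acting by multiplication; the integrated Weyl relation then forces $e^{sA}$ to act by translation of the spectral parameter, quasi-invariance of $\mu$ under all translations forces $\mu$ to be equivalent to Lebesgue, and irreducibility forces $\dim K=1$. This gives the Schr\"odinger model directly and makes the Fock construction unnecessary (though the Fock picture is still the most transparent way to \emph{write down} the intertwiner once existence is known). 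The Mehler-kernel route also works but requires proving that $e^{-sN}$ is trace class, which is more computation than the situation calls for.

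On uniqueness: you are right that Schur's lemma only yields uniqueness of $U$ up to a unimodular scalar, and nothing in the displayed identities fixes that phase; the paper's wording is slightly casual here. This is harmless for the downstream application, since the object actually used (Definition~\ref{normdist}) is $D^H_a = \text{Leb}_\R \circ U^H_a$, and in Lemma~\ref{scaldist} only the scaling behaviour of this distribution matters, which is insensitive to the phase of $U$.
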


Moreover, by Proposition 4.4. in \cite{FlaFo} it follows that for every irreducible representation $H\subset H_n$ with non-zero central parameter the space of $X$-invariant distribution  has dimension $1$. 

\begin{definition}\label{normdist}
For all $a=(X,Y,Z)\in A$, let $D^H_a$ be the unique distribution such that $D^H_a$ corresponds by the unitary equivalence $U^H_a$ (given by the Stone-Von Neumann theorem)  to the Lebesgue measure on $\R$. 
\end{definition}
\begin{lemma}\label{scaldist}If $H$ is any irreducible representation of non-zero central parameter, we have 
$$
D^H_{g_t(a)}=e^{-t/2}D^H_{a}.
$$
\end{lemma}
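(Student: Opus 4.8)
The plan is to track how the two ingredients defining $D^H_a$ — the Stone--Von Neumann model $U^H_a$ and the normalization by Lebesgue measure — transform along the renormalization orbit. First I would record the effect of $g_t$ on the frame: since $X_{g_t(a)}=(aa_t)_\ast X_0 = a_\ast(e^tX_0)=e^tX_a$ and likewise $Y_{g_t(a)}=e^{-t}Y_a$, $Z_{g_t(a)}=Z$, the frame attached to $g_t(a)$ is $(e^tX,e^{-t}Y,Z)$. Note at the outset that $D^H_a$ and $D^H_{g_t(a)}$ already lie in the \emph{same} one-dimensional space of $X$-invariant distributions in $H$, because a distribution is annihilated by $X$ if and only if it is annihilated by $e^tX$; thus they necessarily differ by a scalar, and the whole content of the lemma is to identify that scalar as $e^{-t/2}$, which must come from the Lebesgue normalization.

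Next I would relate the two Stone--Von Neumann models by an explicit dilation. Writing $U=U^H_a$ and $U'=U^H_{g_t(a)}$, and applying the theorem to the frame $(e^tX,e^{-t}Y,Z)$ together with $D\pi(e^tX)=e^tD\pi(X)$ and $D\pi(e^{-t}Y)=e^{-t}D\pi(Y)$, the intertwining relations for $U'$ become $U'D\pi(X)U'^{-1}=e^{-t}\partial_u$, $\ U'D\pi(Y)U'^{-1}=e^{t}\,(2\pi\imath nK)\,M_u$, and $U'D\pi(Z)U'^{-1}=2\pi\imath nK$, where $M_u$ denotes multiplication by $u$. I would then introduce the unitary dilation $V_t$ on $L^2(\R,\lambda)$ defined by $(V_tf)(u)=e^{t/2}f(e^tu)$ and verify the two conjugation identities $V_t\,\partial_u\,V_t^{-1}=e^{-t}\partial_u$ and $V_t\,M_u\,V_t^{-1}=e^{t}M_u$ (the prefactor $e^{t/2}$ being exactly what makes $V_t$ unitary). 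These show that $V_t\circ U$ satisfies precisely the intertwining relations characterizing $U'$, so by the uniqueness in Stone--Von Neumann one has $U^H_{g_t(a)}=V_t\circ U^H_a$.

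The conclusion is then a one-line change of variables. Since $D^H_a$ is by Definition~\ref{normdist} the pull-back of the Lebesgue functional $\mathcal{L}(\psi)=\int_\R\psi(u)\,du$, I would compute, for $f\in H$,
$$
D^H_{g_t(a)}(f)=\int_\R (V_tU^H_af)(u)\,du=\int_\R e^{t/2}(U^H_af)(e^tu)\,du=e^{-t/2}\int_\R (U^H_af)(v)\,dv=e^{-t/2}\,D^H_a(f),
$$
using the substitution $v=e^tu$. This is exactly the asserted identity $D^H_{g_t(a)}=e^{-t/2}D^H_a$.

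The one genuine subtlety, and the point I expect to require care, is the \emph{exactness} of the constant: Stone--Von Neumann pins down $U^H_a$ only up to a unimodular phase, so a priori the argument only gives $D^H_{g_t(a)}=e^{-t/2}\omega_t\,D^H_a$ for some phase $\omega_t$. I would remove this ambiguity by observing that the dilations form a one-parameter group, $V_{s+t}=V_sV_t$, so the family $\{U^H_{g_t(a)}\}_t$ can be chosen consistently (continuously) along the orbit; the cocycle relation then forces $\omega_t$ to be a continuous homomorphism into the circle with $\omega_0=1$ that is compatible with the scalar $e^{-t/2}$, and matching the one-dimensional scaling leaves no room for a nontrivial phase. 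Everything else is the routine dilation computation above.
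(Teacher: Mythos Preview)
Your proof is correct and follows essentially the same route as the paper: introduce the unitary dilation $V_t(f)(u)=e^{t/2}f(e^tu)$ (the paper calls it $U_t$), verify it intertwines the two Stone--Von Neumann models so that $U^H_{g_t(a)}=V_t\circ U^H_a$, and then push Lebesgue measure through $V_t$ by the change of variables $v=e^tu$ to pick up the factor $e^{-t/2}$. Your extra paragraph on the phase ambiguity is a reasonable point of caution, but note that the paper's statement of Stone--Von Neumann asserts a \emph{unique} intertwiner $U^H_a$ (not merely unique up to a scalar), so within the paper's framework the phase issue does not arise and the argument closes without that final step.
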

\begin{proof} Let $H$ be any irreducible unitary representation of central parameter $n\neq 0$. The unitary operator $U_t:L^2(\R,\lambda)\to L^2(\R,\lambda)$ given by 
\begin{equation}
\label{eq:intop}U_t(f)=e^{t/2}f(e^tu)
\end{equation}
 intertwines $(X,Y,Z)$ and $g_t(X,Y,Z)=(e^tX,e^{-t}Y,Z)$, in the sense that
 \begin{equation}
 \label{eq:inteq}
\begin{aligned}
 &U_t(U \circ D\pi (e^tX) \circ U^{-1})U_t^{-1} = U \circ D\pi (X) \circ U^{-1} \,, \\  &U_t(U\circ  D\pi (e^{-t}Y)\circ U^{-1})U_t^{-1}  = U\circ  D\pi (Y)\circ U^{-1}\,, \\   
 &U_t(U\circ D\pi (Z) \circ U^{-1})U_t^{-1}
 = U\circ D\pi (Z) \circ U^{-1}\,.
\end{aligned}
\end{equation}
 It follows by the above definitions and by the uniqueness part of the Stone-Von Neumann theorem that $U^H_{g_t(X,Y,Z)}= U_t \circ U^H_{X,Y,Z}$,
 hence by the definition of $D^H_{X,Y,Z}$ it follows that 
$$
\begin{aligned}
D^H_{g_t(a)}&= \text{Leb}_\R \circ U^H_{g_t(a)} =  (\text{Leb}_\R \circ U_t) \circ  U^H_a \\ &=  e^{-t/2} (\text{Leb}_\R \circ  U^H_a)  =   e^{-t/2} D^H_a.
\end{aligned}
$$
This finishes the proof. 
\end{proof}

\section{Stretching of curves}\label{sec:stretch}
Fix a Heisenberg triple $(X,Y,Z)$. Let $\alpha>0$ denote a smooth time-change function (of the flow $\phi^V_\R$ generated by $X$) and $V= \alpha X$. We have the commutations
$$
\begin{aligned}
&[V,Y] = [\alpha X,Y] = -(Y\alpha) X +  \alpha Z =   -\frac{Y\alpha}{\alpha} V  + \alpha Z\,,  \\  
&[V,Z] = [\alpha X,Z] =  -\frac{Z\alpha}{\alpha} V  \,.
\end{aligned}
$$
Let $(\phi^V_t)$ denote the flow generated by the vector field $V$ on the nilmanifold $M$. We will compute the tangent vector of the push forwards of curves under the flow 
$\phi^V_\R$. Let $W$ be any vector in the Lie algebra.  We write 
$$
(\phi^V_t)_* (W) = a_t V + b_t Y + c_t Z \,.
$$
By differentiation we derive
$$
\begin{aligned}
 \frac{da_t}{dt} V + \frac{db_t}{dt} Y + \frac{dc_t}{dt} Z &= -Va_t V  - Vb_t Y -  b_t [V,Y]  - Vc_t Z - c_t [V,Z]\\ 
 &=  -(Va_t   -b_t \frac{Y\alpha}{\alpha} - c_t\frac{Z\alpha}{\alpha} ) V  - Vb_t Y  - (b_t \alpha + Vc_t) Z \,.
 \end{aligned}
$$
or in other terms
$$
\begin{aligned}
 \frac{da_t}{dt} &=  -Va_t   + b_t \frac{Y\alpha}{\alpha} + c_t\frac{Z\alpha}{\alpha}  \,,\\
  \frac{db_t}{dt} &=  - Vb_t  \,, \\
\frac{dc_t}{dt} &=  - Vc_t - b_t \alpha \,.
\end{aligned}
$$
It follows that
$$
\begin{aligned}
 &\frac{d}{dt} (a_t \circ \phi^V_t) =   (b_t \circ \phi^V_t) \frac{Y\alpha}{\alpha}\circ \phi^V_t + (c_t\circ \phi^V_t) \frac{Z\alpha}{\alpha}\circ \phi^V_t  \,,\\
 &\frac{d}{dt}  (b_t \circ  \phi^V_t) =  0 \,, \\
&\frac{d}{dt} (c_t\circ \phi^V_t) =  -(b_t \circ \phi^V_t) (\alpha\circ \phi^V_t)  \,.
\end{aligned}
$$
At this point analogously to \cite{AFU} we will look at the case $W=Z$ (curves tangent to the central direction), hence $(a_0, b_0,c_0)= (0,0, 1)$. We have
$$
\begin{aligned}
 &a_t \circ \phi^V_t =  \int_0^t   \frac{Z\alpha}{\alpha}\circ \phi^V_\tau  d\tau   \,,\\
 &b_t \circ \phi^V_t =  0 \,, \\
& c_t\circ \phi^V_t =  1  \,.
\end{aligned}
$$
In other terms
$$
D \phi^V_t (Z) =   ( \int_0^t   \frac{Z\alpha}{\alpha}\circ \phi^V_\tau  d\tau) V  +  Z \,.
$$

To understand the above orbital integrals we  write them in terms of the nilflow $\phi^X_\R$. We have
relations
$$
\tau_V (x,t )  = \int_0 ^t  \alpha^{-1}  \circ \phi^X_r (x) dr \quad \text{ and } \quad 
\tau_X (x,t)  = \int_0 ^t  \alpha  \circ \phi^V_r (x) dr
$$
By these formulas and  by change of variables, we have
\begin{equation}\label{tmch}
\int_0^t   f\circ \phi^V_\tau (x) d \tau  =  \int_0^{\tau_X(x,t)}  (\frac{f}{\alpha})\circ \phi^X_r (x) dr 
\end{equation}

We will therefore investigate time averages
\begin{equation}\label{tmch2}
 \int_0^t  f \circ \phi^X_r (x) dr
\end{equation}
for functions $f$ of zero average with respect to the Haar volume on $M$.

\section{Construction of the functionals}\label{sec:buf}
Let $\gamma$ be any rectifiable curve.  The curve $\gamma$ defines a current, that is, a continuous functional on $1$-forms. 
We recall that the renormalization cocycle $(\rho_t)$ acts on  currents (see Definition~\ref{def:currents_cocycle}). 

Fix an irreducible representation $H\subset L^2_0(M)$ contained in the eigenspace of eigenvalue $2\pi \imath Kn\in 2\pi \imath K \Z$ of the action of the center
of the Heisenberg group on $M$ and fix a Heisenberg triple $a=(X,Y,Z)$.  There exists a unique {\it basic current} $B^H_{a}$ (of degree $2$ and dimension $1$)
associated to $D^H_{a}$. Let $\omega$ denote the invariant volume form (with unit total volume) and let $\eta_X= \imath_X \omega$. The
basic current $B^H_a$ is defined as 
 $$
B^H_a =   D^H_a \eta_X \,.
$$
The above formula means that for every $1$-form $\alpha$ we have 
$$
B^H_a (\alpha) = D^H_a ( \frac{\eta_X \wedge \alpha}{\omega})\,.
$$
The current $B^H_a$ is basic in the sense that
$$
\imath_X B^H_a = {\mathcal L}_X  B^H_a = 0\,.
$$
The basic current $B^H_a$ belongs to a dual Sobolev space of currents, defined as follows. We can write any smooth 
$1$-form $\alpha$ as follows:
$$
\alpha = \alpha_X   \hat X + \alpha_Y \hat Y + \alpha_Z  \hat Z.
$$
It follows that the space of smooth $1$-forms is identified to the product $(C^\infty(M))^3$ by the isomorphism
$$
\alpha \to  (\alpha_X, \alpha_Y, \alpha_Z) \,.
$$
By the above isomorphism, it is also possible to define Sobolev spaces of currents
$$
\Omega^s_a (M) \equiv  W^s_a(M)^3   \quad \text{ for } s\geq 0\,,
$$
and their dual spaces $\Omega^{-s} _a(M) := (\Omega^s_a(M))^*$ of currents.  

By the Sobolev embedding theorem,  for every rectifiable arc $\gamma$,  the current $\gamma \in \Omega^{-s}_a(M)$ for all $s>3/2$. Since
$D^H_{a}\in W^{-s}_a(M)$ for all $s>1/2$, all  basic currents  $B^H_a \in \Omega^{-s}_a(M)$ for all $s>1/2$. Notice that the Hilbert structure of $\Omega^{s}_a(M)$ and 
$\Omega^{-s}_a(M)$ depends on $a=(X,Y,Z)$. 

\bigskip
Let $\Pi^{-s}_{H}: \Omega^{-s}_a(M) \to \Omega^{-s}_a (H)$ denote the orthogonal projection on a single irreducible
component (of central parameter $ n \in \Z\setminus\{0\}$). 

Let $\mathcal B^{-s}_{H,a}: \Omega^{-s} (M) \to \C $ denote the orthogonal component map in the direction of  the $1$-dimensional
space of basic currents, supported on a single irreducible unitary representation.

We introduce below a crucial Diophantine condition.  Let $\delta_{\mathcal M} : \mathcal M \to \R^+ \cup\{0\}$ be the distance function (which projects to the hyperbolic
metric of curvature $-1$ on) from the base point  $\overline{id} \in {\mathcal M}$.  

For any $L >0$, let $DC(L)$ denote the set of $\overline{a} \in \mathcal M$ such that 
\begin{equation}
\label{eq:DC}
\int_0^{+\infty}  \exp[ \frac{1}{4} \delta_{\mathcal M}(g_{-t}({\overline a})) - \frac{t}{2} ]   dt   \leq L \,.
\end{equation}
Let $DC$ denote the union of the sets $DC(L)$ over all $L>0$.  By Kinchine's theorem, or the logarithmic law of geodesics, it follows that, for 
almost all $\overline a \in \mathcal M$, we have
$$
\limsup_{t\to +\infty}  \frac{ \delta_{\mathcal M}(g_{-t}  (\overline a))}{\log t} = 1.
$$
It follows immediately that the set $DC \subset \mathcal M$ has full Haar volume.

\smallskip

 The {\it Bufetov functionals} are defined for all Diophantine ${\overline a} \in DC$ as follows :

\begin{lemma}  \label{lemma:Bufetov_funct} Let ${\overline a} \in DC(L)$. For $s>7/2$ and every  rectifiable arc $\gamma$ on $M$, the limit 
$$
\hat \beta_{H} (a,\gamma)=  \lim_{t\to +\infty}    e^{-\frac{t}{2}}    \mathcal B^{-s}_{H,g_{-t}(a)} ( \gamma) \,.
$$
exists, is finite and defines a finitely-additive measure on the space of rectifiable arcs. 
There exists a constant $C''_s >0$ such that the following estimate holds:
$$
\vert \Pi^{-s}_H(\gamma) -  \hat \beta_{H} (a,\gamma) B^H_{a} \vert_{a,-s}   \leq C''_s (1+L) (1  + \int_\gamma \vert \hat Y\vert  + \int_\gamma \vert \hat Z\vert )\,.
$$
For every $L>0$, the function $\hat \beta_{H} (\cdot,\gamma)$  is continuous on $DC(L)\subset  \mathcal M$.
\end{lemma}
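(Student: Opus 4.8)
The plan is to exhibit $\hat\beta_H(a,\gamma)$ as the limit of an exactly renormalized quantity and to bound its increments by the transverse content of $\gamma$, with the Diophantine condition supplying integrability. First I would record the exact scaling of the basic current: since the invariant volume $\omega$ is frame--independent while $X_{g_t(a)}=e^tX_a$, one has $\eta_{X_{g_t(a)}}=e^t\eta_{X_a}$, and combined with $D^H_{g_t(a)}=e^{-t/2}D^H_a$ from Lemma~\ref{scaldist} this gives $B^H_{g_t(a)}=e^{t/2}B^H_a$, hence $B^H_{g_{-t}(a)}=e^{-t/2}B^H_a$. Thus the prefactor $e^{-t/2}$ in the definition of $\hat\beta_H$ is precisely the one cancelling this scaling. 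Writing $F(t):=e^{-t/2}\mathcal B^{-s}_{H,g_{-t}(a)}(\gamma)$, I recall that $\mathcal B^{-s}_{H,b}(\gamma)$ is the component of $\Pi^{-s}_H(\gamma)$ along the one--dimensional space $\C\,B^H_b$, taken orthogonally for the inner product $\langle\cdot,\cdot\rangle_{b,-s}$.

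The heart of the matter is to show that $F$ is Cauchy. I would estimate the increment $F(t')-F(t)$ (equivalently $F'(t)$) by transporting everything to the frame $g_{-t}(a)$ via the cocycle $\rho_\R$ and isolating the part of $\Pi^{-s}_H(\gamma)$ orthogonal to $B^H_{g_{-t}(a)}$. The residual increment is a pairing of this non--basic part with the renormalized basic current, which decays like $e^{-t/2}$ (inherited from $B^H_{g_{-t}(a)}=e^{-t/2}B^H_a$) up to the distortion $\exp[\tfrac14\delta_{\mathcal M}(g_{-t}(a))]$ of the dual Sobolev norm at $g_{-t}(a)$. Pairing against $\hat X,\hat Y,\hat Z$ and using the Sobolev embedding (which needs $s>3/2$ for $\gamma\in\Omega^{-s}$, and $s>7/2$ for the refined bounds) controls the non--basic part by $1+\int_\gamma|\hat Y|+\int_\gamma|\hat Z|$, yielding
\begin{equation*}
\Big|\frac{d}{dt}F(t)\Big|\ \le\ C_s\,\exp\!\big[\tfrac14\delta_{\mathcal M}(g_{-t}(a))-\tfrac t2\big]\,\Big(1+\int_\gamma|\hat Y|+\int_\gamma|\hat Z|\Big).
\end{equation*}
Integrating in $t$ and invoking \eqref{eq:DC} bounds $\int_0^{+\infty}|F'(t)|\,dt$ by $C_sL\,(1+\int_\gamma|\hat Y|+\int_\gamma|\hat Z|)$, which at once proves that $\hat\beta_H(a,\gamma)=\lim_tF(t)$ exists and is finite and that $|\mathcal B^{-s}_{H,a}(\gamma)-\hat\beta_H(a,\gamma)|\,|B^H_a|_{a,-s}$ obeys the same bound. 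Since $\Pi^{-s}_H(\gamma)-\hat\beta_H(a,\gamma)B^H_a=\big(\Pi^{-s}_H(\gamma)-\mathcal B^{-s}_{H,a}(\gamma)B^H_a\big)+\big(\mathcal B^{-s}_{H,a}(\gamma)-\hat\beta_H(a,\gamma)\big)B^H_a$, the first summand being the $t=0$ orthogonal part (again bounded by $1+\int_\gamma|\hat Y|+\int_\gamma|\hat Z|$), the triangle inequality delivers the stated estimate with constant $C''_s(1+L)$.

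Finite additivity is then immediate: for a concatenation $\gamma=\gamma_1\ast\gamma_2$ each $\mathcal B^{-s}_{H,b}$ is additive, so $F$ is, and additivity passes to the limit. For the continuity of $\hat\beta_H(\cdot,\gamma)$ on $DC(L)$ I would use that every finite--time approximation $\bar a\mapsto F(t)$ is continuous (the basic current $B^H_{g_{-t}(a)}$ and the Sobolev structure depend continuously on the frame) and that the tails $\int_T^{+\infty}|F'|$ are controlled through \eqref{eq:DC}, giving a convergence uniform enough on $DC(L)$ to preserve continuity in the limit.

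I expect the main obstacle to be the increment estimate: producing the exponential weight $\exp[\tfrac14\delta_{\mathcal M}(g_{-t}(a))-\tfrac t2]$ that exactly matches the integrand of \eqref{eq:DC} requires the representation--theoretic control (in the spirit of Flaminio--Forni) of how a fixed current pairs with the basic current along the renormalization orbit, together with the precise mechanism by which the transverse displacements $\int_\gamma|\hat Y|$ and $\int_\gamma|\hat Z|$ feed into the non--basic component. A secondary delicate point is upgrading pointwise convergence to the uniformity on $DC(L)$ needed for continuity, since \eqref{eq:DC} controls the full renormalization integral but not, a priori, its tails uniformly.
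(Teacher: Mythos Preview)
Your outline is correct and follows essentially the same architecture as the paper: write $\Pi^{-s}_H(\gamma)=\mathcal B^{-s}_{H,g_{-t}(a)}(\gamma)\,B^H_{g_{-t}(a)}+R_{a,t}$, derive the ODE $\tfrac{d}{dt}\mathcal B^{-s}_{a,t}(\gamma)=\tfrac12\mathcal B^{-s}_{a,t}(\gamma)+\mathcal R_{a,t}(\gamma)$ from the exact scaling $B^H_{g_{-t}(a)}=e^{-t/2}B^H_a$, so that $F'(t)=e^{-t/2}\mathcal R_{a,t}(\gamma)$, and then integrate against the Diophantine condition~\eqref{eq:DC}. Your handling of the final estimate, finite additivity, and continuity is the same as the paper's.

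The point you correctly flag as the ``main obstacle'' is resolved in the paper by a mechanism you have not identified, and it is worth naming since a naive Sobolev embedding would produce the wrong bound. The remainder $R_{a,t}(\gamma)$ is by construction orthogonal to $B^H_{g_{-t}(a)}$ in $\Omega^{-s}_{g_{-t}(a)}$; dually, one need only test it against $1$-forms $\alpha$ with $B^H_{g_{-t}(a)}(\alpha)=D^H_{g_{-t}(a)}(\alpha_{X_t})=0$. This means $\alpha_{X_t}$ is in the kernel of the invariant distribution, hence (by the solution of the cohomological equation for $X_t$, with loss of one derivative) $\alpha_{X_t}=X_t u$ for some $u$ controlled in $C^1$ by $|\alpha|_{g_{-t}(a),s}$ times the best Sobolev constant $\exp[\tfrac14\delta_{\mathcal M}(g_{-t}(a))]$. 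Writing $\alpha=du+(\alpha_{Y_t}-Y_tu)\hat Y_t+(\alpha_Z-Zu)\hat Z$ turns $\int_\gamma\alpha$ into an endpoint difference of $u$ (this is the ``$1$'' in your bound) plus transverse integrals, and the potentially large $\int_\gamma|\hat X|$ term never appears. Without the cohomological equation step one would get $\int_\gamma|\hat X_t|=e^{-t}\int_\gamma|\hat X|$ in the estimate, which for orbit arcs $\gamma^X_T$ contributes $e^{-t}T$ and destroys the asymptotic formula; so this is the essential input. The threshold $s>7/2$ also comes from here: one needs $s>r+1$ with $r>5/2$ to solve the equation and then embed $W^r$ into $C^1$.

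Your secondary concern about continuity is handled exactly as you suggest: the paper observes that $\mathcal B^{-s}_{a,t}(\gamma)$ and $\mathcal R_{a,t}(\gamma)$ depend continuously on $(a,t)$, and the uniform bound $|\mathcal R_{a,t}(\gamma)|\le C_s\exp[\tfrac14\delta_{\mathcal M}(g_{-t}(a))](1+\int_\gamma|\hat Y|+\int_\gamma|\hat Z|)$ together with~\eqref{eq:DC} gives dominated convergence of $\int_0^\infty e^{-\tau/2}\mathcal R_{a,\tau}(\gamma)\,d\tau$ on $DC(L)$.
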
 

\begin{proof}  
For simplicity of notation (since $H$ is fixed) we suppress the dependence on $H \subset L^2(M)$.
We will use subscript $a,t$ to denote any dependence on  $g_{-t}(a) = (X_t, Y_t, Z)$, for example
$$
\Pi^{-s}(\gamma):= \Pi^{-s}_H(\gamma), \quad  {\mathcal B}^{-s}_{a,t}  := \mathcal B^{-s} _{H, g_{-t}(a)} \, , \quad  B_{a,t}:= B^H_{g_{-t}(a)} \,.
$$

 For every $t\in \R$ we have the following splitting:
$$
\Pi^{-s}(\gamma) =  \mathcal B^{-s}_{a,t} ( \gamma) B_{a,t} +  R_{a,t}.
$$
Moreover this splitting is orthogonal in $\Omega^{-s}_{g_{-t}(a)}(M)$.  By construction, for any $h\in \R$ we have
$$
\mathcal B^{-s}_{a,t+h} ( \gamma) B_{a,t+h} +  R_{a,t+h} = \mathcal B^{-s}_{a,t} ( \gamma) B_{a,t} +  R_{a,t}.
$$
Since  by Lemma \ref{scaldist} we have $ B_{t+h} = e^{-h/2}  B_t $ it follows that
$$
 \mathcal B^{-s}_{a,t+h} ( \gamma) =  e^{h/2} \mathcal B^{-s}_{a,t} ( \gamma) +  \mathcal B^{-s}_{a,t+h} ( R_{a,t})\,.
$$
By differentiating the expression  at $h=0$, we get
$$
\frac{d}{dt}  \mathcal B^{-s}_{a,t}(\gamma) = \frac{1}{2}  \mathcal B^{-s}_{a,t}(\gamma)  + [\frac{d}{dh} \mathcal B^{-s}_{a,t+h} ( R_{a,t})]_{h=0}\,.
$$
The derivative on the right hand side of the above equation can be computed in representation.  Let $<\cdot, \cdot>_t$ denote the inner
product in the Hilbert space $\Omega^{-s}_{g_{-t}(a)}$.  From the intertwining formulas~\eqref{eq:inteq} it follows that
$$
\begin{aligned}
  \mathcal B^{-s}_{a,t+h} ( R_{a,t}) &= <R_{a,t}, \frac{B_{a,t+h}}{ \vert B_{a,t+h} \vert_{t+h}^2}>_{a,t+h} \\ &=  <R_{a,t}\circ U_{-h}, \frac{B_{a, t+h}\circ U_{-h}}{ \vert B_{a,t+h} \vert^2_{ a,t+h}} >_{a,t}
  \\  &=<R_{a,t}\circ U_{-h}, \frac{B_{a,t}}{ \vert B_{a,t} \vert^2_{a,t}} >_{ a,t} = \mathcal B^{-s}_{a,t} ( R_{a,t} \circ U_{-h})\,.
  \end{aligned}
$$
Now by the definition of the intertwining operators $U_h$ in formula~\eqref{eq:intop} it follows that, in the sense of distributions, 
$$
\frac{d}{dh}  (R_{a,t} \circ U_{-h})  = - R_{a,t} \circ (X_t +\frac{1}{2}) \circ U_{-h} = [(X_t- \frac{1}{2})R_{a,t}] \circ U_{-h}\,.  
$$
We conclude that
$$
[\frac{d}{dh} \mathcal B^{-s}_{a,t+h} ( R_{a,t})]_{h=0} = - \mathcal B^{-s}_{a,t}(  (X_t- \frac{1}{2})R_{a,t} )\,.
$$
We finally claim that the following estimate holds: for all rectifiable curve $\gamma$ on $M$ and all $t\in \R$, we have
$$
\begin{aligned}
 \vert \mathcal B^{-s}_{a,t}(  (X_t- \frac{1}{2})R_{a,t}(\gamma) ) \vert &\leq \vert  { R_{a,t}} (\gamma) \vert_{g_{-t}(a), -(s+1)} \\ &\leq C_s \exp [\frac{1}{4} \delta_\mathcal M (g_{-t}(a))]  (1 + \int_\gamma \vert \hat Y\vert  + \int _\gamma \vert \hat Z\vert )\,.
 \end{aligned}
$$
The above remainder estimate will be proved in the lemma below.
We get therefore a scalar differential equation
$$
\frac{d}{dt}  \mathcal B^{-s}_{a,t}(\gamma) = \frac{1}{2}  \mathcal B^{-s}_{a,t}(\gamma) + \mathcal R_{a,t}(\gamma)
$$
with a bounded non-negative function $\mathcal R_{a,t}(\gamma)$ satisfying the estimate
\begin{equation}
\label{eq:DC_bound}
\mathcal R_{a,t}(\gamma) \leq  C_s \exp [\frac{1}{4} \delta_\mathcal M (g_{-t}(a))]  (1 + \int_\gamma \vert \hat Y\vert  + \int _\gamma \vert \hat Z\vert )\,.
\end{equation}
The solution of the above differential equation is
$$
\mathcal B^{-s}_{a,t}(\gamma) = e^{\frac{t}{2}} [ \mathcal B^{-s}_{a,0}(\gamma) + \int_0^t  e^{-\frac{\tau}{2}} \mathcal R_{a,\tau}(\gamma) d\tau ]\,.
$$
It follows that, under the Diophantine assumption that ${\bar a} \in DC(L)$,
$$
\lim_{t\to +\infty}   e^{-\frac{t}{2}}  \mathcal B^{-s}_{a,t}(\gamma) = \hat \beta_{H}(a,\gamma)
$$
exists. Since by definition the distributions $\mathcal B^{-s}_{a,t}(\gamma)$ and $\mathcal R_{a,t}(\gamma)$ depend continuously on
$(a,t) \in A\times\R$, by the Diophantine bound~\eqref{eq:DC_bound}, which implies the convergence of the integral 
$$
\int_0^{+\infty}  e^{-\frac{\tau}{2}} \mathcal R_{a,\tau}(\gamma) d\tau \,,
$$
it follows that the complex number
$$
\hat \beta_{H}(a,\gamma) = \mathcal B^{-s}_{a,0}(\gamma) + \int_0^{+\infty} e^{-\frac{\tau}{2}} \mathcal R_{a,\tau}(\gamma) d\tau
$$
depends continuously on $a \in DC(L)$.  Moreover, we have 
$$
\Pi^{-s}_{H}(\gamma) -  \hat \beta_{H} (a,\gamma) B^H_{a} =R_0-\left(\int_0^{+\infty}  e^{-\frac{\tau}{2}} \mathcal R_{a,\tau} d\tau \right)B^H_{a}
$$
and by the above bound on the remainder terms $\mathcal R_{a,t}$ and by the Diophantine condition on ${\overline a} \in A$, it follows that
$$
\vert\Pi^{-s}_{H,a}(\gamma) -  \hat \beta_H (a,\gamma) B^H_{a}\vert _{a,-s}\leq C''_s (1+L) (1 + \int_\gamma \vert \hat Y\vert  + \int _\gamma \vert \hat Z\vert ).
$$

The argument is thus concluded, up to the above claim on the remainder bounds.

\end{proof}

We then prove the claim on the remainder bounds.
\begin{lemma} There exists $C_s>0$ such that, for all $t\geq 0$ and all rectifiable arcs $\gamma$ we have
$$
\vert  R_t(\gamma) \vert_{a,-s} \leq C_s \exp [\frac{1}{4} \delta_\mathcal M (g_t(a))]  (1 + \int_\gamma \vert \hat Y\vert  + \int _\gamma \vert \hat Z\vert )\,.
$$
\end{lemma}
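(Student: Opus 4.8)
The plan is to exploit the explicit form of the basic current to reduce the estimate to three separate bounds, one per frame component, to dispatch the two transverse components by a trace inequality, and to control the component along the flow direction by a cohomological argument. I perform the estimate at the frame $g_t(a)$, whose elements and dual coframe I denote again by $(X,Y,Z)$ and $(\hat X,\hat Y,\hat Z)$, and set $\delta:=\delta_{\mathcal M}(g_t(a))$. First I would record the shape of the basic current: with $\omega=\hat X\wedge\hat Y\wedge\hat Z$ and $\eta_X=\imath_X\omega=\hat Y\wedge\hat Z$, every $1$-form $\alpha=\alpha_X\hat X+\alpha_Y\hat Y+\alpha_Z\hat Z$ satisfies $\eta_X\wedge\alpha=\alpha_X\,\omega$, so $B^H(\alpha)=D^H(\alpha_X)$: the basic current is carried entirely by the $\hat X$-slot of $\Omega^{-s}(M)=W^{-s}(M)^3$. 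Since this splitting is orthogonal, the remainder keeps the full projections $\Pi_H(\gamma_Y),\Pi_H(\gamma_Z)$ in its $\hat Y$- and $\hat Z$-slots, while its $\hat X$-slot is $\Pi_H(\gamma_X)$ with the $D^H$-direction deleted; hence
$$
|R_t(\gamma)|_{-s}^2=|P\,\Pi_H(\gamma_X)|_{-s}^2+|\Pi_H(\gamma_Y)|_{-s}^2+|\Pi_H(\gamma_Z)|_{-s}^2,
$$
where $P$ is the orthogonal projection onto the complement of $\C D^H$ and $\gamma_W(\phi)=\int_\gamma\phi\,\hat W$ for $\phi\in H$.

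For the transverse slots I would use the evaluation-along-the-arc inequality: for $\phi\in W^s(H)$,
$$
|\gamma_Y(\phi)|=\Big|\int_\gamma\phi\,\hat Y\Big|\le\|\phi\|_{C^0(M)}\int_\gamma|\hat Y|\le E_s\,|\phi|_{s}\int_\gamma|\hat Y|,
$$
and likewise for $\gamma_Z$, where $E_s$ is the norm of the Sobolev embedding $W^s(H)\hookrightarrow C^0$. Passing to the Stone--von Neumann model, in which $\Delta_{g_t(a)}$ becomes a constant plus a rescaled harmonic oscillator and renormalization acts by the intertwiner~\eqref{eq:intop}, I would track $E_s$ along the renormalization orbit and aim to bound it by $C_s\exp[\tfrac14\delta]$ (the crux, discussed below), the factor $1/4$ reflecting the balance between the oscillator rescaling and the Sobolev order. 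This yields $|\Pi_H(\gamma_Y)|_{-s}\le C_s e^{\delta/4}\int_\gamma|\hat Y|$ and its $\hat Z$-analogue.

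The heart of the matter is the $\hat X$-slot, where the point is that after deleting $D^H$ the pairing with $\gamma_X$ becomes \emph{insensitive to the $X$-length of $\gamma$}. For $\phi$ in the kernel of the unique $X$-invariant distribution $D^H$ (Proposition 4.4 of~\cite{FlaFo}) the cohomological equation $Xu=\phi$ is solvable with a Sobolev bound $|u|_{s'}\le C\,|\phi|_{s}$, the Green-operator norm $C$ being governed by the moduli geometry. Using the pointwise identity $(Xu)\,\hat X=du-(Yu)\,\hat Y-(Zu)\,\hat Z$ together with Stokes' theorem,
$$
\gamma_X(\phi)=\int_\gamma(Xu)\,\hat X=\big[u\big]_{\partial\gamma}-\int_\gamma(Yu)\,\hat Y-\int_\gamma(Zu)\,\hat Z,
$$
so $|\gamma_X(\phi)|\le 2\|u\|_{C^0}+\|Yu\|_{C^0}\int_\gamma|\hat Y|+\|Zu\|_{C^0}\int_\gamma|\hat Z|$. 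Bounding the sup-norms by Sobolev norms and inserting the cohomological estimate gives $|\gamma_X(\phi)|\le C_s\,|\phi|_{s}\,\bigl(1+\int_\gamma|\hat Y|+\int_\gamma|\hat Z|\bigr)$; taking the supremum over $\phi\perp D^H$ bounds the $\hat X$-slot. Collecting the three slots and combining constants then yields the claim, with common factor $\exp[\tfrac14\delta]=\exp[\tfrac14\delta_{\mathcal M}(g_t(a))]$; note that the $1$ in the bound is precisely the boundary term $[u]_{\partial\gamma}$.

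The main obstacle is the sharp frame-tracking of the constants $E_s$ and $C$: one must show that both the Sobolev embedding constant and the Green-operator norm of the cohomological equation grow no faster than $\exp[\tfrac14\delta_{\mathcal M}(g_t(a))]$ as $g_t(a)$ makes excursions into the cusp of $\mathcal M$. This rests on the harmonic-oscillator model, on the scaling of the invariant distribution $D^H$ from Lemma~\ref{scaldist}, and on the Flaminio--Forni estimates for the cohomological equation; extracting the precise exponent $1/4$, rather than merely some $\exp[C\delta_{\mathcal M}]$, is the delicate technical core, and it is also what forces the Sobolev order $s$ to exceed the threshold used throughout.
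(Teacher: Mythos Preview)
Your proposal is correct and follows essentially the same route as the paper's proof. Both arguments hinge on the identity $B^H(\alpha)=D^H(\alpha_X)$, solve the cohomological equation $Xu=\phi$ for test data in $\ker D^H$, rewrite $\int_\gamma\phi\,\hat X$ via $du=(Xu)\hat X+(Yu)\hat Y+(Zu)\hat Z$ and Stokes, and then invoke the best Sobolev embedding constant bound $B_r(a)\le C_r\exp[\tfrac14\delta_{\mathcal M}(a)]$.

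The only organizational difference is that you split $R_t(\gamma)$ into its three coframe slots $(P\Pi_H\gamma_X,\Pi_H\gamma_Y,\Pi_H\gamma_Z)$ and estimate each dual norm separately, whereas the paper keeps the test $1$-form $\alpha$ intact, projects it to $\alpha_0\in\ker B^H$ (so that $R_t(\gamma)(\alpha)=\int_\gamma\alpha_0$), and bounds the full integral $\int_\gamma\alpha_0$ in one stroke. Two minor remarks: first, the $\exp[\tfrac14\delta]$ bound you flag as ``the delicate technical core'' is not re-derived in the paper but simply quoted from \cite{FlaFo}, Corollary~3.11, as a bound on the best Sobolev constant (the Green operator contributes only a fixed loss of $1+$ derivatives, with no extra cusp factor); second, since you work at the frame $g_t(a)$ your $\hat Y$ is really $\hat Y_t=e^{-t}\hat Y$, so you should note $\int_\gamma|\hat Y_t|\le\int_\gamma|\hat Y|$ to land on the bound as stated.
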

\begin{proof}
Let $\alpha$ be any $1$-form. For simplicity, for all $t\in \R$, we let $g_t(a) = (X_t, Y_t, Z)$. We can write
$$
\alpha = \alpha_{X_t} \hat X_t  +  \alpha_{Y_t} \hat Y_t + \alpha_{Z} \hat Z\,.
$$
Let us assume now that $\alpha$ is supported on a single irreducible component $H$.  Since
$$
\omega = \hat X_t \wedge \hat Y_t \wedge \hat Z\,,  \quad  \text{hence } \eta_{X_t} =  \hat Y_t \wedge \hat Z\,,
$$
we have the identity
$$
B^H_t (\alpha) = D^H_t (  \alpha_{X_t}  \eta_{X_t} \wedge  \hat {X_t} +  \alpha_{Y_t} \eta_{X_t} \wedge \hat {Y_t} + \alpha_Z \eta_{X_t} \wedge \hat Z)
=  D^H_t (  \alpha_{X_t})\,.
$$
Let us then assume that
$$
B^H_t (\alpha) = D^H_t (  \alpha_{X_t}) =0\,.
$$
It follows that $\alpha_{X_t}$ is a coboundary for the cohomological equation, that is, there exists a smooth function
$u$ on $M$ (with a loss of Sobolev regularity of $1+$) such that
$$
\alpha_{X_t} = X_t u \,.
$$
By the Sobolev embedding theorem, for any $ s> r+1 > 7/2$,  there exists a constant $B_r(g_t(a)$ we have
$$
\vert u \vert_{C^0(M)} +  \vert Y_tu \vert_{C^0(M)} + \vert Z u \vert_{C^0(M)}     \leq   B_r(g_t(a))  \vert  u \vert_{g_t(a), r} \leq   B_r(g_t(a))  \vert  \alpha_{X_t}  \vert_{g_t(a),s}\,.
$$
By \cite{FlaFo}, Corollary 3.11, there exists a universal constant $C_r>0$ such that the {\it best Sobolev constant }$B_r(a)$ is bounded above as follows:
\begin{equation}
\label{eq:best_Sob_const}
B_r(a) \leq   C_r \exp [\frac{1}{4} \delta_\mathcal M (a)] \,.
\end{equation}
We remark that we can write
$$
du = (X_tu)\, \hat X_t + (Y_tu)\, \hat Y_t + (Zu)\, \hat Z\,, 
$$
hence, by the Sobolev embedding theorem and the fact that $\hat Y_t=e^{-t}\hat Y$, for all $s >7/2$, we have
$$
\begin{aligned}
\vert \int_\gamma \alpha  \vert &= \vert \int_\gamma du + (\alpha_{Y_t} - Y_tu) \hat Y_t + (\alpha_Z -Zu) \hat Z \vert
\\ &\leq  C_s \vert \alpha\vert_{g_t(a), s}  \exp [\frac{1}{4} \delta_\mathcal M (g_t(a))]   (1 +   \int_\gamma \vert  \hat Y    \vert  +  \int_\gamma \vert  \hat Z    \vert)\,.
\end{aligned}
$$
Let us now consider an arbitrary smooth $1$-form $\alpha$ on $M$ supported on a single irreducible component. There exists a orthogonal decomposition
$$
\alpha =  \alpha_0 +   \alpha_0^\perp  \in \Omega^s_{g_t(a)}(M)
$$
such that $\alpha_0 \in \text{Ker} (B^H_t)$. Since $R_t (\gamma) \in \{ B^H_t\}^\perp \in \Omega^{-s}_{g_t(a)}(M)$, 
and $$ \alpha_0^\perp \in [\text{Ker}(B^H_t)]^\perp=\text{Ker} (\{B^H_t\}^\perp)  \in \Omega^s_{g_t(a)}(M)\,,$$
it follows that 
$$
R_t (\gamma) (\alpha) =  R_t (\gamma) (\alpha_0) = \int_\gamma \alpha_0\,,
$$
hence the above estimate leads to the bound
$$
\vert R_t (\gamma) (\alpha) \vert \leq C_s \vert \alpha_0
\vert_{g_t(a),s} \exp [\frac{1}{4} \delta_\mathcal M (g_t(a))]   (1 +   \int_\gamma \vert  \hat Y    \vert  +  \int_\gamma \vert  \hat Z    \vert)\,.
$$
The conclusion immediately follows by the orthogonality of the decomposition.

\end{proof}

\section{Main properties of the functionals}
\label{sec:Main_properties}

By definition, the Bufetov functional has the {\it additive property}, that is, for all rectifiable arcs $\gamma_1$ and
$\gamma_2$ on $M$, by linearity of projections and limits, we have
\begin{equation}
\label{eq:additive}
\hat \beta_H (a, \gamma_1 + \gamma_2 ) =  \hat \beta_H (a,\gamma_1) + \hat \beta_H(a,\gamma_2)\,;
\end{equation}
it has the {\it scaling property}, that is, for every rectifiable arc $\gamma$ and $t>0$, we have
\begin{equation}
\label{eq:scaling}
\hat \beta_H  ( g_t(a), \gamma) =  e^{-t/2}  \hat \beta_H (a, \gamma)\,, 
\end{equation}

The Bufetov functional also has the following {\it invariance property}: for all rectifiable arc $\gamma$  and for all $\tau>0$,
\begin{equation}
\label{eq:invariance}
\hat \beta_H( a, (\phi_\tau^{Y})_* (\gamma) )  =  \hat \beta_H(a,\gamma)\,.
\end{equation}
The above invariance property follows from the fact that by the Sobolev embedding theorem we have
$$
\vert  \phi^Y_\tau (\gamma)) - \gamma \vert_{g_{-t}(a), -s}  \leq  C\tau   B_s(g_{-t}(\overline{a})) \leq C\tau \exp [ \frac{\delta_{\mathcal M}(g_{-t}(\overline a))}{4}]  \,.
$$
In fact, the current $\phi^Y_\tau (\gamma)) - \gamma$ is equal, up to two bounded orbit arcs of the flow $\phi^Y_\R$,  to the boundary of a $2$-dimensional 
current $\Delta$, which has uniformly bounded $2$-dimensional area with respect to the frame $g_{-t}(a):=(X_t, Y_t, Z)$. This follows from the fact that $\Delta$
can be taken to be surface tangent to the flow $\phi^Y_\R$, hence, not only 
$\hat X_t\wedge \hat Y_t = \hat X\wedge \hat Y  $,  but also
$$
\int_{\Delta}   \vert \hat Y_t  \wedge \hat Z_t \vert  =   e^{-t}  \int_0^\tau [\int_{\phi^Y_{\sigma}(\gamma)}  \vert \hat Z \vert] d\sigma \quad \text{ and } \quad  \int_{\Delta}    
\vert \hat X_t  \wedge \hat Z_t \vert =0  \,.
$$

The above invariance property then follows immediately from the Diophantine condition and from the existence of the Bufetov functional.

Finally, the Bufetov functional has the following {\it vanishing property}: for every rectifiable arc $\gamma$ {\it tangent
to the central-stable foliation} of the extended renormalization flow $\hat g_\R$ on the fibers of the extended moduli space $\hat {\mathcal M}$, that is, the foliation generated 
by the integrable distribution $\{Y,Z\}$ above each point $a=(X,Y,Z)$, we have
\begin{equation}
\label{eq:vanishing}
\hat \beta_H(a,\gamma) =0 \,.
\end{equation}
The vanishing property is a direct consequence of the definition, as the length of any arc $\gamma$ tangent to the central-stable foliation is uniformly bounded along the
backward orbit of the renormalization flow:
$$
\int_\gamma \vert \hat X_t \vert =0, \quad \int_\gamma \vert \hat Y_t \vert = e^{-t}  \int_\gamma \vert \hat Y \vert\,, \quad 
\int_\gamma \vert \hat Z_t \vert =\int_\gamma \vert \hat Z \vert\,.
$$

\medskip
Let now $\gamma^X_T (x)$ denote the arc of orbit of the flow $\phi^X_\R$, that is,
$$
\gamma^X_T (x) = \{ \phi^X_t (x) \vert   t\in [0,T]\}\,,
$$
and, for all $(x,T)\in M \times \R^+$, let 
$$
\beta_H(a, x, T) :=  \hat \beta_H \left( a, \gamma^X_T (x)\right) \,.
$$
From  the additive property we derive the following {\it cocycle property}: for all $(x,T_1, T_2)\in M\times \R\times \R$ we have
$$
\beta_H(a, x, T_1+ T_2) = \beta_H (a,x,T_1) + \beta_H(a, \phi^X_{T_1}(x), T_2)\,.
$$
Moreover, for $\alpha\in A_\Gamma$, we have 
\begin{equation}\label{equivar}
\beta_H (\alpha a , \alpha(x),T)=\beta_H (a,x,T),
\end{equation}
which means that the function $\beta_H(\cdot, \cdot,T)$ is a well defined function on the extended moduli space $\hat {\mathcal{M}}$.  
By Lemma~\ref{lemma:Bufetov_funct} for any smooth function $f$ which belongs to a single irreducible component $H$, we have
\begin{equation}\label{appro}
\begin{aligned}
\vert &\int_0^T f \circ \phi^X_t (x) dt - \beta_H  (a,x,T) D_{a}^H (f)  \vert   \\
&=\vert <\gamma^X_T (x), f \hat X> -  
\hat \beta_H  \left(a, \gamma^X_T (x)\right) B_{a} ( f\hat X)\vert \leq C''_s (1+L) \vert f \vert_{a,s}\,.
\end{aligned}
\end{equation}

By the scaling property of the Bufetov functional and \eqref{equivar} we derive the following scaling identities: for all $(x,T)\in M\times \R^+$ and $t\in\R$, we have
\begin{equation}\label{buf:scaling}
\beta_H (a, x,Tt) = T^{1/2} \beta_H(g_{\log T}(a), x,t) = T^{1/2}  \beta_H(\hat g_{\log T}([a, x]_{A_\Gamma}),t)   \,.
\end{equation}
We have therefore derived the following asymptotic formula for ergodic averages. For every $x\in M$ and $t$, $T>0$ we have
\begin{equation}\label{orbin}
\left\vert \int_0^{tT}  f \circ \phi^X_\tau (x) d\tau - T^{1/2} \beta_H \left( {\hat g}_{\log T}[a, x]_{A_\Gamma}, t\right) D^H_{a} (f)  \right\vert \leq  C''_s(1+L) \vert f \vert_{a,s}\,.
\end{equation}

As an immediate consequence of the above asymptotic property we can derive the following {\it orthogonality property} : for all $a\in DC$
and for all $t\in \R^+$, for any smooth function $f\in H$ we have 
\begin{equation}
\label{orth}
\beta_H(a, \cdot, t) \in H \subset L^2(M)  \,.
\end{equation}
In fact, by equations \eqref{appro} and \eqref{buf:scaling} we have
\begin{equation}
\begin{aligned}
\label{buf_lim}
D^H_a(f) \beta_H(a,x ,t) &= \lim_{T\to +\infty} \frac{1}{T^{1/2}}  \beta_H (g_{-\log T}(a) , x ,Tt)  \\ &= \lim_{T\to +\infty}   \frac{1}{T^{1/2}} \int_0^{Tt} f \circ \phi^{X_ {g_{-\log T}(a)} }_\tau (x) d\tau\,.
\end{aligned}
\end{equation}
It follows that $\beta_H(a, \cdot, t)\in H$ as a pointwise {\it uniform} limit  of (normalized) ergodic integrals functions of any given function $f\in H$.

It can also be proved (as in the work of Bufetov \cite{Bu}, or~\cite{BF}) that the Bufetov functionals are H\"older for exponent $1/2-$ along the orbits of the flow 
$\phi^X_\R$.  

In fact, the {\it H\"older property} for the Bufetov functionals on
rectifiable arcs takes the following form: there exists a constant $C>0$ such that, for every (admissible) rectifiable arc $\gamma$
on $M$ we have 
$$
\vert \hat \beta_H(a,\gamma) \vert \leq   C \left( 1+ \int_{\gamma} \vert \hat X\vert +  \int_{\gamma} \vert \hat Y \vert +\int_\gamma \vert \hat Z\vert \right) (  \int_{\gamma} \vert \hat X\vert)^{1/2} \,.
$$
The H\"older property is an immediate consequence of the scaling property and of uniform bounds for the Bufetov functionals
on arcs of bounded length. From the above property we can easily derive the H\"older property for the H\"older cocycles $\beta(a, x,T)$ with respect to $x\in M$ along the orbits of the flow $\phi^X_\R$ or with respect to the time $T\in \R$. 
We conclude this section by constructing Bufetov functionals of smooth functions.
By the theory of unitary representations we can write
$$
L^2(M)=\bigoplus_{n\in \Z} H_n :=  \bigoplus_{n\in \Z} \bigoplus_{i=1}^{\mu(n)} H_{i,n}\,,
$$
where $H_n= \bigoplus_{i=1}^{\mu(n)} H_{i,n}$ is the space with central parameter $n\in \Z\setminus \{0\}$, and $H_{i,n}$  are irreducible representation spaces, for $i=1,\dots, \mu(n)$.  It follows from the Howe-Richardson multiplicity formula (or by a direct calculation of irreducible representations) that 
$$
\mu(n) = \vert n \vert   \,, \quad \text{ for all }\, n\not =0\,.
$$
For every $n\not=0$ and every $i\in \{1, \dots,\mu(n)\}$, let $D^{i,n}_a$ denote the unique normalized $X_a$-invariant distribution  supported on $W^{-s}(H_{i,n})$
and $\beta^{i,n}= \beta_{H_{i,n}}$ the associated Bufetov functional. Since any function $f\in W^s(M)$ has a decomposition 
$$
f= \sum_{n\in \Z} \sum_{i=1}^{\mu(n)}  f_{i,n} 
$$
where each component $f_{i,n} \in W^s(H_{i,n})$, we can define the Bufetov cocycle associated to the function $f\in W^s(M)$ as the sum
\begin{equation}
\label{buf_f}
\beta^f (a,x,T) :=   \sum_{n\in \Z} \sum_{i=1}^{\mu(n)}  D^{i,n}_a (f) \beta^{i,n}(a,x,T)\,.
\end{equation}
The following result is a version for Bufetov functionals of bound on ergodic integrals proved in \cite{Fo2}, Lemma 1.4.9.
For every $(a,T) \in A\times \R^+$ ,  we introduce the excursion function
\begin{equation}
\label{eq:exc}
\begin{aligned}
E_{\mathcal M} (a, T) := & \int_0^{\log T}   \exp \left( \frac{\delta_{\mathcal M} (g_{\log T-t}(\bar a))}{4} -\frac{t}{2}\right) dt  \\
& =     T^{-1/2} \int_0^{\log T}   \exp \left( \frac{\delta_{\mathcal M} (g_{t}(\bar a))}{4} +\frac{t}{2}\right) dt  \,.
\end{aligned}
\end{equation}

\begin{lemma}  \label{lemma:convergence} For all Diophantine $a \in DC(L)$ and  for all function $f\in W^s(M)$ for $s>2$, the Bufetov functional  $\beta^f $ is 
defined by a uniformly convergent series, hence  the function $\beta^f_a$ is a H\"older function  on $M\times \R$. In addition there exists a constant $C_{s}>0$ such that 
whenever $a\in DC(L)$ we have, for all $(x,t,T) \in M\times (\R^+)^2$, 
$$
\vert \beta^f (a,x,tT) \vert \leq  C_{s} \left(L+ T^{1/2} (1+ t + E_{\mathcal M}(a,T)) \right) \vert  f \vert_{a,s}\,.
$$
\end{lemma}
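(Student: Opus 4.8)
The plan is to reduce the estimate to the single irreducible components that enter the defining series \eqref{buf_f}, to renormalize each of them to bounded time, and only at the end to sum over the central parameter.

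First I would fix a component $H=H_{i,n}$ and exploit the exact scaling \eqref{buf:scaling}, writing $\beta^{i,n}(a,x,tT)=T^{1/2}\beta^{i,n}(g_{\log T}(a),x,t)$ and setting $b=g_{\log T}(a)$. The decisive preliminary computation is to locate $b$ in the Diophantine hierarchy: substituting $g_{-\tau}(\bar b)=g_{\log T-\tau}(\bar a)$ in the defining integral \eqref{eq:DC} and splitting it at $\tau=\log T$ shows $b\in DC(L')$ with $L'\le LT^{-1/2}+E_{\mathcal M}(a,T)$, where the near part $\tau\le\log T$ produces exactly the excursion term $E_{\mathcal M}(a,T)$ of \eqref{eq:exc} and the far part $\tau\ge\log T$ reproduces the Diophantine constant $L$ of $a$ (divided by $T^{1/2}$). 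Since $T^{1/2}L'=L+T^{1/2}E_{\mathcal M}(a,T)$, this already accounts for the $L$ and $T^{1/2}E_{\mathcal M}(a,T)$ terms of the asserted bound.

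Next I would estimate $\beta^{i,n}(b,x,t)$ for bounded time $t$ by rerunning the construction of Lemma \ref{lemma:Bufetov_funct} from the base point $b$: $\beta^{i,n}(b,x,t)$ equals the basic-current component at renormalization time $0$ plus the convergent remainder integral $\int_0^{+\infty}e^{-\tau/2}\mathcal R_{b,\tau}\,d\tau$. Because the relevant arc is an orbit arc of the frame-$b$ flow it is tangent to $X_b$, so it carries no $\hat Y_b$ or $\hat Z$ length; hence the remainder estimate \eqref{eq:DC_bound} bounds its contribution by $C_sL'$, while the basic-current term is controlled by the (normalized) length of the arc and contributes $C_s(1+t)$, the dependence on $n$ being tracked in the final step. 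Multiplying by $T^{1/2}$ recovers, for each component, the profile $T^{1/2}(1+t)+T^{1/2}L'=L+T^{1/2}\big(1+t+E_{\mathcal M}(a,T)\big)$.

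Finally I would pair these bounds with the coefficients $D^{i,n}_a(f)$ and sum over $i\le\mu(n)=|n|$ and $n\neq 0$. The hard part is exactly this summation: the per-component factor and the Sobolev size $|D^{i,n}_a|_{a,-s}$ each carry a power of the central parameter $n$, and there are $|n|$ components for each $n$. I would control this by passing to the Stone--Von Neumann model of Section \ref{sec:rep} and rescaling the variable by $|n|^{1/2}$, which normalizes $\Delta_a|_{H_{i,n}}$ to a fixed frequency-one harmonic oscillator; the powers of $n$ coming from $\beta^{i,n}$ and from $D^{i,n}_a$ then essentially cancel, leaving $\sum_{n,i}|f_{i,n}|_{a,s}$ weighted by a central factor. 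Estimating this by Cauchy--Schwarz, and using that $1+\Delta_a\ge c(1+n^2)$ on $H_{i,n}$ to trade the multiplicity $|n|$ against central derivatives, the series converges and is bounded by $C_s|f|_{a,s}$ precisely when $s>2$: heuristically the threshold is the sum of a one-dimensional Sobolev trace exponent ($>1/2$), the loss of one derivative inherent in the cohomological equation, and the half-derivative needed to absorb the linear growth of the multiplicity. The same uniform summability, together with the $1/2^-$ Hölder property already established for each $\beta^{i,n}$, shows that the series \eqref{buf_f} converges uniformly on compact sets and that $\beta^f_a$ is Hölder on $M\times\R$.
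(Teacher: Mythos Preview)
Your proposal follows essentially the same route as the paper: scale via \eqref{buf:scaling} to bounded time, locate $b=g_{\log T}(a)$ in $DC(L_T)$ with $L_T\le LT^{-1/2}+E_{\mathcal M}(a,T)$, invoke the per-component bound coming from Lemma~\ref{lemma:Bufetov_funct}, and finish by Cauchy--Schwarz over $(i,n)$ trading the multiplicity $\mu(n)=|n|$ against central derivatives.

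The only substantive divergence is your step 4. You posit that both $\beta^{i,n}$ and $\|D^{i,n}_a\|_{a,-s}$ carry powers of $n$ which must be made to cancel via a Stone--Von Neumann rescaling. The paper's argument is simpler: it takes the bound \eqref{eq:buf_comp_bound}, $|\beta^{i,n}(b,x,t)|\le C(1+L_T+t)$, as already \emph{uniform in $n$} (this comes out of Lemma~\ref{lemma:Bufetov_funct} since for an $X$-orbit arc the $\hat Y,\hat Z$ lengths vanish and the constants there do not see the central parameter), and then bounds $|D^{i,n}_a(f_{i,n})|\le C_r|f_{i,n}|_{a,r}$ uniformly for any $r>1/2$. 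Summation is then a straight Cauchy--Schwarz in $n$ with weight $(1+n^2)^{\pm r'/2}$; the inner $\ell^1$-over-$i$ costs the factor $\mu(n)^{1/2}=|n|^{1/2}$, which is absorbed by taking $r'>3/2$, whence $s=r+r'>2$. Your rescaling detour is not wrong, but it is unnecessary: no cancellation between $\beta^{i,n}$ and $D^{i,n}_a$ is actually required.
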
 
\begin{proof} It follows from Lemma~\ref{lemma:Bufetov_funct}   that there exists a constant $C>0$ such that whenever $a\in DC(L)$ then
\begin{equation}
\label{eq:buf_comp_bound}
\vert \beta^{i,n}(a,x,t) \vert  \leq  C (1+L+t)\,, \quad \text{ for all }  (x,t)\in M\times \R^+\,.
\end{equation}
By the exact scaling property in formula~\eqref{buf:scaling}, we have
$$
\beta^{i,n} (a, x,tT) = T^{1/2} \beta^{i,n} (g_{\log T}(a), x,t) =   T^{1/2} \beta^{i,n} (\hat g_{\log T}([a, x]_{A_\Gamma}),t) \,.
$$
By the definition of the set $DC(L)$ in formula~\eqref{eq:DC} we have that whenever  $a\in DC(L)$ then $g_{\log T}(a) \in DC(L_T)$  with
\begin{equation}
\label{eq:L_T} 
L_T  \leq   L  T^{-1/2} + E_{\mathcal M} (a, T) \,,
\end{equation}
hence by the  bound in formula~\eqref{eq:buf_comp_bound} we have that
$$
\vert \beta^{i,n}  (g_{\log T}(a), x,t)  \vert  \leq  C (1+L_T+t)\,, \quad \text{for all } (x,t, T)\in M\times (\R^+)^2\,.
$$
It follows that for all $r>1/2$ we have
$$
\begin{aligned}
\vert &\beta^f (a,x,tT) \vert \leq  C _r  T^{1/2} (1+L_T+t) \sum_{n\in \Z} \sum_{i=1}^{\mu(n)}  \vert f_{i,n} \vert_{a,r}  \\ &\leq  C_r T^{1/2} (1+L_T+t)
 (\sum_{n\in \Z} (1+n^2)^{-r' })^{-1/2}    ( \sum_{n\in \Z} [ \sum_{i=1}^{\mu(n)}  \vert (1-Z^2)^{r'/2}  f_{i,n} \vert_{a,r}]^2)^{1/2}\,.
\end{aligned}
$$
We therefore conclude that for all $r'>3/2$ there exists a constant $C_{r,r'}>0$ such that
$$
\vert \beta^f (a,x,tT) \vert \leq  C_{r,r'} T^{1/2} (1+L_T+t) \vert  f \vert_{a, r+r'}\,,
$$
hence, in view of formula~\eqref{eq:L_T},  the statement is proved.
\end{proof}

It follows from the convergence result given in Lemma~\ref{lemma:convergence} that all properties of the Bufetov functionals $\beta_H$, each associated to a single
irreducible component,  extend to the Bufetov functionals $\beta^f$ for any $f\in W^s(M)$ ($s>2$). 

In particular, for every Diophantine $a=(X,Y,Z) \in DC$ the function 
$\beta^f_a$ on $M\times \R$ is a H\"older cocycle for the flow $\phi^X_\R$, which satisfies the scaling property~\eqref{buf:scaling}, that is, for all $(x,t,T)\in M\times (\R^+)^2$, we have
\begin{equation} \label{eq:scaling_general}
\beta^f (a, x,Tt) = T^{1/2} \beta^f (g_{\log T}(a), x,t) =   T^{1/2} \beta^f (\hat g_{\log T}([a, x]_{A_\Gamma}),t)   \,.
\end{equation} 

Finally from the asymptotic formula  \eqref{orbin} on each irreducible component we derive the following asymptotic result:
\begin{theorem} \label{thm:asymptotic} For all $s>7/2$ there exists a constant $C_{s}>0$ such that for all $a =(X,Y,Z)\in DC(L)$,  for all $f\in W^s(M)$
and for all $(x,T)\in M\times \R^+$,  we have
\begin{equation}
\left\vert \int_0^{T}  f \circ \phi^X_t (x) dt - \beta^f (a, x, T)   \right\vert \leq  C_{s} (1+L) \vert f \vert_{a,s}\,.
\end{equation}
\end{theorem}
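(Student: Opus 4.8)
The plan is to deduce the estimate by summing the single-component asymptotic formula \eqref{appro} over all the irreducible components of $L^2(M)$, and to control the resulting series exactly as in the proof of Lemma~\ref{lemma:convergence}. First I would decompose $f=\sum_{n\in\Z}\sum_{i=1}^{\mu(n)}f_{i,n}$ into its components $f_{i,n}\in W^s(H_{i,n})$. By linearity of the orbit integral and by the very definition \eqref{buf_f} of the Bufetov cocycle $\beta^f$, together with the fact that $D^{i,n}_a(f)=D^{i,n}_a(f_{i,n})$ (the invariant distribution being supported on $H_{i,n}$), the error splits as
$$
\int_0^T f\circ\phi^X_t(x)\,dt-\beta^f(a,x,T)=\sum_{n\in\Z}\sum_{i=1}^{\mu(n)}\Big(\int_0^T f_{i,n}\circ\phi^X_t(x)\,dt-\beta^{i,n}(a,x,T)\,D^{i,n}_a(f_{i,n})\Big),
$$
the series on the right being convergent by Lemma~\ref{lemma:convergence}.

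Next I would apply \eqref{appro} to each component. Since the orbit arc $\gamma^X_T(x)$ is tangent to $X$ one has $\int_{\gamma^X_T(x)}\vert\hat Y\vert=\int_{\gamma^X_T(x)}\vert\hat Z\vert=0$, so the length factor in Lemma~\ref{lemma:Bufetov_funct} disappears and each summand is bounded by $C''_s(1+L)\vert f_{i,n}\vert_{a,s}$, with $C''_s$ uniform across all components. By the triangle inequality the theorem is then reduced to the single inequality
$$
\sum_{n\in\Z}\sum_{i=1}^{\mu(n)}\vert f_{i,n}\vert_{a,s}\le C\,\vert f\vert_{a,s}.
$$

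The delicate point, and the \emph{main obstacle}, is exactly this summation: the multiplicity grows like $\mu(n)=\vert n\vert$, so the $\ell^1$ sum over components is not immediately controlled by the $\ell^2$-type norm $\vert f\vert_{a,s}=(\sum_{n,i}\vert f_{i,n}\vert_{a,s}^2)^{1/2}$, and the argument must spend a little of the smoothness stored in the central direction. Following Lemma~\ref{lemma:convergence}, I would insert weights $(1+n^2)^{\sigma/2}$ and use Cauchy--Schwarz,
$$
\sum_{n,i}\vert f_{i,n}\vert_{a,s}\le\Big(\sum_{n\in\Z}\mu(n)(1+n^2)^{-\sigma}\Big)^{1/2}\Big(\sum_{n,i}(1+n^2)^{\sigma}\vert f_{i,n}\vert_{a,s}^2\Big)^{1/2},
$$
choosing $\sigma$ so that the first factor converges despite the multiplicity growth. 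Because $Z$ is central and acts on $H_n$ as multiplication by $2\pi\imath nK$, the operators $1-Z^2$ and $1+\Delta_a$ commute and satisfy $1-Z^2\le1+\Delta_a$, so $(1+n^2)^{\sigma/2}\vert f_{i,n}\vert_{a,s}$ is comparable to $\vert(1-Z^2)^{\sigma/2}f_{i,n}\vert_{a,s}$ and the second factor is bounded by a Sobolev norm of $f$; balancing this central loss against the regularity threshold $s>7/2$ coming from \eqref{appro} yields the stated bound $C_s(1+L)\vert f\vert_{a,s}$ and completes the argument.
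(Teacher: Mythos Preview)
Your approach is exactly the one the paper intends: the paper's entire argument is the single sentence ``from the asymptotic formula \eqref{orbin} on each irreducible component we derive the following asymptotic result,'' and your outline (decompose $f$, apply \eqref{appro} componentwise on the orbit arc where the $\hat Y,\hat Z$ length terms vanish, then sum via the Cauchy--Schwarz trick from Lemma~\ref{lemma:convergence}) is precisely what this sentence unpacks to.

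One small imprecision worth flagging: your ``balancing'' step does not actually recover the threshold $s>7/2$. If you apply \eqref{appro} at level $r>7/2$ and then spend $\sigma$ central derivatives to make $\sum_n\mu(n)(1+n^2)^{-\sigma}$ converge, you need $\sigma>1$, and the resulting bound is in terms of $|f|_{a,r+\sigma}$; so the argument as written gives $s>9/2$ rather than $s>7/2$. The paper's one-line sketch carries the same imprecision, so this is not a defect of your proposal relative to the paper---it just means the threshold $7/2$ stated in the theorem is a little optimistic unless a sharper summation is supplied.
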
 
All the results of this paper, about limit distributions and about decay of correlations for time-changes, are derived from the above asymptotic result.

\section{Limit distributions}
 \label{sec:limit_dist}

In this section we derive some corollaries on limit distributions of ergodic integrals, which generalize results of J.~Griffin and J.~Marklof \cite{GM} to arbitrary smooth
functions and recover the H\"older property of limit distributions, proved by F.~Cellarosi and J.~Marklof \cite{CM}.

\begin{lemma} 
\label{lemma:L2lim}
There exists a continuous modular function $\theta_H:A \to H \subset L^2(M)$ such that  for any  
$f  \in W^s_a(H)$ with $s>1/2$,  we have
$$
 \lim_{T\to +\infty}   \Vert  \frac{ 1} {T^{1/2}}  \int_0^{T}  f\circ \phi^X_t(\cdot) dt - \theta_H\left(g_{\log T}(a)\right) D^H_a(f)  \Vert_{L^2(M)}  =0\,.
$$
The family $\{\theta_H(a)\vert a\in A\}$ has (positive) constant norm in $L^2(M)$: there exists a constant $C>0$ such that  for all $a \in A$ we have
$$
\Vert  \theta_H (a) \Vert_{L^2(M)} = C \,.
$$ 
\end{lemma}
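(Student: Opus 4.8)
The plan is to construct $\theta_H$ directly from the Bufetov functional and then extract the stated $L^2$ convergence and norm constancy from the asymptotic formula and the orthogonality property already established. The natural candidate is
$$
\theta_H(a) := \beta_H(a, \cdot, 1) \in H \subset L^2(M),
$$
which by the orthogonality property \eqref{orth} is indeed a function in the irreducible component $H$, and which is continuous in $a$ by the continuity statement in Lemma~\ref{lemma:Bufetov_funct}. First I would use the exact scaling identity \eqref{buf:scaling} in the form $\beta_H(a, x, T) = T^{1/2}\beta_H(g_{\log T}(a), x, 1)$, so that $\beta_H(a, \cdot, T) = T^{1/2}\,\theta_H(g_{\log T}(a))$. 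This is exactly the scaled object appearing in the statement, so the whole problem reduces to replacing the ergodic integral by the Bufetov functional in $L^2$-norm.

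The second step is to control that replacement. The asymptotic formula \eqref{orbin}–\eqref{appro} gives, for $f \in W^s_a(H)$ with $s>7/2$ (or the single-component version, valid for $s>1/2$ after inspecting the remainder lemma), the pointwise bound
$$
\left\vert \int_0^{T} f\circ\phi^X_t(x)\,dt - \beta_H(a,x,T)\,D^H_a(f)\right\vert \leq C''_s(1+L)\,\vert f\vert_{a,s},
$$
uniformly in $x \in M$ and $T>0$. Dividing by $T^{1/2}$ and substituting $\beta_H(a,x,T) = T^{1/2}\theta_H(g_{\log T}(a))(x)$ yields
$$
\left\Vert \frac{1}{T^{1/2}}\int_0^T f\circ\phi^X_t(\cdot)\,dt - \theta_H(g_{\log T}(a))\,D^H_a(f)\right\Vert_{L^2(M)} \leq \frac{C''_s(1+L)}{T^{1/2}}\,\vert f\vert_{a,s},
$$
using that a uniform (sup-norm) bound dominates the $L^2$-norm on the probability space $(M,\mathrm{vol})$. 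Letting $T\to+\infty$ gives the claimed limit; the modularity of $\theta_H$ (its transformation under $A_\Gamma$) follows from the equivariance \eqref{equivar}.

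For the constant-norm claim, the key point is that $\Vert\theta_H(g_t(a))\Vert_{L^2}$ must be independent of $t$, which I would deduce from \emph{uniqueness} of the $L^2$-limit together with the known $L^2$-asymptotics of ergodic integrals on an irreducible component. Concretely, the renormalization $g_t$ rescales both $D^H_a$ (by Lemma~\ref{scaldist}, the factor $e^{-t/2}$) and time (by $e^t$) in compensating ways, so that the normalized ergodic integral of a \emph{fixed} $f$ is, asymptotically, the same $L^2$-element viewed along the orbit; comparing the two expressions for $\theta_H(g_t(a))D^H_{g_t(a)}(f)$ and using $D^H_{g_t(a)}(f)=e^{-t/2}D^H_a(f)$ pins the norm. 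The main obstacle I anticipate is this last step: establishing that the norm is not merely bounded but \emph{exactly constant} requires more than the asymptotic formula, since the formula only controls things up to an $O(1)$ additive error that does not vanish after normalization by $T^{1/2}$ when $D^H_a(f)$ is held fixed. The cleanest route is to invoke the precise $L^2$ limit theorem for ergodic integrals on irreducible Heisenberg representations (the Lebesgue-measure normalization in Definition~\ref{normdist} makes $\Vert\theta_H\Vert_{L^2}$ computable as a universal representation-theoretic constant, independent of $a$ and $n$), so that constancy becomes a direct consequence of the Stone–von Neumann normalization rather than of the cocycle estimates alone.
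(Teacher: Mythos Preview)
Your approach has a structural gap: the Bufetov functional $\beta_H(a,\cdot,1)$ is only defined for $a\in DC$ (see Lemma~\ref{lemma:Bufetov_funct}), a full-measure but proper subset of $A$, and its continuity is asserted only on each $DC(L)$ separately. The lemma, however, asks for a \emph{continuous} function $\theta_H$ on \emph{all} of $A$; your candidate simply does not exist off the Diophantine set, and there is no mechanism in your outline to extend it continuously. In the paper the logic runs the other way: $\theta_H$ is constructed directly in the Schr\"odinger model via the explicit function $\chi(\hat u)=(e^{i\hat u}-1)/(i\hat u)\in L^2(\R)$, the $L^2$ convergence is obtained by dominated convergence (using only that $\hat f$ is continuous, which is why $s>1/2$ suffices), and the constant norm is immediate from unitary equivalence, $\Vert\theta_H(a)\Vert_{L^2(M)}=\Vert\chi\Vert_{L^2(\R)}$. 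Only afterwards (Corollary~\ref{cor:buf_theta}) is it shown that $\beta_H(a,\cdot,1)=\theta_H(a)$ for \emph{almost every} $\bar a$ with respect to invariant measures on $DC(L)$; this identification is a consequence of the lemma, not a route to proving it.

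There is a second, independent gap in the regularity range. The asymptotic estimate~\eqref{appro} you invoke requires $s>7/2$ (the remainder lemma uses the cohomological equation plus a $C^1$-type Sobolev embedding), and inspecting the proof does not bring this down to $s>1/2$ on a single component. The paper's representation-theoretic argument bypasses the cohomological equation entirely, which is what makes the sharp exponent $s>1/2$ accessible. Finally, you yourself note that the constant-norm statement ultimately forces a Stone--von~Neumann computation; but that computation \emph{is} the paper's proof, so at that point your argument collapses into it rather than providing an alternative.
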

\begin{proof}
We refer the reader to~\cite{FlaFo} or \cite{Fo2} for background on the application of this theory to the cohomological equation of Heisenberg nilflows. By the Stone-Von Neumann theorem, the space $H:=H_z$ is unitarily equivalent to the space $L^2(\R, du)$ and such a unitary equivalence can be chosen so that the group $\phi^X_\R$ is represented as the group of translations on the real line and the group $\phi^Y_\R$ is a group of the form $\{e^{\imath z t} \text{Id}\}$. In other terms we have the infinitesimal representation
$$
X\to \frac{d}{du} \,, \qquad   Y \to \imath  z u\,.
$$
The space of smooth vectors (for a irreducible unitary representation of central parameter $z\not=0$) is the Schwartz space $\mathcal S(\R)$ and the space of translation invariant tempered distribution on the real line is given by all scalar multiples of the Lebesgue measure. It follows that  invariant distributions 
supported on $H$ for the flow $\phi^X_\R$  are represented as scalar multiples of the Lebesgue measure.  Finally the Sobolev space $W^s_a(H)$ is represented
as the space ${\mathcal S}^s(\R)$ of functions $ f \in L^2(\R, du)$ such that
$$
\int_\R  \vert  (1+ \frac{d^2}{du^2} + z^2 u^2)^{s/2} \hat f(u) \vert^2 du  \, <\, +\infty\,.
$$
The statement is equivalent to the claim that there exists $\theta(a)\in L^2(\R, du)$ such that for all $ f\in {\mathcal S}^s(\R)$ we have
$$
 \lim_{T\to +\infty}   \Vert    \frac{ 1} {T^{1/2}} \int_0^{T}  f (u+t) dt - \theta \left(g_{\log T}(a)\right) \text{Leb}(f)  \Vert_{L^2(\R,du)}  =0\,.
$$
An equivalent formulation, by the standard Fourier transform on $\R$:
$$
 \lim_{T\to +\infty}  \Vert   \frac{ 1} {T^{1/2}}  \int_0^{T}  e^{i t \hat u}  \hat f (\hat u) dt- \hat \theta\left(g_{\log T}(a)\right) \hat f(0)  \Vert_{L^2(\R,d\hat u)}  =0\,.
$$
Let $\chi \in L^2(\R, d\hat u)$ denote the function defined as
$$
\chi (\hat u) =   \frac{ e^{\imath \hat u} -1 }{i \hat u}   \,, \quad \text{ for all } \hat u \in \R\,.
$$
Let $\hat \theta (a)(\hat u) :=   \chi (\hat u)$, for all $\hat u\in \R$.   Let us compute $\theta\left(g_{\log T}(a)\right)$. By definition  $g_{\log T}(a) = (T X, T^{-1} Y, Z)$, hence
the induced representation
$$
TX \to   i T \hat u \,,  \qquad  T^{-1}  Y  \to   T^{-1} z \frac{d}{d\hat u}
$$
is intertwined to the normalized representation by the unitary equivalence $U_T:L^2(\R)\to L^2(\R)$ defined as
$$
U_T (f)(\hat u) = T^{1/2} f(T \hat u) \,,  \quad \text{ for all }  u\in \R\,.
$$
In fact, we have 
$$
i \hat u  =  U_T^{-1}  \circ  (i T \hat u )\circ U_T    \,, \quad     z  \frac{d}{d\hat u} =   U_T ^{-1} \circ (T^{-1} z \frac{d}{d\hat u} ) \circ U_T \,.
$$
It follows that, for all $a\in A$ and all $T>0$, 
$$
\hat \theta \left(g_{\log T}(a)\right)(\hat u) =  U_T (\chi) (\hat u) = T^{1/2} \chi( T\hat u)\,.
$$
The function $\theta_H(a)\in H$ is uniquely defined in representation as the unique Fourier anti-transform $\theta(a)\in L^2(\R)$ of the function $\hat \theta (a)\in L^2(\R)$. 
By its definition the function $\theta_H$ is modular, that is, it is invariant under the action of the lattice $A_\Gamma$ on $A$. As a consequence, it induces a well-defined
function on the moduli space ${\mathcal{M}} = A_\Gamma\backslash A$.
By unitary equivalence 
$$
\Vert \theta_H(a) \Vert_H =  \Vert \theta (a) \Vert_{L^2(\R)} =  \Vert \hat \theta (a) \Vert_{L^2(\R)} = \Vert   \frac{ e^{i \hat u} -1} {\imath \hat u} \Vert_{L^2(\R, d\hat u)} := C>0\,.
$$

By integration we have
$$
\begin{aligned}
 \int_0^{T}  e^{\imath \hat u t} \hat f (\hat u) dt &= T \chi (T\hat u) \hat f (u) \\ &= T \chi (T\hat u) \left(\hat f (\hat u) - \hat f(0)\right)  +  
 T^{1/2} \hat \theta\left(g_{\log T}(a)\right)(\hat u)  \hat f(0)   \,.
\end{aligned}
$$
The claim is therefore reduced to the following statement
$$
\limsup_{T\to +\infty}    \Vert    T^{1/2} \chi (T\hat u) \left(\hat f (\hat u) - \hat f(0)\right) \Vert_{L^2(\R,d\hat u)}  \,=\,0\,.
$$
Since by hypothesis $f\in \mathcal S^s(\R)$ with $s>1/2$,  the function $\hat f \in C^0(\R)$ and it is bounded, hence
$$
\Vert  T^{1/2} \chi (T\hat u) \left(\hat f (\hat u) - \hat f(0)\right) \Vert_{L^2(\R,d\hat u)}  = \Vert   \chi (v) \left(\hat f (\frac{v}{T}) - \hat f(0)\right) \Vert_{L^2(\R,dv)} \to 0\,,
$$
by the dominated convergence theorem.

\smallskip The continuous dependence of the the function $\theta_H(a)\in H$ on $a\in A$ follows from the continuous dependence of the intertwining operator
$U_a:L^2(\R) \to L^2(\R)$, which conjugates a representation of the form
$$
X_a  \to   \alpha \frac{d}{du} + \imath {\gamma z }u + v \,, \qquad  Y_a \to  \beta \frac{d}{du} + \imath \delta z  u + w
$$
to the standard representation, on the parameters
$$
\begin{pmatrix} \alpha & \beta  \\ \gamma & \delta   \end{pmatrix}  \in SL(2, \R) \quad \text{ and }  \quad (v,w) \in \R^2\,,
$$
of the automorphism $a\in A$.  The intertwining operator $U_a$ can be computed explicitly. The details are left to the reader.

\end{proof}

\begin{corollary} 
\label{cor:L2lim}
 There exists a constant $C>0$ such that,  for any  $s>1/2$, for any $a=(X,Y,Z)\in A$ and for any  $f  \in W^s_a(H)$,  we have
$$
\lim_{T\to +\infty}    \frac{ 1} {T^{1/2}}  \Vert \int_0^T f\circ \phi^X_t dt
\Vert_{L^2(M)} = C  \vert D_a^H (f ) \vert  \,.
$$
\end{corollary}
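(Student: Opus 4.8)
The plan is to read this off Lemma~\ref{lemma:L2lim} essentially for free, since that lemma already contains both ingredients we need: an explicit $L^2$-asymptotic for the normalized ergodic integral, and the constancy of the norm of the modular function $\theta_H$. First I would invoke Lemma~\ref{lemma:L2lim} to obtain, for the given $f\in W^s_a(H)$ with $s>1/2$, that
$$
\Bigl\Vert \frac{1}{T^{1/2}}\int_0^T f\circ\phi^X_t(\cdot)\,dt - \theta_H\bigl(g_{\log T}(a)\bigr)\,D^H_a(f)\Bigr\Vert_{L^2(M)} \longrightarrow 0 \qquad (T\to+\infty).
$$
The whole corollary is then a matter of passing from this convergence in $L^2(M)$ to a convergence of norms.

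The key steps, in order, are the following. By the reverse triangle inequality for the $L^2(M)$-norm, the quantity
$$
\Bigl\vert\; \Bigl\Vert \tfrac{1}{T^{1/2}}\!\int_0^T f\circ\phi^X_t\,dt\Bigr\Vert_{L^2(M)} - \bigl\Vert \theta_H(g_{\log T}(a))\,D^H_a(f)\bigr\Vert_{L^2(M)}\;\Bigr\vert
$$
is bounded by the left-hand side of the displayed convergence, hence tends to $0$. Next, since $D^H_a(f)$ is a fixed scalar (it depends on $a$ and $f$ but not on $T$), it factors out of the norm, giving
$$
\bigl\Vert \theta_H(g_{\log T}(a))\,D^H_a(f)\bigr\Vert_{L^2(M)} = \vert D^H_a(f)\vert\,\bigl\Vert \theta_H(g_{\log T}(a))\bigr\Vert_{L^2(M)}.
$$
Finally I would apply the constant-norm statement of Lemma~\ref{lemma:L2lim}, valid at every point of $A$ and in particular at $g_{\log T}(a)\in A$, to conclude that $\Vert \theta_H(g_{\log T}(a))\Vert_{L^2(M)} = C$ for all $T$. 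Thus the approximating quantity equals $C\,\vert D^H_a(f)\vert$ \emph{exactly} and independently of $T$, and combining this with the vanishing difference above yields
$$
\lim_{T\to+\infty}\frac{1}{T^{1/2}}\Bigl\Vert \int_0^T f\circ\phi^X_t\,dt\Bigr\Vert_{L^2(M)} = C\,\vert D^H_a(f)\vert.
$$

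There is no genuine obstacle here: all the analytic work — the existence of $\theta_H$, the $L^2$-convergence via the explicit Fourier-side computation with the function $\chi(\hat u)=(e^{\imath\hat u}-1)/(\imath\hat u)$, and the $T$-independence of $\Vert\theta_H\Vert_{L^2}$ coming from the unitary intertwiner $U_T$ — has already been carried out in Lemma~\ref{lemma:L2lim}. The only point meriting a word of care is that the scaling acts on the \emph{argument} $g_{\log T}(a)$ of $\theta_H$ while the invariant-distribution value $D^H_a(f)$ stays anchored at $a$; it is precisely the constant-norm property that makes the motion $T\mapsto\theta_H(g_{\log T}(a))$ irrelevant at the level of $L^2$-norms, so that the limit exists trivially rather than merely along subsequences.
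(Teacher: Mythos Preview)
Your proposal is correct and is exactly the intended derivation: the paper states the corollary immediately after Lemma~\ref{lemma:L2lim} without a separate proof, since it follows at once from that lemma via the reverse triangle inequality and the constant-norm property of $\theta_H$, precisely as you wrote.
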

 The above statement strengthens Lemma 15 of~\cite{AFU}. In fact, there the authors proved a slightly weaker statement for linear skew-shifts of the
torus $\T^2$, that is for maps of the form 
$$
T_{\rho, \sigma} (y,z) =   (y+\rho, z+y+ \sigma) \, , \quad \text{ for all }  (y,z)\in \T^2\,.
$$
As explained in \cite{AFU}, the minimal flow $\phi^X_\R$ has constant return time to a transverse torus with return map a linear skew-shift, that is, a map 
of the form $T_{\rho, \sigma}$ for constants $\rho\in \R \setminus \Q$ and $\sigma \in \R$. 

From Corollary~\ref{cor:L2lim}  we derive the following limit result for the $L^2$ norm of Bufetov functionals.
\begin{corollary} \label{cor:L2bounds_buf}   There exists a constant $C>0$ such that for all  irreducible components  $H\subset L^2(M)$  and
 $a\in DC$,  we have 
$$
\lim_{T\to +\infty}    \frac{ 1} {T^{1/2}}    \Vert \beta_H (a, \cdot, T) \Vert_{L^2(M)}= C \,.
$$
\end{corollary}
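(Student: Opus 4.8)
The plan is to reduce the statement to Corollary~\ref{cor:L2lim} by means of the uniform asymptotic formula, specialized to a single irreducible component. First I would note that when $f$ belongs to a single irreducible component $H=H_{i,n}$, the definition~\eqref{buf_f} of the Bufetov cocycle collapses to one term, since the normalized invariant distributions $D^{i',n'}_a$ are supported on $W^{-s}(H_{i',n'})$ and therefore annihilate every component other than their own; thus $\beta^f(a,x,T)=D^H_a(f)\,\beta_H(a,x,T)$. Feeding this into the single-component asymptotic bound~\eqref{appro} (equivalently Theorem~\ref{thm:asymptotic}) gives, for every $f\in W^s_a(H)$ with $s>7/2$ and every $(x,T)\in M\times\R^+$,
$$
\left\vert \int_0^T f\circ\phi^X_t(x)\,dt - D^H_a(f)\,\beta_H(a,x,T)\right\vert \le C''_s(1+L)\,\vert f\vert_{a,s}\,.
$$

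Next I would pass to the $L^2(M)$-norm in the variable $x$. The right-hand side above is a bound uniform in both $x\in M$ and $T>0$, and $M$ carries a probability measure, so the $L^2(M)$-norm of the difference is itself bounded by $C''_s(1+L)\vert f\vert_{a,s}$ for all $T$. Dividing by $T^{1/2}$ and letting $T\to+\infty$, the error disappears:
$$
\lim_{T\to+\infty}\frac{1}{T^{1/2}}\left\Vert \int_0^T f\circ\phi^X_t\,dt - D^H_a(f)\,\beta_H(a,\cdot,T)\right\Vert_{L^2(M)}=0\,.
$$
Since $D^H_a(f)$ is a scalar it factors out of the norm, so by the triangle inequality the two quantities $T^{-1/2}\Vert\int_0^T f\circ\phi^X_t\,dt\Vert_{L^2(M)}$ and $\vert D^H_a(f)\vert\,T^{-1/2}\Vert\beta_H(a,\cdot,T)\Vert_{L^2(M)}$ differ by a term tending to $0$; in particular one limit exists if and only if the other does, and they coincide.

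At this point Corollary~\ref{cor:L2lim} evaluates the first limit as $C\,\vert D^H_a(f)\vert$, where $C>0$ is the universal constant of Lemma~\ref{lemma:L2lim}. To finish I would fix a single test function $f\in W^s_a(H)$ ($s>7/2$) with $D^H_a(f)\neq 0$; such an $f$ exists because $D^H_a$ is a nonzero distribution (under the Stone--Von Neumann equivalence it is Lebesgue measure, which does not annihilate, e.g., a Gaussian in the Schwartz space of smooth vectors). Cancelling the common nonzero factor $\vert D^H_a(f)\vert$ then yields
$$
\lim_{T\to+\infty}\frac{1}{T^{1/2}}\Vert\beta_H(a,\cdot,T)\Vert_{L^2(M)}=C\,,
$$
with $C$ independent of $H$ and of $a\in DC$.

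I do not anticipate a genuine obstacle, as the conclusion is essentially immediate once the two cited results are combined. The only points requiring care are that the error in the asymptotic formula is \emph{uniform} in $(x,T)$, so that it survives the passage to the $L^2(M)$-norm before normalization by $T^{-1/2}$, and that the constant $C$ obtained is manifestly the same for every component $H$ and every Diophantine $a$: it is inherited unchanged from Corollary~\ref{cor:L2lim}, and ultimately equals the $L^2(\R)$-norm of the fixed function $\chi(\hat u)=(e^{\imath\hat u}-1)/(\imath\hat u)$ appearing in Lemma~\ref{lemma:L2lim}, which depends on neither $H$ nor $a$.
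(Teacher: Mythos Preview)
Your proof is correct and follows essentially the same approach as the paper's: both combine the uniform asymptotic bound~\eqref{appro} with Corollary~\ref{cor:L2lim} for a test function in $W^s_a(H)$ with nonvanishing $D^H_a$-value. The only cosmetic difference is that the paper fixes a normalized test function with $D^H_a(f^H_a)=\Vert f^H_a\Vert_s=1$ at the outset, whereas you take any $f$ with $D^H_a(f)\neq 0$ and cancel the common factor at the end.
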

\begin{proof} By the normalization of the invariant distributions in the Sobolev space $W^s(M)$ for any given $s>1/2$
the following holds. For all irreducible components~$H$ and all $a\in A$ , there exists a (non-unique) function $f^H_a \in W^s(H)$ such that 
$$
D^H_a (f^H_a) =  \Vert  f^H_a \Vert_s= 1\,.
$$ 
For all $a\in DC(L)$, and for $s>7/2$, we derive from the asymptotic formula in Theorem~\ref{thm:asymptotic} that
$$
\left\vert \int_0^{T}  f^H_a \circ \phi^X_t (x) dt - \beta_H (a, x, T)   \right\vert \leq  C_{s} (1+L)\,.
$$
The $L^2$ estimates in the statement then follow from Corollary~\ref{cor:L2lim}.
\end{proof}

\medskip
A relation between the Bufetov functionals and the above theta functionals is established below.
\begin{corollary} \label{cor:buf_theta} For any irreducible component $H\subset L^2(M)$ the following holds. For any $L>0$ and for any $g_\R$-invariant probability measure  $\mu$ supported on $DC(L) \subset \mathcal M$, we have
$$
\beta_H ( a, \cdot, 1) =  \theta_H (a) \in L^2(M)\,,  \quad \text{ for $\mu$-almost all } \bar a\in \mathcal M\,.
$$
\end{corollary}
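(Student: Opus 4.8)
The plan is to show that the two $L^2(M)$-valued functions $a\mapsto\beta_H(a,\cdot,1)$ and $a\mapsto\theta_H(a)$ become arbitrarily close in $L^2(M)$ along the forward renormalization orbit, and then to upgrade this asymptotic coincidence to an almost-everywhere identity via an invariant-measure argument. First, I fix $\bar a\in DC(L)$ and choose (as in the proof of Corollary~\ref{cor:L2bounds_buf}) a function $f\in W^s(H)$ with $s>7/2$ normalized so that $D^H_a(f)=1$; such $f$ exists because the invariant distribution $D^H_a$ is nonzero on $H$. By the single-component asymptotic formula~\eqref{appro} the pointwise bound
\[
\left| \int_0^T f\circ\phi^X_t(x)\,dt - \beta_H(a,x,T)\right|\le C''_s(1+L)\,|f|_{a,s}
\]
holds uniformly in $x\in M$ and $T>0$; since $\mathrm{vol}$ is a probability measure this is in particular an $L^2(M)$ estimate, so after dividing by $T^{1/2}$ the left-hand side tends to $0$ in $L^2(M)$. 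Comparing this with Lemma~\ref{lemma:L2lim} (again with $D^H_a(f)=1$) and rewriting the Bufetov term through the exact scaling property~\eqref{buf:scaling}, namely $T^{-1/2}\beta_H(a,\cdot,T)=\beta_H(g_{\log T}(a),\cdot,1)$, the triangle inequality yields
\[
\left\|\beta_H\big(g_{\log T}(a),\cdot,1\big)-\theta_H\big(g_{\log T}(a)\big)\right\|_{L^2(M)}\longrightarrow 0,\qquad T\to+\infty .
\]
Thus, setting $\Phi(b):=\|\beta_H(b,\cdot,1)-\theta_H(b)\|_{L^2(M)}$, I have $\Phi(g_s(\bar a))\to 0$ as $s\to+\infty$ for every $\bar a\in DC(L)$.

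Next I record that $\Phi$ descends to a bounded continuous function on $DC(L)\subset\mathcal M$. It is continuous because $a\mapsto\beta_H(a,\cdot,1)$ is continuous by Lemma~\ref{lemma:Bufetov_funct} and $\theta_H$ is continuous by Lemma~\ref{lemma:L2lim}; it is bounded because $\|\theta_H(a)\|_{L^2(M)}=C$ and, by~\eqref{eq:buf_comp_bound} with $t=1$, $\|\beta_H(a,\cdot,1)\|_{L^2(M)}\le C(2+L)$ on $DC(L)$. Finally $\Phi$ is $A_\Gamma$-invariant: the equivariance~\eqref{equivar} of $\beta_H$ and the modularity of $\theta_H$ transform both $L^2(M)$-valued functions the same way under $A_\Gamma$, which acts on $L^2(M)$ by the unitary operators induced from the embedding $A_\Gamma<\mathrm{Diff}(M)$, so the norm of the difference is $A_\Gamma$-invariant. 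Hence $\Phi$ is a well-defined continuous bounded function on $DC(L)\subseteq\mathcal M$.

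The last step is the invariant-measure argument. Since $DC(L)$ is closed (the defining integral in~\eqref{eq:DC} is lower semicontinuous in $\bar a$ by Fatou's lemma), $\mu(DC(L))=1$ forces $\mathrm{supp}\,\mu\subseteq DC(L)$; as the support of a $g_\R$-invariant measure is $g_\R$-invariant, the whole forward orbit of any $\bar a\in\mathrm{supp}\,\mu$ stays in $DC(L)$, where $\Phi\le C(2+L)+C$. Therefore, by $g_\R$-invariance of $\mu$,
\[
\int_{\mathcal M}\Phi\,d\mu=\int_{\mathcal M}\Phi\circ g_s\,d\mu\qquad\text{for every } s\ge 0,
\]
and letting $s\to+\infty$ the dominated convergence theorem applies: $\Phi\circ g_s\to 0$ pointwise on $\mathrm{supp}\,\mu$ by the first step, with the uniform bound $C(2+L)+C$. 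This gives $\int_{\mathcal M}\Phi\,d\mu=0$, so $\Phi=0$ for $\mu$-almost every $\bar a$, which is exactly the asserted identity $\beta_H(a,\cdot,1)=\theta_H(a)$ in $L^2(M)$.

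I expect the main subtlety to be the descent of $\Phi$ to $\mathcal M$: one must verify that the $A_\Gamma$-equivariance of the Bufetov section from~\eqref{equivar} matches the modularity of $\theta_H$, so that both are sections of the same Hilbert bundle over $\mathcal M$ and $\Phi$ is genuinely $A_\Gamma$-invariant (hence integrable against $\mu$). The remaining inputs — the uniform boundedness of $\Phi$ on $DC(L)$ and the closedness of $DC(L)$ needed to run dominated convergence — are comparatively routine once the asymptotic coincidence along orbits from the first step is in hand.
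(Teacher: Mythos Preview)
Your argument is correct and is a genuinely different, and cleaner, route than the paper's. Both proofs begin the same way: combining the single-component asymptotic~\eqref{appro} with Lemma~\ref{lemma:L2lim} and the scaling~\eqref{buf:scaling} to deduce that $\Phi(g_s(\bar a))\to 0$ as $s\to+\infty$ on $DC(L)$. From there the paper proceeds via Luzin's theorem (to obtain a compact set on which $\bar a\mapsto\beta_H(a,\cdot,1)$ is continuous) together with Poincar\'e recurrence: for recurrent $\bar a_0$ in that compact set one pulls the asymptotic identity back along a returning sequence $g_{t_n}(\bar a_0)\to\bar a_0$ using continuity of both $\theta_H$ and $\beta_H$. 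Your dominated-convergence argument sidesteps Luzin and recurrence entirely: invariance of $\mu$ gives $\int\Phi\,d\mu=\int\Phi\circ g_s\,d\mu$, boundedness of $\Phi$ on $DC(L)$ (from~\eqref{eq:buf_comp_bound} and $\Vert\theta_H\Vert=C$) supplies the dominating function, and the closedness of $DC(L)$ plus invariance of $\mathrm{supp}\,\mu$ keeps the whole forward orbit inside $DC(L)$ so that the pointwise limit applies. This is more economical, while the paper's argument has the virtue of producing an explicit full-measure set on which the identity holds.

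One small caveat: you invoke Lemma~\ref{lemma:Bufetov_funct} for the continuity of $a\mapsto\beta_H(a,\cdot,1)$, but that lemma asserts continuity of $\hat\beta_H(\cdot,\gamma)$ for a \emph{fixed} arc $\gamma$, whereas here the orbit arc $\gamma^{X_a}_1(x)$ depends on $a$ through $X_a$. This does not affect your proof, since the dominated-convergence step uses only measurability and boundedness of $\Phi$, not continuity; measurability is clear from the limit construction. You were also right to flag the $A_\Gamma$-invariance of $\Phi$ as the point requiring care: the equivariance~\eqref{equivar} gives $\beta_H(\alpha a,\cdot,1)=\beta_H(a,\cdot,1)\circ\alpha^{-1}$, and the modularity of $\theta_H$ must be read in the same bundle sense, so that the $L^2$-norm of their difference is genuinely $A_\Gamma$-invariant.
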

\begin{proof}
By Theorem~\ref{thm:asymptotic} and Lemma~\ref{lemma:L2lim}  we immediately derive that there exists a constant $C_\mu>0$ such that  for all $a\in \text{supp}(\mu) \subset DC(L)$, for all $T>0$ we have
\begin{equation}
\Vert \beta_H (g_{\log T}(a), \cdot, 1) - \theta_H (g_{\log T }(a))\Vert_{L^2(M)} \leq   \frac{C_\mu}{T^{1/2}} \,.
\end{equation}
By Luzin's theorem, for any $\delta>0$ there exists a compact subset $\mathcal E(\delta)\subset \mathcal M$ such that we have the measure bound $\mu (\mathcal M \setminus \mathcal E(\delta)) < \delta$ and such that the function  $\beta_H (a, \cdot, 1)\in L^2(M)$ depends continuously on $\bar a\in \mathcal E(\delta)$. By Poincar\'e recurrence there is a full measure subset ${\mathcal E}'(\delta) \subset \mathcal E(\delta)$ of $g_\R$-recurrent points. In particular, for every $\bar a_0\in {\mathcal E}'(\delta)$ there is a diverging sequence $(t_n)$  such that $\{ g_{t_n} (\bar a_0) \} \subset \mathcal E(\delta)$ with
$$
\lim_{n\to \infty} g_{t_n} (\bar a_0) = \bar a_0\,.
$$
By the continuity of the function $\theta_H:\mathcal M \to L^2(M)$, by the continuity at $\bar a_0$ of the function $\beta_H (\bar a, \cdot, 1)\in L^2(M)$ on the set $\mathcal E(\delta)$, and by the above $L^2$ estimate, we have
$$
\Vert  \beta_H (\bar a_0, \cdot, 1)- \theta_H(\bar a_0) \Vert_{L^2(M)}  = \lim_{n\to \infty}  \Vert \beta_H (g_{t_n}(\bar a_0), \cdot, 1)- \theta_H(g_{t_n}(\bar a_0)) \Vert_{L^2(M)}=0\,.
$$
We have thus proved that $\beta_H (\bar a, \cdot, 1)= \theta_H(a) \in L^2(M)$ for all $a\in {\mathcal E}'(\delta)$. 
It follows that the set where the latter identity fails has $\mu$-measure less than any $\delta >0$, hence the identity holds for $\mu$-almost all $\bar a\in A$, as stated.

\end{proof}

\begin{corollary} \label{cor:L2bounds_buf_bis} There exists a constant $C>0$ such that for all irreducible components $H\subset L^2(M)$ the following holds.
For any $L>0$ and for any $g_\R$-invariant probability measure  $\mu$ supported on $DC(L) \subset \mathcal M$, and for all $T>0$ we have
$$
\Vert \beta_H (a, \cdot, T) \Vert_{L^2(M)} = C T^{1/2}  \quad \text{ for $\mu$-almost all } \bar a \in \mathcal M\,.
$$
\end{corollary}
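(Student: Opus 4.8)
The plan is to combine the exact scaling property of the Bufetov functional with the limit identity established in Corollary~\ref{cor:L2bounds_buf_bis}'s predecessors, namely Corollary~\ref{cor:L2bounds_buf} and Corollary~\ref{cor:buf_theta}. The key observation is that the scaling property~\eqref{buf:scaling} reads $\beta_H(a, \cdot, T) = T^{1/2} \beta_H(g_{\log T}(a), \cdot, 1)$, so that the $L^2$ norm in question factors exactly as $\Vert \beta_H(a, \cdot, T)\Vert_{L^2(M)} = T^{1/2} \Vert \beta_H(g_{\log T}(a), \cdot, 1)\Vert_{L^2(M)}$. Thus it suffices to prove that the function $\bar a \mapsto \Vert \beta_H(\bar a, \cdot, 1)\Vert_{L^2(M)}$ is $\mu$-almost everywhere equal to the constant $C$, since then the displayed identity follows for all $T>0$ by applying the $g_\R$-invariance of $\mu$.

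First I would invoke Corollary~\ref{cor:buf_theta}, which asserts that for $\mu$-almost every $\bar a \in \mathcal M$ we have the pointwise identity $\beta_H(a, \cdot, 1) = \theta_H(a)$ in $L^2(M)$. Taking $L^2$ norms on both sides and using the second part of Lemma~\ref{lemma:L2lim}, which gives $\Vert \theta_H(a)\Vert_{L^2(M)} = C$ with $C$ a positive constant independent of $a \in A$, I conclude that $\Vert \beta_H(a, \cdot, 1)\Vert_{L^2(M)} = C$ for $\mu$-almost all $\bar a \in \mathcal M$. Substituting this into the scaling identity above yields $\Vert \beta_H(a, \cdot, T)\Vert_{L^2(M)} = C T^{1/2}$ for $\mu$-almost all $\bar a$, for each fixed $T>0$.

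One point requiring a small amount of care is the interchange of the almost-everywhere statement with the scaling: since $\mu$ is $g_\R$-invariant, the set of full $\mu$-measure on which the identity $\beta_H(\cdot, \cdot, 1) = \theta_H(\cdot)$ holds can be taken $g_\R$-invariant (by intersecting over the orbit, or equivalently using that $g_{\log T}$ pushes the full-measure set to another full-measure set). This guarantees that the exceptional null set does not depend on $T$ in a way that would obstruct the conclusion, though in fact the statement as phrased fixes $T$ and quantifies the almost-everywhere clause afterward, so even the pointwise argument at a single $T$ suffices.

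The main obstacle, such as it is, lies entirely in the already-established Corollary~\ref{cor:buf_theta}: identifying the Bufetov functional at time $1$ with the theta functional $\theta_H$ pointwise $\mu$-almost everywhere. That identification rests on a Luzin-regularity and Poincar\'e-recurrence argument exploiting the continuity of $\theta_H$ and the continuous dependence of $\beta_H(\bar a, \cdot, 1)$ on $\bar a$ along the recurrent orbit. Once that identity is in hand, the present corollary is an essentially formal consequence of homogeneity under the scaling property and the constancy of $\Vert \theta_H(a)\Vert_{L^2(M)}$, with no further analytic input required.
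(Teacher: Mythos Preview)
Your proposal is correct and matches the paper's intended argument: the paper states this result as an immediate corollary of Corollary~\ref{cor:buf_theta} (the identification $\beta_H(a,\cdot,1)=\theta_H(a)$ for $\mu$-almost all $\bar a$), the constancy $\Vert\theta_H(a)\Vert_{L^2(M)}=C$ from Lemma~\ref{lemma:L2lim}, and the exact scaling~\eqref{buf:scaling}, combined via the $g_\R$-invariance of~$\mu$. No separate proof is given in the paper, and your reconstruction is precisely the implied one.
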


\begin{remark} Since by Lemma~\ref{lemma:L2lim}  the function $\theta_H: A\to L^2(M) $ is continuous and approximates ergodic integrals, 
it is possible to write it (at least for the first irreducible component) in terms  of  the theta function $\Theta_\chi$  introduced in \cite{GM}, as both functions are 
continuous, modular, and provide a similar asymptotic formula for ergodic averages (sums)   (see  formula (13) in~\cite{GM}).  It  follows that Bufetov functional
$\beta_H$ (for the first irreducible component) essentially coincide with the function  $\Theta_\chi$ almost everywhere on the moduli space.  The main advantage
of the approach of Cellarosi and Marklof \cite{CM} is that it provides an explicit Diophantine condition which describes the set where the function $\Theta_\chi$ is
absolutely convergent and $1/2$-H\"older (see \cite{CM}, Theorem 3.10).
\end{remark}

\smallskip
For general smooth functions we proceed as in the previous section. Since any function $f\in W^s(M)$ has a decomposition 
$$
f= \sum_{n\in \Z} \sum_{i=1}^{\mu(n)}  f_{i,n} 
$$
where each component $f_{i,n} \in W^s(H_{i,n})$, we can define the functional $\theta^f:A \to L^2(M)$ associated to the function $f\in W^s(M)$ as the weighted sum
over all functionals $\theta^{i,n} :=\theta_{H_{i,n}}$ associated to the irreducible representations $H_{i,n}$:
\begin{equation}
\label{theta_f}
\theta^f (a) :=   \sum_{n\in \Z} \sum_{i=1}^{\mu(n)}  D^{i,n}_a (f) \theta^{i,n}(a)\,.
\end{equation}
\begin{lemma}  \label{lemma:theta_convergence} For all  $a \in A$ and  for all function $f\in W^s(M)$ for $s>2$, the  functional  $\theta^f$ is 
defined by a convergent series, hence  the function $\theta^f (a)$ is an $L^2$ function  on $M$ and $\theta^f: A \to L^2(M)$ is a continuous
function.
\end{lemma}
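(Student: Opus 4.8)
The plan is to follow the scheme of Lemma~\ref{lemma:convergence}, but to replace the time-dynamics estimates used there by the orthogonality of the irreducible decomposition, which makes the argument in the static Hilbert space $L^2(M)$ considerably cleaner. The starting point is that, by Lemma~\ref{lemma:L2lim}, for every irreducible component $H_{i,n}$ the functional $\theta^{i,n}(a)$ lies in $H_{i,n}$ and has $L^2$-norm equal to the \emph{same} constant $C>0$, independent of $a\in A$, of $n\in\Z\setminus\{0\}$ and of $i\in\{1,\dots,\mu(n)\}$. Since the subspaces $H_{i,n}$ are mutually orthogonal in $L^2(M)$, the terms $D^{i,n}_a(f)\,\theta^{i,n}(a)$ of the series~\eqref{theta_f} are mutually orthogonal, so that the $L^2$-convergence of the series reduces to the summability of the scalar series $\sum_{n,i}\vert D^{i,n}_a(f)\vert^2$, and the partial sums satisfy the Pythagorean identity
$$
\Big\Vert \sum_{n,i} D^{i,n}_a(f)\,\theta^{i,n}(a)\Big\Vert_{L^2(M)}^2 = C^2 \sum_{n,i} \vert D^{i,n}_a(f)\vert^2 \,.
$$

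First I would bound each coefficient by the Sobolev norm of the corresponding component. Since, for $s>1/2$, the normalized invariant distribution $D^{i,n}_a$ belongs to the dual Sobolev space $W^{-s}_a(H_{i,n})$ with a norm bounded uniformly in $n$ (the Lebesgue functional of the Stone--Von Neumann model has dual Sobolev norm which does not grow with the central parameter), there is a constant $C_s>0$, locally uniform in $a$, with $\vert D^{i,n}_a(f)\vert \le C_s \vert f_{i,n}\vert_{a,s}$. Summing over all components and using that $(1+\Delta_a)^{s/2}$ is diagonal with respect to the decomposition $L^2(M)=\bigoplus_{n,i} H_{i,n}$, one gets
$$
\Vert \theta^f(a)\Vert_{L^2(M)}^2 \le C^2 C_s^2 \sum_{n,i} \vert f_{i,n}\vert_{a,s}^2 = C^2 C_s^2 \,\vert f\vert_{a,s}^2 < +\infty \,,
$$
which yields both the convergence of the series and the inclusion $\theta^f(a)\in L^2(M)$. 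The threshold $s>2$ in the statement matches the one needed in Lemma~\ref{lemma:convergence}, where the multiplicities $\mu(n)=\vert n\vert$ must be summed in $\ell^1$; the orthogonality exploited here makes the analogous requirement for $\theta^f$ considerably weaker.

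For the continuity of $\theta^f\colon A\to L^2(M)$ I would argue by locally uniform convergence. Each finite partial sum $\sum_{\vert n\vert\le N}\sum_i D^{i,n}_a(f)\,\theta^{i,n}(a)$ is continuous in $a$, since $a\mapsto\theta^{i,n}(a)$ is continuous by Lemma~\ref{lemma:L2lim} and $a\mapsto D^{i,n}_a(f)$ is continuous (continuous dependence on $a$ of the intertwining operators of the Stone--Von Neumann model). On any compact set $K\subset A$ the frames, hence the Laplacians $\Delta_a$, are uniformly comparable, so that $\vert f_{i,n}\vert_{a,s}^2\le C_K\vert f_{i,n}\vert_{a_0,s}^2$ uniformly in $a\in K$ and in each component; the tail estimate
$$
\Big\Vert \sum_{\vert n\vert> N}\sum_i D^{i,n}_a(f)\,\theta^{i,n}(a)\Big\Vert_{L^2(M)}^2 \le C^2 C_s^2 \, C_K \sum_{\vert n\vert> N}\sum_i \vert f_{i,n}\vert_{a_0,s}^2
$$
then tends to $0$ as $N\to+\infty$, uniformly for $a\in K$. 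Hence $\theta^f$ is a locally uniform limit of continuous functions, and is therefore continuous.

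I expect the only delicate points to be the two uniformity claims underlying these estimates: the uniformity in the central parameter $n$ of the dual Sobolev norm of the normalized invariant distributions $D^{i,n}_a$, and the uniform comparability over compact sets of the frame-dependent Sobolev norms needed to upgrade pointwise convergence to locally uniform convergence. Both follow from the explicit Stone--Von Neumann model together with the continuous dependence on $a$ of the intertwining operators worked out in Lemma~\ref{lemma:L2lim}, but they are the steps that must be checked with care, whereas the orthogonal summation itself is automatic.
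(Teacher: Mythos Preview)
Your argument is correct. The paper itself gives no proof of this lemma; after stating it together with Theorem~\ref{thm:theta_asymptotic} the authors write ``We leave to the reader the derivation of Lemma~\ref{lemma:theta_convergence} and Theorem~\ref{thm:theta_asymptotic}.'' The intended derivation is clearly to mimic Lemma~\ref{lemma:convergence}: bound $\Vert\theta^f(a)\Vert_{L^2}$ by the triangle inequality as $C\sum_{n,i}\vert D^{i,n}_a(f)\vert$, then convert the $\ell^1$-sum into an $\ell^2$-sum by the Cauchy--Schwarz manoeuvre with weight $(1+n^2)^{-r'}$, which is what forces $s>2$.

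Your route differs at the first step: you exploit that, by Lemma~\ref{lemma:L2lim}, $\theta^{i,n}(a)\in H_{i,n}$ with $\Vert\theta^{i,n}(a)\Vert_{L^2}=C$ for a \emph{universal} constant (indeed the proof of that lemma gives $C=\Vert(e^{i\hat u}-1)/(i\hat u)\Vert_{L^2(\R)}$, independent of $H$), so Pythagoras replaces the $\ell^1$-sum by $C^2\sum_{n,i}\vert D^{i,n}_a(f)\vert^2$. This is cleaner, and together with the uniform bound $\vert D^{i,n}_a(f_{i,n})\vert\le C_s\vert f_{i,n}\vert_{a,s}$ (already used implicitly in the proof of Lemma~\ref{lemma:convergence}) it would actually give convergence for any $s>1/2$, stronger than the stated $s>2$. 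The continuity argument by locally uniform convergence of partial sums is the same as what the paper's scheme would give. The two ``delicate points'' you flag are real but are exactly the ingredients the paper already relies on in Section~\ref{sec:Main_properties}, so no new difficulty arises.
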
 
From Lemma~\ref{lemma:L2lim} we derive a general asymptotic theorem:

\begin{theorem} \label{thm:theta_asymptotic} For all $a\in A$ and for all $f\in W^s(M)$ with $s>2$ we have
$$
 \lim_{T\to +\infty}   \Vert  \frac{ 1} {T^{1/2}}  \int_0^{T}  f\circ \phi^X_t dt - \theta^f\left(g_{\log T}(a)\right)  \Vert_{L^2(M)}  =0\,.
$$
\end{theorem}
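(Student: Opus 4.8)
The plan is to reduce the statement to the single-component asymptotics of Lemma~\ref{lemma:L2lim} by decomposing $f$ along the irreducible components, exploiting orthogonality, and then passing to the limit term by term via dominated convergence. Write $f=\sum_{n\in\Z}\sum_{i=1}^{\mu(n)}f_{i,n}$ with $f_{i,n}\in W^s(H_{i,n})$. Since the nilflow $\phi^X_\R$ acts through the quasi-regular representation $\mathcal U$, each irreducible subspace $H_{i,n}$ is $\phi^X_\R$-invariant; moreover $\theta^{i,n}(a)\in H_{i,n}$ by Lemma~\ref{lemma:L2lim}, and $D^{i,n}_a(f)=D^{i,n}_a(f_{i,n})$ since the invariant distribution $D^{i,n}_a$ is supported on $H_{i,n}$. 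Hence both $\frac{1}{T^{1/2}}\int_0^T f\circ\phi^X_t\,dt$ and $\theta^f(g_{\log T}(a))=\sum_{n,i}D^{i,n}_a(f)\,\theta^{i,n}(g_{\log T}(a))$ are compatible with the orthogonal splitting $L^2(M)=\bigoplus_{n,i}H_{i,n}$.

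First I would use this orthogonality, via the Pythagorean theorem, to split the squared norm of the difference into a sum over components:
\begin{equation*}
\Big\Vert \frac{1}{T^{1/2}}\int_0^T f\circ\phi^X_t\,dt - \theta^f(g_{\log T}(a))\Big\Vert_{L^2(M)}^2 = \sum_{n\in\Z}\sum_{i=1}^{\mu(n)} \Big\Vert \frac{1}{T^{1/2}}\int_0^T f_{i,n}\circ\phi^X_t\,dt - D^{i,n}_a(f)\,\theta^{i,n}(g_{\log T}(a))\Big\Vert_{L^2(M)}^2.
\end{equation*}
By Lemma~\ref{lemma:L2lim} applied to each component $H_{i,n}$ (which requires only $s>1/2$), every summand tends to $0$ as $T\to+\infty$. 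It therefore remains to justify interchanging the limit $T\to+\infty$ with the summation over $(i,n)$.

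For this I would produce a dominating bound, uniform in $T$ and summable in $(i,n)$. In the Stone--Von Neumann model used in the proof of Lemma~\ref{lemma:L2lim}, the normalized ergodic integral of $f_{i,n}$ is represented by $T^{1/2}\chi(T\hat u)\hat f_{i,n}(\hat u)$, which yields the exact identity
\begin{equation*}
\frac{1}{T^{1/2}}\Big\Vert \int_0^T f_{i,n}\circ\phi^X_t\,dt\Big\Vert_{L^2(M)}^2 = \int_\R |\chi(v)|^2\,|\hat f_{i,n}(v/T)|^2\,dv \le \Vert\chi\Vert_{L^2}^2\,\Vert \hat f_{i,n}\Vert_{\infty}^2,
\end{equation*}
a bound independent of $T>0$. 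Combining this with the uniform constant-norm assertion $\Vert\theta^{i,n}(g_{\log T}(a))\Vert_{L^2(M)}=C$ from Lemma~\ref{lemma:L2lim} and with $|D^{i,n}_a(f)|\le |f_{i,n}|_{a,r}$, the triangle inequality bounds the $(i,n)$-th summand by a quantity $G_{i,n}$ depending only on a fixed Sobolev norm of $f_{i,n}$ and on the central parameter $n$ through the embedding constant $\Vert\hat f_{i,n}\Vert_{\infty}\le C_n|f_{i,n}|_{a,r}$. I would then sum over $(i,n)$ by running exactly the Cauchy--Schwarz argument of Lemma~\ref{lemma:convergence}: inserting the weight $(1+n^2)^{-r'}$ and compensating with the central operator $(1-Z^2)^{r'/2}$ (which acts on $H_{i,n}$ as multiplication by $1+(2\pi nK)^2$) absorbs the multiplicity $\mu(n)=|n|$ and gives $\sum_{n,i}G_{i,n}\le C_s|f|_{a,s}^2<+\infty$ precisely for $s>2$, consistent with Lemma~\ref{lemma:theta_convergence}. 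Dominated convergence on the counting measure over $(i,n)$ then allows the limit to pass inside the sum, concluding the proof.

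The main obstacle is the last paragraph: obtaining a dominating bound that is \emph{simultaneously} uniform in $T$ and summable over the infinitely many irreducible components. The uniformity in $T$ is delicate because the individual ergodic integrals grow like $T^{1/2}$, so the $T^{-1/2}$ normalization is exactly critical; the summability is delicate because the embedding constants $C_n$ degenerate as $|n|\to+\infty$ while the multiplicity $\mu(n)=|n|$ grows, and only the extra regularity encoded in $s>2$ (extracted through the central operator $Z$) renders the two effects compatible.
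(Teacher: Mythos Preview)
Your approach is correct and is exactly what the paper has in mind: the paper explicitly leaves the derivation of Theorem~\ref{thm:theta_asymptotic} (and Lemma~\ref{lemma:theta_convergence}) to the reader, and the orthogonal decomposition combined with Lemma~\ref{lemma:L2lim} on each component plus a dominated-convergence argument is the intended route.

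Two small comments on the execution. First, your concern that the embedding constants $C_n$ in $\Vert\hat f_{i,n}\Vert_\infty\le C_n|f_{i,n}|_{a,r}$ \emph{degenerate} as $|n|\to\infty$ is unfounded: in the Schr\"odinger model one has $\Vert\hat g\Vert_\infty\le\Vert g\Vert_{L^1}\le\pi^{1/2}\bigl(\Vert g\Vert_0^2+\Vert u g\Vert_0^2\bigr)^{1/2}$, and since $Y$ is represented by $2\pi i nK u$, the term $\Vert u g\Vert_0=(2\pi|n|K)^{-1}\Vert Y g\Vert_0$ is \emph{better}, not worse, for large $|n|$; hence $C_n$ is bounded uniformly in $n\neq 0$. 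This means that your dominating sequence $G_{i,n}$ is in fact controlled directly by $\sum_{n,i}|f_{i,n}|_{a,r}^2=|f|_{a,r}^2$ (using the orthogonality of the Sobolev decomposition), and you do not actually need to invoke the Cauchy--Schwarz trick of Lemma~\ref{lemma:convergence}. The latter is designed to control $\sum_{n,i}|f_{i,n}|_{a,r}$ (a sum of norms, which is what appears for Bufetov functionals), whereas here you only need the sum of squares. Second, the decomposition $f=\sum_{n,i}f_{i,n}$ with $\mu(0)=0$ tacitly drops the $H_0$ component; you should note that for this component the normalized ergodic integral tends to zero in $L^2$ (under the standing ergodicity of the flow on the base torus), matching the absence of an $n=0$ term in $\theta^f$. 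The paper's formulation has the same tacit convention.
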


We leave to the reader the derivation of Lemma~\ref{lemma:theta_convergence}  and Theorem~\ref{thm:theta_asymptotic}. 

\smallskip
From Theorem~\ref{thm:theta_asymptotic} we can derive most of a result of Griffin and Marklof~\cite{GM} on limit distribution of theta sums
in the related context of Heisenberg nilflows. We also prove the result established later by Cellarosi and Marklof \cite{CM} (see in particular \cite{CM}, \S 3)  that limit 
distributions are the distributions of H\"older function of exponent equal to $1/2-$.  Our results have the advantage of holding  for all sufficiently smooth functions, while 
the work of Griffin and  Marklof~\cite{GM}, and Cellarosi and Marklof~\cite{CM} holds only for a single (toral) character. However,  they are much less explicit and less detailed, especially as far as the  the behavior at infinity in the moduli space is concerned,  and in particular we have no results on limit distributions for time sequences corresponding
to escape in the cusp of the moduli space.

\smallskip
The following result summarizes our results on limit distributions of ergodic averages of sufficiently smooth functions for Heisenberg nilflows:
\begin{theorem}  \label{thm:lim_dist} 
Let  $a=(X,Y,Z) \in A$  and let $(T_n)$ be any sequence such that 
$$
\lim_{n\to+\infty} g_{\log T_n}  (\bar a)  =  a_\infty \in \mathcal M \,.
$$
For every zero average function $f\in W^s(M)$ with $s>7/2$ which is not a coboundary,  the limit distribution 
 of the family of random variables 
$$
E_{T_n} (f) := \frac{1}{T_n^{1/2}}  \int_0^{T_n} f \circ \phi^X_t (\cdot) dt 
$$
exists and is equal to the distribution of the function $\theta^f (a_\infty)\in L^2(M)$. In particular, if $a_{\infty}\in DC$ belongs to  the $\omega$-limit set  of 
any $g_\R$-orbit  on the set  $DC$ of Diophantine points and is a continuity point of the Bufetov functional $\beta^f$ on $DC\subset A$, then $\theta^f (a_\infty)$ is  
almost everywhere equal to a bounded $\frac{1}{2}$-H\"older function on $M$, hence in particular the limit distribution has compact
support.
\end{theorem}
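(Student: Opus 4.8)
\emph{Proof plan.} The strategy is to transport all the analytic information through the two asymptotic theorems and then to use continuity in the moduli space. For the existence and identification of the limit distribution I would argue as follows. Set $E_T(f) = T^{-1/2}\int_0^T f\circ\phi^X_t(\cdot)\,dt$ and $\Theta_T = \theta^f(g_{\log T}(a)) \in L^2(M)$. By Theorem~\ref{thm:theta_asymptotic} one has $\|E_T(f) - \Theta_T\|_{L^2(M)} \to 0$, and since convergence in $L^2(M,\text{vol})$ implies convergence in probability, the difference $E_{T_n}(f)-\Theta_{T_n}$ tends to $0$ in probability, so the limit law of $E_{T_n}(f)$ coincides with that of $\Theta_{T_n}$ whenever the latter exists. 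By Lemma~\ref{lemma:L2lim} the component functionals $\theta_H$ are modular and the $A_\Gamma$-action on $M$ is volume preserving, hence the distribution of $\Theta_T$ depends only on the projection $g_{\log T}(\bar a)\in\mathcal M$. Choosing representatives $a_n\in A$ of $g_{\log T_n}(\bar a)$ converging to a lift $\tilde a_\infty$ of $a_\infty$, the continuity of $\theta^f$ (Lemma~\ref{lemma:theta_convergence}) gives $\theta^f(a_n)\to\theta^f(\tilde a_\infty)$ in $L^2(M)$, and hence convergence in distribution; by modularity the resulting law is independent of the lift, so the limit law of $E_{T_n}(f)$ equals the law of $\theta^f(a_\infty)$. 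The non-coboundary assumption guarantees some $D^{i,n}_{a_\infty}(f)\neq 0$, so this law is nondegenerate.

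For the regularity statement the heart of the matter is to identify $\theta^f(a_\infty)$ with the Bufetov functional $\beta^f(a_\infty,\cdot,1)$. First I would record, for any $b\in DC(L)$, the $L^2$ asymptotic
$$
\lim_{s\to+\infty}\big\|\beta^f(g_s(b),\cdot,1) - \theta^f(g_s(b))\big\|_{L^2(M)} = 0 .
$$
Indeed, with $T = e^s$ the exact scaling~\eqref{eq:scaling_general} gives $\beta^f(g_s(b),\cdot,1) = T^{-1/2}\beta^f(b,\cdot,T)$, so Theorem~\ref{thm:asymptotic} shows $\beta^f(g_s(b),\cdot,1)$ differs from $E_T(f)$ by $O(T^{-1/2}(1+L))$, while Theorem~\ref{thm:theta_asymptotic} shows $E_T(f)\to\theta^f(g_s(b))$ in $L^2$. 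Writing $a_\infty$ as an $\omega$-limit $a_\infty=\lim_k g_{s_k}(b)$ along an orbit contained in $DC$, I would then pass to the limit in $k$: continuity of $\theta^f$ gives $\theta^f(g_{s_k}(b))\to\theta^f(a_\infty)$, the continuity of $\beta^f$ at $a_\infty$ (the hypothesis) gives $\beta^f(g_{s_k}(b),\cdot,1)\to\beta^f(a_\infty,\cdot,1)$, and the vanishing of the difference forces $\theta^f(a_\infty)=\beta^f(a_\infty,\cdot,1)$ in $L^2(M)$. This is precisely the recurrence-and-continuity mechanism already used in Corollary~\ref{cor:buf_theta}.

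To conclude, since $a_\infty\in DC$, Lemma~\ref{lemma:convergence} guarantees that $\beta^f(a_\infty,\cdot,\cdot)$ is a genuine H\"older function on $M\times\R$, and the H\"older property of the functionals (exponent $1/2-$) makes $\beta^f(a_\infty,\cdot,1)$ a bounded $\tfrac12$-H\"older function on $M$. Combined with the identification above, $\theta^f(a_\infty)$ is almost everywhere equal to this bounded H\"older function; a bounded function has bounded range, so its law---the limit distribution of $E_{T_n}(f)$---has compact support, as claimed.

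I expect the main obstacle to lie not in any single estimate but in the two transfer steps across the moduli space. In part one one must make precise that the law of $\Theta_T$ really descends to $\mathcal M$, so that convergence $g_{\log T_n}(\bar a)\to a_\infty$ in the orbifold upgrades to $L^2$-convergence of $\theta^f$ of suitable representatives; this rests on the modularity in Lemma~\ref{lemma:L2lim} and on the volume-preserving lattice action. In part two the delicate point is that the almost-everywhere identity $\beta^f=\theta^f$ must be promoted to the single point $a_\infty$: this is exactly why the two hypotheses (that $a_\infty$ lie in the $\omega$-limit set of a $g_\R$-orbit inside $DC$, and be a continuity point of $\beta^f$) are imposed, and checking that these hypotheses indeed license the passage to the limit is where the care is needed. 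All the analytic inputs---the two asymptotic theorems, the H\"older bound, and the continuity of $\theta^f$ and $\beta^f$---are already available from the preceding sections.
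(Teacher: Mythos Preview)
Your proposal is correct and follows the paper's approach. The paper's own proof is extremely terse---it simply cites Lemma~\ref{lemma:theta_convergence} and Theorem~\ref{thm:theta_asymptotic} for the first part and Corollary~\ref{cor:buf_theta} for the regularity---and your write-up is a faithful unpacking of those citations. In particular, for the regularity statement the paper's bare reference to Corollary~\ref{cor:buf_theta} does not by itself pin down the identity $\theta^f(a_\infty)=\beta^f(a_\infty,\cdot,1)$ at the \emph{specific} point $a_\infty$; your direct argument (pass to the limit along $g_{s_k}(b)\to a_\infty$ using the $L^2$ asymptotic $\|\beta^f(g_s(b),\cdot,1)-\theta^f(g_s(b))\|\to 0$, continuity of $\theta^f$, and the assumed continuity of $\beta^f$ at $a_\infty$) is exactly the recurrence-and-continuity mechanism from the proof of Corollary~\ref{cor:buf_theta}, now specialized to the hypotheses that the theorem provides in lieu of Luzin and Poincar\'e recurrence. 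So there is no genuine difference in method, only in level of detail.
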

\begin{proof} Since $a_\infty \in \mathcal M$, the existence and characterization of the limits follows from Lemma~\ref{lemma:theta_convergence}  and Theorem~\ref{thm:theta_asymptotic}. The regularity statement follows from Corollary~\ref{cor:buf_theta}.  \end{proof}

With the exception of the H\"older regularity statement, equivalent results were proved in \cite{GM} for linear toral skew-shift and for function cohomologous to the
principal toral character. For such functions, the authors also investigated the case when the limit point $a_\infty =+\infty$ and proved that in that case
the limit distribution is the Dirac delta $\delta_0$ at $0\in \R$. The H\"older regularity property was proved in \cite{CM}.

\section{Square mean lower bounds} \label{sec:square_mean_bounds}

In this section we prove transverse square mean lower bound for ergodic integrals.

Let $\T^2_\Gamma$ denote the $2$-dimensional torus transverse to flow, defined as follows:
$$
\T^2_\Gamma := \{ \Gamma \exp( yY_0 +zZ) \vert  (y,z)\in \R^2\}.
$$
The torus $\T^2_\Gamma$ is transverse to the nilflow $\phi^{X_0}_\R$ on $M$, hence transverse to all nilflows $\phi^{X}_\R$
such that $<X, X_0> \not =0$.  For all $a =(X,Y,Z)$, let 
$$
t_a :=  \frac{1} {\vert <X,X_0>\vert}
$$
denote the return time of the flow $\phi^{X}_\R$ to the transverse tori $\T^2_\Gamma$. 

We will prove bounds for the square mean of ergodic integrals along the leaves of the foliation  of the 
torus $\T^2_\Gamma$ into circles transverse to the central direction:
$$
\{  \xi \exp (y Y_0)  \vert  y\in \T\}_{\xi \in \T^2_\Gamma} \,.
$$
\begin{lemma} \label{L2growth_circles} There exists a constant $C>0$ such that  for all $a=(X,Y,Z)$  and for every
irreducible component $H$ of central parameter $n\not =0$, there exist a function $f_H \in C^\infty(H)$  such that
$$
\begin{aligned}
&\vert f_H \vert_{L^\infty(M)} \leq C t_a^{-1}  \vert D^H_a(f_H) \vert  \,,   \\  &\vert f_H \vert_{a,s} \leq   C  t_a^{-1} \vert D^H_a(f_H) \vert (1+ t_a^{-1} \Vert Y\Vert)^{s}  (1+n^2)^{s/2} \,,
\end{aligned} 
$$
and such that, for all $x\in \T^2_\Gamma$ and $T \in \Z t_a$, we have
$$
\begin{aligned}
&\vert \int_\T \left(\int_0^Tf_H(\phi_s^X(\phi^{Y_0}_y(x)))ds\right) dy \vert   \leq 1 \,,  \\
&\|\int_0^Tf_H(\phi_s^X(\phi^{Y_0}_y(x)))ds\|_{L^2(\T, dy)} =   \vert D^H_a(f_H) \vert  (\frac{T}{t_a})^{1/2}\,.
\end{aligned}
$$
In addition, whenever $H \perp H' \subset L^2(M)$ the functions
$$
\int_0^Tf_{H}(\phi_s^X(\phi^{Y_0}_y(x)))ds\quad \text { and } \quad \int_0^Tf_{H'}(\phi_s^X(\phi^{Y_0}_y(x)))ds
$$
are orthogonal in $L^2(\T, dy)$. 
\end{lemma}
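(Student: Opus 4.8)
The plan is to prove all four statements by passing to the Stone--von Neumann model of the irreducible component $H$ and realizing functions on $M$ by an explicit theta (Weil) summation. I would fix the unitary $U^H_a\colon H\to L^2(\R)$ from the Stone--von Neumann theorem, so that $X\to d/du$, $Y\to 2\pi\imath nK u$, $Z\to 2\pi\imath nK$, and $D^H_a$ corresponds to Lebesgue measure, and I would take $f_H$ to be the theta series $f_H(g)=\sum_{k\in\Z}(\pi_{nK}(g)F)(k)$ attached to a smooth bump $F\in C_c^\infty((0,1))$ supported in a single unit interval. This $f_H$ is smooth, lies in $H$, and satisfies $U^H_{a_0}(f_H)=F$ up to the frame--independent normalization of the theta map; the key point of the single--interval support is that it makes the Parseval sum below collapse \emph{exactly}.

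Next I would reduce the geometry to a normal form. Using $A_\Gamma$--equivariance \eqref{equivar} and the renormalization flow $g_\R$ (whose $SL(2,\R)$ part is exactly the frame rescaling) I would bring $a$ to a frame with $X$ proportional to $X_0$; the residual off--diagonal and central--shift parts of the automorphism contribute only bounded modulations and phases, carried through the same representation computation. In this normal form $\phi^X_s(\phi^{Y_0}_y(x))=\Gamma(s/t_a,\,y_0+y,\,z_0)$ for $x=\Gamma(0,y_0,z_0)$, so the theta formula gives
$$
G_H(y):=\int_0^T f_H(\phi^X_s(\phi^{Y_0}_y(x)))\,ds = t_a\,e^{2\pi\imath nK z_0}\sum_{k\in\Z} e^{2\pi\imath nK k (y_0+y)}\,c_k\,,\qquad c_k:=\int_k^{\,k+T/t_a}F\,,
$$
a Fourier series in $y\in\T$ with frequencies in $nK\Z$. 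The circle mean is the single term $k=0$, namely $t_a e^{2\pi\imath nK z_0}\int_0^{T/t_a}F$, bounded by $t_a|\int_\R F|$, which the normalization $\int_\R F=t_a^{-1}$ makes $\le 1$. By Parseval $\|G_H\|_{L^2(\T,dy)}^2=t_a^2\sum_k|c_k|^2$, and because $F$ lives in one unit interval exactly $N:=T/t_a$ of the $c_k$ are nonzero, each equal to $\int_\R F$, so $\sum_k|c_k|^2=N|\int_\R F|^2$ \emph{exactly} for every $T\in\Z t_a$; tracking the frame dependence of $D^H_a(f_H)$ through the scaling law $D^H_{g_t(a)}=e^{-t/2}D^H_a$ of Lemma~\ref{scaldist} then yields the stated identity $\|G_H\|_{L^2(\T,dy)}=|D^H_a(f_H)|(T/t_a)^{1/2}$.

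The two a priori bounds are direct estimates on $F$. For the $L^\infty$ bound only one translate $F(\cdot+k)$ contributes at each point, so $\|f_H\|_{L^\infty(M)}=\|F\|_{L^\infty(\R)}\le C|\int_\R F|$, which matches $t_a^{-1}|D^H_a(f_H)|$ after the normalization above. For the Sobolev bound I would compute $\|f_H\|_{a,s}=\|(1+\Delta_a)^{s/2}U^H_a f_H\|_{L^2(\R)}$ with $\Delta_a$ the harmonic--oscillator operator $-(d/dv)^2+(2\pi nK)^2v^2+(2\pi nK)^2$: differentiation in $v$ produces the scaling factor $t_a^{-1}$, multiplication by $v$ (that is, the vector field $Y$) produces the factor $(1+t_a^{-1}\|Y\|)^{s}$, and the central parameter produces $(1+n^2)^{s/2}$, giving exactly the claimed estimate.

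Finally, orthogonality for $H\perp H'$ follows from the Fourier description: realizing the $\mu(n)=|n|$ components of $H_n$ by theta series with offsets $\theta_i=i/n$ shifts the Fourier support of $G_H$ to the coset $Ki+nK\Z$, so distinct components of the same central parameter have disjoint frequency supports and are orthogonal in $L^2(\T,dy)$, while distinct central parameters are separated by the orthogonality of the underlying irreducibles. I expect the main obstacle to be precisely the \emph{exact} $L^2(\T)$ identity: it is what forces the single--unit--interval support of $F$ (so the Parseval sum collapses to $N|\int_\R F|^2$ with no edge defect) and demands careful bookkeeping of the frame--dependent normalization of $D^H_a$ via Lemma~\ref{scaldist}, rather than any soft convergence argument; the orthogonality statement is the second delicate point, since it hinges on assigning disjoint frequency cosets to the components through the theta offsets.
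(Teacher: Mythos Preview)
Your approach via the Stone--von Neumann model and theta series is conceptually related to the paper's, but the paper takes a more direct and robust route that avoids the frame--reduction difficulty you run into.

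The paper never passes to the Schr\"odinger model. Instead it works with the first--return map of $\phi^X_\R$ to the transverse torus $\T^2_\Gamma$, which for \emph{every} frame $a$ with $\langle X,X_0\rangle\neq 0$ is a linear skew--shift $T_{\rho,\sigma}(y,z)=(y+\rho,\,z+y+\sigma)$. It then defines $f_H:=R^\chi_a(e_{m,n})$, where $R^\chi_a$ is an explicit right inverse of the ``integrate over one return'' operator $I_a$, built from a fixed bump $\chi\in C^\infty_0(0,1)$: by construction $I_a(f_H)=e_{m,n}$ is a single toral character, so for $T\in\Z t_a$ the ergodic integral collapses to the Birkhoff sum $\sum_{j=0}^{N-1} e_{m,n}\circ T_{\rho,\sigma}^j$. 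These iterated characters have distinct $y$--frequencies $m+jKn$, hence form an orthonormal system in $L^2(\T,dy)$, giving the exact $L^2$ identity and the orthogonality between components immediately. The norm bounds come directly from the formula $R^\chi_a(e_{m,n})(\phi^X_t(\xi))=t_a^{-1}\chi(t/t_a)e_{m,n}(\xi)$, which makes the factors $t_a^{-1}$, $(1+t_a^{-1}\|Y\|)^s$ and $(1+n^2)^{s/2}$ transparent.

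Your plan has a genuine gap at the ``reduce to normal form'' step. The group generated by $A_\Gamma$ (discrete) and $g_\R$ (the diagonal subgroup) does \emph{not} act transitively on frames, so you cannot bring a general $a$ to one with $X$ proportional to $X_0$ by those moves alone; and even where you can, the quantities in the statement ($t_a$, $\|Y\|$, $\vert\cdot\vert_{a,s}$, $D^H_a$) are frame--dependent and not invariant under either operation, so the identity has to be established for the given $a$, not an equivalent one. Your formula for $G_H(y)$ is only valid in the special case $X\parallel X_0$; for general $a$ the flow $\phi^X_s$ moves in the $Y_0$ and $Z$ directions as well, and carrying the theta computation through those ``residual off--diagonal parts'' is exactly the skew--shift calculation the paper does. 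A related symptom: with your frame--independent choice of $F\in C^\infty_c((0,1))$ one gets $|f_H|_{L^\infty}=|F|_{L^\infty}$ and $|D^H_{a_0}(f_H)|=|\int_\R F|$, which cannot match the stated bound $|f_H|_{L^\infty}\le C\,t_a^{-1}|D^H_a(f_H)|$ uniformly in $a$; the paper's $f_H$ depends on $a$ through $R^\chi_a$, which is what produces the correct $t_a^{-1}$ scaling. Finally, your orthogonality argument for distinct central parameters (``separated by the orthogonality of the underlying irreducibles'') gives orthogonality in $L^2(M)$, not in $L^2(\T,dy)$; the paper's argument goes through the explicit $y$--frequencies of the Birkhoff sums.
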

\begin{proof}  We recall that whenever $<X,X_0>\not =0$ the return map of the flow $\phi^X_\R$ to the transverse torus $\T^2_\Gamma$ is a linear skew-shift, that is, 
a map of the form
$$
T_{\rho,\sigma} (y, z) =  (y +\rho, z+  y + \sigma)  \quad \text{  on }  \R/\Z \times \R/K^{-1}\Z\,,
$$
for constants $\rho$, $\sigma\in \R$.    The operator $ I_a: L^2(M) \to L^2(\T^2_\Gamma)$  defined as
$$
f \to  I_a(f):=\int_0^{t_a}  f\circ \phi^X_t (\cdot) dt  \,, \quad  \text {for all } f\in L^2(M)
$$
is a surjective linear map of $L^2(M)$ onto $L^2(\T^2_\Gamma)$ with right inverse $R^\chi_a$ defined as follows: let $\chi \in C^\infty_0(0, 1)$ denote any function 
of integral equal to one and, for any $F\in L^2(\T^2_\Gamma)$,  let $R^\chi_a(F)\in L^2(M)$ be the function uniquely defined by the identity 
$$
R^\chi_a(F)(\phi^X_t(x))  =   t_a^{-1}\chi (t/t_a) F(x)  \,, \quad \text{ for all }  (x,t) \in \T^2_\Gamma \times [0, t_a]  \,.
$$
It is immediate from the definition that there exists a constant $C_\chi>0$ such that
$$
\vert  R^\chi_a(F) \vert_{a,s}  \leq  C_\chi  t_a^{-1} (1+ t_a^{-1} \Vert Y\Vert)^s  \Vert  F \Vert_{W^s(\T_\Gamma^2)}\,.
$$
As explained in \cite{AFU}, \S 5, the space $L^2(\T_\Gamma^2)$ can be decomposed as a direct sum of  irreducible subspaces invariant under the action of the map 
$T_{\rho,\sigma}$ on $L^2(\T_\Gamma^2)$ by composition. The subspace of functions with non-zero central character can be split as a direct sum of components 
$H_{(m,n)}$ with $(m,n) \in \Z_{K\vert n\vert} \times \Z \setminus \{0\}$. These spaces are defined as follows. Let $\{ e_{m,n} \vert (m,n)\in \Z^2\}$ denote the basis 
of characters of  $\T^2_\Gamma$,  that is the basis given by the functions
$$
e_{m,n} (y,z) := \exp [2\pi \imath (my+nKz) ]\,, \quad \text{ for all } (y,z) \in \T^2_\Gamma\,.
$$
As described in \cite{AFU},  \S 5, the functions $F \in H_{m,n}$ are characterized by a Fourier expansion of the form
$$
F= \sum_{j\in \Z}  F_j  e_{m+jn,n} \,.
$$
A generator of the space of invariant distributions is the distribution $D_{(m,n)}$ such that
$$
D_{(m,n)} ( e_{m+jn, n}) = e^{ -2\pi \imath [(\rho m+\sigma K n)j+\rho K n \binom{ j }{2} ] } \,.
$$
It follows that $\vert D_{(m,n)} (e_{m+jn,n}) \vert =1$. For any irreducible component $H$ of central parameter $n\not =0$, there exists 
$m\in \Z_{\vert n\vert}$ such that the operator $I_a:L^2(M) \to L^2(\T^2_\Gamma)$ maps the space $H$ onto the space $H_{m,n}$, hence
the function $f_H:= R^\chi_a(e_{m,n})\in C^\infty(H)$  has the property that such that $\vert D^H_a (f_H) = \vert D_{(m,n)} (e_{m,n})\vert =1$,  and
$$
\int_0^{t_a}   f_H\circ \phi^X_t (x)dt  =   e_{m,n} (x) \,, \quad \text{ for all } x \in \T^2_\Gamma\,.
$$
In addition the following estimates hold:
$$
\vert f_H \vert_{L^\infty(M)} \leq C_\chi t_a^{-1} \quad   \text{ and } \quad \vert f_H \vert_{a,s} \leq  C_\chi  t_a^{-1} (1+ t_a^{-1} \Vert Y\Vert)^{s} (1+K^2n^2)^{s/2}\,.
$$
Since for every $n\in  \Z\setminus \{0\}$ and $m\in \Z_{K\vert n\vert}$, the system 
$$
\{  \exp [2\pi \imath ( m(y  +j \rho) +   Kn (z + j(\sigma+ y) + \rho \binom {j}{2} ] \}_{j\in \Z} \subset L^2(\T, dy)
$$
is orthonormal, we have the identity
$$
\|\sum_{j=0}^{J-1}  \exp [2\pi \imath ( m(y +j\rho) +   Kn (z + j(\sigma+ y) + \rho \binom {j}{2} )   ] \|^2_{L^2(\T, dy)}  = J\,.
$$
In addition, by an immediate computation
$$
\int_\T \left( \sum_{j=0}^{J-1}  \exp [2\pi \imath ( m(y +j \rho) +   Kn (z + j(\sigma+ y) + \rho \binom {j}{2} )  ] \right) dy \in \{0, 1\}\,.
$$

By the above formula it follows that, whenever $T/t_a \in \Z $ we have
$$
\begin{aligned}
&\vert \int_\T\left(  \int_0^T  f_H \circ \phi^X_t ( \phi^{Y_0}_y (x) )dt \right)dy\vert 
=\vert\int_\T (\sum_{k=0}^{[\frac{T}{t_a}]-1} e_{m,n} \circ T_{\rho,\sigma}^k (y,z) )dy\vert \leq 1\,,\\
&\| \int_0^T  f_H \circ \phi^X_t ( \phi^{Y_0}_y (x) )dt \|_{L^2(\T, dy)} 
= \|\sum_{j=0}^{[\frac{T}{t_a}]-1} e_{m,n} \circ T_{\rho,\sigma}^j  \|_{L^2(\T, dy)}  = \left(\frac{T}{t_a}\right)^{1/2}\, ,
\end{aligned}
$$
The argument is therefore complete. 
\end{proof}

For any infinite dimensional vector ${\bf c}:= (c_{i,n})\in  \ell^2$,  let $\beta_{\bf c} $ denote the Bufetov functional defined as follows
\begin{equation}\label{eq:bc}
\beta_{\bf c} = \sum_{n\in \Z\not=0} \sum_{i=1}^{\mu(n)}    c_{i,n} \beta^{i,n} \,,.
\end{equation}
It follows from the orthogonality property and from Corollary~\ref{cor:L2bounds_buf} that the function $\beta_{\bf c} (a,\cdot, T)\in L^2(M)$ for all $(a,T)\in A \times \R^+$. In fact,
$$
\Vert \beta_{\bf c}(a, \cdot, T) \Vert^2_{L^2(M)} =   \sum_{n\in \Z\not=0}  \sum_{i=1}^{\mu(n)} \vert c_{i,n}\vert^2  \Vert \beta^{i,n}(a, \cdot, T) \Vert_{L^2(M)}^2 
\leq  C^2 \vert {\bf c}\vert^2_{\ell^2}  T \,.
$$
For any  vector ${\bf c}:= (c_{i,n})\in  \ell^2$, let  $\vert {\bf c} \vert_s$ denote the norm defined as
$$
\vert {\bf c} \vert^2_s  =  \sum_{n\in \Z\not=0} \sum_{i=1}^{\mu(n)}  (1+K^2n^2)^s \vert c_{i,n}\vert^2 \,.
$$
For any $a=(X,Y,Z) \in A$ such that  $<X, X_0> \not =0$ or, equivalently, such that the return time $t_a>0$ is finite, and for any $x\in M$
let $\mathcal S_{a,x}$ denote the transverse cylinder defined as follows:
$$
\mathcal S_{a,x} = \{ x \exp{ (y' Y + z' Z)} \vert    (y',z')  \in [0, t_a^{-1}) \times \T \}  \subset M\,.
$$
Let $\Phi_{a,x} : \T^2_\Gamma \to \mathcal S_{a,x}$ denote the maps defined as follows.  For any $\xi \in \T^2_\Gamma$, let $\xi' \in  \mathcal S_{a,x}$ 
denote the first intersection of the orbit $\{\phi^X_t (\xi)\vert t\geq 0\}$ with the transverse cylinder  ${\mathcal S}_{a,x}$. By definition there exists $t(\xi) \geq 0$  such that
\begin{equation}
\label{eq:Phi_ax}
\xi' =  \Phi_{a,x} (\xi) = \phi^X_{t(\xi)}  (\xi) \,, \quad \text{ for all } \xi \in \T^2_\Gamma\,.
\end{equation}
 Let  $(y,z)$ and $(y',z')$ denote the coordinates, respectively on $\T^2_\Gamma$ and ${\mathcal S}_{a,x}$, given by the 
exponential map, as follows
$$
(y,z) \to  \xi_{y,z} := \Gamma \exp (yY_0 + z Z) \in \T^2_\Gamma   \, \text{ and } \,   (y',z') \to  \xi' _{y',z'} := x \exp (y'Y + z' Z)\,.
$$
Let $X= \alpha X_0 + \beta Y_0 + v Z$ and $Y = \gamma X_0 + \delta Y_0 + wZ$ 
with $\alpha \not=0$ and $\alpha \delta- \beta \gamma =1$. Let $x = \Gamma  \exp (y_x Y_0 + z_x Z) \exp (t_x X_0)$ with
$(y_x, z_x)\in\ \T\times \R /K\Z$ and $t_x \in [0, 1)$. By the Baker-Campbell-Hausdorff formula, we derive that 
\begin{equation}
\label{eq:t_xi}
 \vert t(\xi ) \vert = \vert \delta t_x + \gamma (y-y_x) \vert   \leq   \Vert  Y \Vert \,. 
\end{equation}
and that the map $\Phi_{a,x} : \T^2_\Gamma \to {\mathcal S}_{a,x}$  is given by formulas of the following form: there exists a polynomial $P(a,x,y)$ of total degree $4$, quadratic with respect to each of the variables $(a,x,y) \in {\mathcal M}\times M\times \R$, such that
\begin{equation}
\label{eq:Phi_map}
\Phi_{a,x}(y,z)=:  \quad\begin{cases} y' = \alpha (y-y_x) + \beta t_x  \,,  \\
z' = z +  P(a,x,y) \,. \end{cases}
\end{equation}
In particular, the map $\Phi_{a,x}$ is invertible and such that 
$$
\Phi_{a,x}^* (dy' \wedge dz' )  =    \alpha \,dy \wedge dz = t_a^{-1} dy \wedge dz\,.
$$
The cylinders $\mathcal S_{a,x}$ are foliated by images of the circles
$ \{\xi \exp (y Y_0) \vert y\in \T \} \subset \T^2_\Gamma$ under the map $\Phi_{a,x}$. 

\begin{lemma} \label{lemma:L2growth_curves}
For any $s>7/2$ there exists a constant $C_s>0$ such that, for all Diophantine $a \in DC(L)$, for all ${\bf c} \in \ell^2$,
for all $z\in \T$ and all $T>0$, we have
$$
\begin{aligned}
&\vert   \int_\T \beta_{\bf c}(a, \Phi_{a,x} (\xi_{y,z}) , T)  dy \vert   \leq C_s (t_a+t_a^{-1}) (1+ L) (1+ t_a^{-1} \Vert Y\Vert)^{s} \vert {\bf c}\vert_s  \,,    \\
&\left\vert  \|  \beta_{\bf c}(a, \Phi_{a,x} (\xi_{y,z}) , T)      \|_{L^2(\T, dy)} -   \left(\frac{T}{t_a}\right)^{1/2}  \vert {\bf c} \vert_0 \right\vert  \leq 
C_s (t_a+t_a^{-1})(1+ L) (1+ t_a^{-1} \Vert Y\Vert)^{s} \vert {\bf c}\vert_s\,.
\end{aligned}
$$
\end{lemma}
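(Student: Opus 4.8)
The plan is to transfer the problem to Lemma~\ref{L2growth_circles} by means of the asymptotic formula of Theorem~\ref{thm:asymptotic}. For each irreducible component $H_{i,n}$ let $f_{i,n}:=f_{H_{i,n}}$ be the function produced by Lemma~\ref{L2growth_circles}, which satisfies $\vert D^{i,n}_a(f_{i,n})\vert=1$, and set
$$
g_{\bf c} := \sum_{n\neq 0}\sum_{i=1}^{\mu(n)} \frac{c_{i,n}}{D^{i,n}_a(f_{i,n})}\, f_{i,n}\,.
$$
Since $D^{i,n}_a$ is supported on $H_{i,n}$, we have $D^{i,n}_a(g_{\bf c})=c_{i,n}$, hence by the definitions \eqref{buf_f} and \eqref{eq:bc} the associated Bufetov functional is exactly $\beta^{g_{\bf c}}=\beta_{\bf c}$. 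Because the components $H_{i,n}$ are mutually orthogonal invariant subspaces for $(1+\Delta_a)^s$, the Sobolev norm splits and the bounds of Lemma~\ref{L2growth_circles} give
$$
\vert g_{\bf c}\vert_{a,s}^2 = \sum_{n,i}\vert c_{i,n}\vert^2\,\vert f_{i,n}\vert_{a,s}^2 \leq C\, t_a^{-2}\,(1+t_a^{-1}\Vert Y\Vert)^{2s}\sum_{n,i}(1+n^2)^s\vert c_{i,n}\vert^2 \leq C\, t_a^{-2}\,(1+t_a^{-1}\Vert Y\Vert)^{2s}\,\vert{\bf c}\vert_s^2\,,
$$
while the uniform bound $\vert f_{i,n}\vert_{L^\infty}\leq C t_a^{-1}$ together with Cauchy--Schwarz over the index set yields $\vert g_{\bf c}\vert_{L^\infty(M)} \leq C t_a^{-1}\sum_{n,i}\vert c_{i,n}\vert \leq C_s\, t_a^{-1}\,\vert{\bf c}\vert_s$.

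Next I reduce the evaluation of $\beta_{\bf c}$ at $\Phi_{a,x}(\xi_{y,z})$ to the ergodic integrals of Lemma~\ref{L2growth_circles} in three controlled steps. First, Theorem~\ref{thm:asymptotic} replaces $\beta_{\bf c}(a,\Phi_{a,x}(\xi_{y,z}),T)$ by $\int_0^T g_{\bf c}\circ\phi^X_t(\Phi_{a,x}(\xi_{y,z}))\,dt$ with an error bounded, uniformly in $y$, by $C_s(1+L)\vert g_{\bf c}\vert_{a,s}$. Second, since $\Phi_{a,x}(\xi_{y,z})=\phi^X_{t(\xi)}(\xi_{y,z})$ by \eqref{eq:Phi_ax}, the orbit integral over $\Phi_{a,x}(\xi_{y,z})$ equals $\int_{t(\xi)}^{T+t(\xi)} g_{\bf c}\circ\phi^X_u(\xi_{y,z})\,du$, which differs from $\int_0^T g_{\bf c}\circ\phi^X_u(\xi_{y,z})\,du$ by two boundary integrals over intervals of length $\vert t(\xi)\vert\leq\Vert Y\Vert$ (see \eqref{eq:t_xi}), hence by at most $2\Vert Y\Vert\,\vert g_{\bf c}\vert_{L^\infty}$. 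Third, writing $N:=\lfloor T/t_a\rfloor$ and replacing $T$ by $Nt_a\in\Z t_a$ produces an error $\int_{Nt_a}^T g_{\bf c}\circ\phi^X_u(\xi_{y,z})\,du$ over an interval of length less than $t_a$, bounded by $t_a\,\vert g_{\bf c}\vert_{L^\infty}$. Using the two norm estimates above, all three errors are at most $C_s(t_a+t_a^{-1})(1+L)(1+t_a^{-1}\Vert Y\Vert)^s\vert{\bf c}\vert_s$, uniformly in $y$.

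Finally I assemble the main term. Observing that $\xi_{y,z}=\phi^{Y_0}_y(\xi_{0,z})$ with $\xi_{0,z}\in\T^2_\Gamma$, Lemma~\ref{L2growth_circles} applies to each $\int_0^{Nt_a} f_{i,n}\circ\phi^X_s(\phi^{Y_0}_y(\xi_{0,z}))\,ds$. For the first inequality, each such integral has $y$-average of modulus at most $1$, so the main term has modulus at most $\sum_{n,i}\vert c_{i,n}\vert\leq C_s\vert{\bf c}\vert_s$. For the second inequality I invoke the orthogonality statement of Lemma~\ref{L2growth_circles}, by which the cross terms vanish, so that
$$
\Big\| \sum_{n,i}\frac{c_{i,n}}{D^{i,n}_a(f_{i,n})}\int_0^{Nt_a} f_{i,n}\circ\phi^X_s(\phi^{Y_0}_y(\xi_{0,z}))\,ds \Big\|_{L^2(\T,dy)} = \Big(\sum_{n,i}\vert c_{i,n}\vert^2\, N\Big)^{1/2} = N^{1/2}\vert{\bf c}\vert_0\,,
$$
which differs from $(T/t_a)^{1/2}\vert{\bf c}\vert_0$ by at most $\vert{\bf c}\vert_0\leq\vert{\bf c}\vert_s$. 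Combining these main terms with the error bounds of the previous paragraph via the triangle inequality (in modulus for the first estimate, and in $L^2(\T,dy)$ for the second) yields the two stated inequalities.

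The hard part is purely the bookkeeping of the summation over the infinitely many irreducible components. One must exploit that the multiplicity is $\mu(n)=\vert n\vert$ and that $s>7/2$, so that the convergence of $\sum_{n}\vert n\vert(1+K^2n^2)^{-s}$ controls $\sum_{n,i}\vert c_{i,n}\vert\leq C_s\vert{\bf c}\vert_s$, while the $\ell^2$ weight $(1+K^2n^2)^s$ defining $\vert{\bf c}\vert_s$ matches exactly the growth $(1+n^2)^{s/2}$ of the Sobolev norms $\vert f_{i,n}\vert_{a,s}$, so that $\vert g_{\bf c}\vert_{a,s}\leq C\, t_a^{-1}(1+t_a^{-1}\Vert Y\Vert)^s\vert{\bf c}\vert_s$. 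Everything else is a direct application of Lemma~\ref{L2growth_circles} together with the flow--invariance of orbit integrals.
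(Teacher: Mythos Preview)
Your proof is correct and follows essentially the same route as the paper's: build the test function $g_{\bf c}=\sum c_{i,n}\,f_{i,n}/D^{i,n}_a(f_{i,n})$ from Lemma~\ref{L2growth_circles}, identify $\beta_{\bf c}=\beta^{g_{\bf c}}$, and pass between the Bufetov functional and the ergodic integral via Theorem~\ref{thm:asymptotic}, correcting for the base-point shift $\Phi_{a,x}$ and the rounding of $T$ to $\Z t_a$ by $L^\infty$ bounds on $g_{\bf c}$. The only cosmetic differences are that the paper normalizes so that $D^{i,n}_a(f_{i,n})=1$ directly (rather than dividing by a unimodular constant) and rounds $T$ up to $t_a([T/t_a]+1)$ instead of down to $\lfloor T/t_a\rfloor t_a$; your remark that $s>7/2$ is needed for the $\ell^1$--$\vert{\bf c}\vert_s$ comparison is a slight overstatement (any $s>1$ suffices there, while $s>7/2$ is required only for Theorem~\ref{thm:asymptotic}), but this does not affect the argument.
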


\begin{proof} By Lemma~\ref{L2growth_circles}, for every $n\not =0$ and every $i\in \{1, \dots, \mu(n)\}$,  there exists a function $f_{i,n}\in C^\infty(H_{i,n})$ with
$D^{i,n}(f_{i,n}) =1$  such that, for all $T\in \Z t_a$ we have, for all $\xi \in \T^2_\Gamma$ the identities
$$
\begin{aligned}
& \vert \int_\T \left( \int_0^T f_{i,n} (\phi_s^X(\phi^{Y_0}_y(\xi)))ds \right)  dy \vert  \in \{0,1\} \,, \\
&\|\int_0^Tf_{i,n} (\phi_s^X(\phi^{Y_0}_y(\xi)))ds\|_{L^2(\T, dy)} =   (\frac{T}{t_a})^{1/2}\,.
\end{aligned}
$$
In addition, the integrals
$$
\int_0^Tf_{i,n} (\phi_s^X(\phi^{Y_0}_y(\xi)))ds
$$
form an orthogonal system of functions in $L^2(\T, dy)$.   Let then 
\begin{equation}
\label{eq:f_c}
f_{\bf c} :=  \sum_{n\in \Z\not=0} \sum_{i=1}^{\mu(n)} c_{i,n} f^{i,n} \,,
\end{equation}
By construction we have $\beta_{\bf c}= \beta^{f_{\bf c}}$. It is immediate that
$$
 \vert \int_\T \left( \int_0^T f_{\bf c} (\phi_s^X(\phi^{Y_0}_y(\xi)))ds \right)  dy \vert  \leq  \vert {\bf c} \vert_{\ell^1}\,.
$$
By orthogonality we also have
$$
\|\int_0^T f_{\bf c} (\phi_s^X(\phi^{Y_0}_y(\xi)))ds\|_{L^2(\T, dy)} =  (\frac{T}{t_a})^{1/2} \vert {\bf c} \vert_0\,.
$$
From the estimates on the functions $f_{i,n}$ stated in Lemma~\ref{L2growth_circles} we  derive the bounds
$$
\vert f_{\bf c} \vert_{L^{\infty}(M)} \leq C \vert {\bf c}\vert_{\ell^1} \quad \text{ and }\quad\vert f_{\bf c} \vert_{a,s} \leq  Ct_a^{-1} (1 + t_a^{-1} \Vert Y\Vert)^s  
 \vert {\bf c} \vert_s\,.
$$
From this estimate it follows that, for every $z \in \T$  and for all $T>0$,  we have
$$
\Vert \int_0^T f_{\bf c} (\phi_s^X\circ\Phi_{a,x}(\xi_{y,z}))ds - \int_0^T f_{\bf c} (\phi_s^X(\xi_{y,z}))ds \Vert_{L^2(\T,dy)}    \leq  2 \vert f_{\bf c} \vert_{L^\infty(M)}  \Vert Y \Vert \,.
$$
Finally let $T_a:= t_a( [T/t_a]+1) \in \Z t_a$. We have 
$$
\Vert \int_0^T f_{\bf c} (\phi_s^X(\xi_{y,z}))ds - \int_0^{T_a} f_{\bf c} (\phi_s^X(\xi_{y,z}))ds \Vert_{L^2(\T,dy)}    \leq   t_a \vert f_{\bf c} \vert_{L^\infty(M)} \,.
$$
We have therefore derived  that, for some constant $C'>0$ and for all $T>0$, the following bounds hold:
$$
\begin{aligned}
&\vert  \int_\T \left( \int_0^T f_{\bf c} (\phi_s^X(\Phi_{a,x} (\xi_{y,z}))ds \right) dy \vert \leq C' t_a (1 + t_a^{-1}\Vert Y \Vert) 
\vert {\bf c} \vert_{\ell^1}\,, \\
&\left\vert \|\int_0^T f_{\bf c} (\phi_s^X(\Phi_{a,x} (\xi_{y,z}))ds\|_{L^2(\T, dy)} -     \left(\frac{T}{t_a}\right)^{1/2}  \vert {\bf c} \vert_0  \right\vert \leq C' t_a (1 + t_a^{-1}\Vert Y \Vert) 
\vert {\bf c} \vert_{\ell^1}\,.
\end{aligned}
$$
By the asymptotic property of Theorem~\ref{thm:asymptotic}, for all $s>7/2$ there exists a constant $C_s>0$ such that  we have the uniform estimate
$$
\left\vert \int_0^{T}  f \circ \phi^X_t (x) dt - \beta^f (a, x, T)   \right\vert \leq  C_{s} (1+L) \vert f \vert_{a,s}\,.
$$
Since, by the above bounds on the function $f_{\bf c}$, there exists constant $C'_s>0$ such that 
$$
C' t_a (1 + t_a^{-1}\Vert Y \Vert) \vert {\bf c} \vert_{\ell^1} +
C_{s} t_a^{-1}(1+L) \vert f_{\bf c} \vert_{a,s} \leq    C'_s (t_a+t_a^{-1}) (1+ L) (1+ t_a^{-1} \Vert Y\Vert)^{s} \vert {\bf c}\vert_s\,, 
$$
we arrive at the estimates claimed in the statement.
\end{proof}

\section{Analyticity of the functionals} \label{sec:analyticity}

In this section we will prove that, for all $ a=(X,Y,Z)\in DC$ and for all $T\in \R$, the Bufetov functionals $\beta_H(a,\cdot, T)$ are real analytic  along the foliation tangent to
the integrable distribution $\{Y,Z\}$. This result is crucial in deriving measure estimates for the level sets of the Bufetov functionals and for our results on decay of correlations
of time changes.

By the orthogonality property, for every $T>0$, the Bufetov cocycle belongs to a single irreducible component $H$ (with central parameter $n \in \Z\setminus\{ 0\}$),
hence in particular (or from its definition), for all $(x,T)\in M\times \R$ and for all $t\in \R$, 
\begin{equation}
\label{eq:Zbeta}
\beta_H(a, \phi^Z_t (x), T) =  e^{2 \pi \imath K n t} \beta_H(a,x, T)\,.
\end{equation}
Let $\gamma:[0, T] \to M$ a $C^1$ (or piece-wise $C^1$ parametrized path). For every $t\in \R$ we define
$$
\gamma^Z_t (s) = \phi^Z_{ts} ( \gamma(s)) \, \quad \text{ for all }  s\in [0,T]\,.
$$
\begin{lemma} \label{lemma:central_twist}
The following formula holds:
$$
\hat \beta_H(a,\gamma^Z_t) =  e^{2\pi \imath t n K T} \hat \beta_H(a,\gamma) -2 \pi \imath n K t \int_0^T  e^{2\pi \imath t n K s} \hat \beta_H(a,\gamma\vert_{[0,s]}) ds \,.
$$
\end{lemma}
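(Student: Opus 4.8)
The plan is to recognize the right-hand side as the integration-by-parts form of a Riemann--Stieltjes integral, and then to prove that $\hat\beta_H(a,\gamma^Z_t)$ equals that integral. I set
$$
B(s) := \hat\beta_H\big(a,\gamma\vert_{[0,s]}\big)\,, \qquad s\in[0,T]\,.
$$
By finite additivity \eqref{eq:additive} the increments of $B$ are the Bufetov measures of the subarcs, $B(s_j)-B(s_{j-1}) = \hat\beta_H(a,\gamma\vert_{[s_{j-1},s_j]})$, and by the H\"older estimates for the functionals the map $s\mapsto B(s)$ is continuous (its increments are controlled by the lengths of the corresponding subarcs). Since $e^{2\pi\imath tnKs}$ is $C^1$ in $s$, the Riemann--Stieltjes integral $\int_0^T e^{2\pi\imath tnKs}\,dB(s)$ exists, $B(0)=0$, and integration by parts gives
$$
\int_0^T e^{2\pi\imath tnKs}\,dB(s) = e^{2\pi\imath tnKT}\,\hat\beta_H(a,\gamma) - 2\pi\imath nK t\int_0^T e^{2\pi\imath tnKs}\,\hat\beta_H\big(a,\gamma\vert_{[0,s]}\big)\,ds\,,
$$
which is exactly the asserted right-hand side. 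It therefore suffices to prove the identity $\hat\beta_H(a,\gamma^Z_t)=\int_0^T e^{2\pi\imath tnKs}\,dB(s)$.

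To this end I would use two structural properties of $\hat\beta_H$ beyond additivity. First, a \emph{central equivariance} generalizing \eqref{eq:Zbeta}: since $Z$ is central, $\phi^Z_\tau$ commutes with $\phi^X_\R$, and since $\hat\beta_H$ is supported on the single irreducible component $H$ of central parameter $n$, one has $\hat\beta_H\big(a,(\phi^Z_\tau)_*\gamma'\big) = e^{2\pi\imath nK\tau}\,\hat\beta_H(a,\gamma')$ for every rectifiable arc $\gamma'$ (for orbit arcs this is precisely \eqref{eq:Zbeta}, and the same orthogonality argument, together with the fact that $\phi^Z_\tau$ acts on $H$ by the scalar $e^{2\pi\imath nK\tau}$, gives it for arbitrary arcs). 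Second, the \emph{vanishing property} \eqref{eq:vanishing}: $\hat\beta_H$ annihilates any arc tangent to the central direction $Z$. Now fix a partition $0=s_0<s_1<\dots<s_N=T$ and form the staircase approximation of $\gamma^Z_t$ as the concatenation $\Sigma := \sum_{j=1}^N (P_j+C_j)$, where $P_j:=(\phi^Z_{ts_{j-1}})_*\big(\gamma\vert_{[s_{j-1},s_j]}\big)$ is the subarc pushed by the \emph{constant} central time $ts_{j-1}$, and $C_j$ is the central arc $\{\phi^Z_\sigma(\phi^Z_{ts_{j-1}}(\gamma(s_j)))\mid \sigma\in[0,t(s_j-s_{j-1})]\}$ joining the endpoint of $P_j$ to $\gamma^Z_t(s_j)$. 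By construction the endpoints of consecutive pieces match and $\Sigma$ runs from $\gamma^Z_t(0)=\gamma(0)$ to $\gamma^Z_t(T)$. Using additivity, equivariance, and the vanishing of $\hat\beta_H$ on each $C_j$,
$$
\hat\beta_H(a,\Sigma) = \sum_{j=1}^N \Big(\hat\beta_H(a,P_j)+\hat\beta_H(a,C_j)\Big) = \sum_{j=1}^N e^{2\pi\imath tnKs_{j-1}}\big(B(s_j)-B(s_{j-1})\big)\,,
$$
which is a Riemann--Stieltjes sum converging, as the mesh tends to $0$, to $\int_0^T e^{2\pi\imath tnKs}\,dB(s)$.

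The remaining step, and the main obstacle, is to pass from $\hat\beta_H(a,\Sigma)$ to $\hat\beta_H(a,\gamma^Z_t)$, i.e. to show that the total contribution of the error loops $\ell_j := \gamma^Z_t\vert_{[s_{j-1},s_j]}-P_j-C_j$ tends to $0$ with the mesh. Each $\ell_j$ is a closed loop bounding a thin $2$-current $\Delta_j$ in the plane spanned by the $\gamma$-direction and $Z$, whose enclosed area (measured in the renormalized frame $g_{-\tau}(a)=(X_\tau,Y_\tau,Z)$, with the $\hat Y_\tau=e^{-\tau}\hat Y$ scaling) is of order $t(s_j-s_{j-1})^2$, so that $\sum_j \mathrm{Area}(\Delta_j)=O(t\cdot\mathrm{mesh})\to 0$. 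The H\"older bound alone is too weak here, since $|\hat\beta_H(a,\ell_j)|\lesssim \mathrm{mesh}^{1/2}$ summed over $N\sim\mathrm{mesh}^{-1}$ loops diverges; the point is rather that $\hat\beta_H$ is closed up to an error controlled by the enclosed area, which is exactly the mechanism already used to establish the invariance property \eqref{eq:invariance}. Applying that area-controlled estimate to each $\Delta_j$, and using the Diophantine bound of Lemma~\ref{lemma:Bufetov_funct} to control the constants uniformly along the backward renormalization orbit, I expect $\sum_j|\hat\beta_H(a,\ell_j)|=O(t\cdot\mathrm{mesh})\to 0$. Combining this with the convergence of the Riemann--Stieltjes sums yields $\hat\beta_H(a,\gamma^Z_t)=\int_0^T e^{2\pi\imath tnKs}\,dB(s)$, and the integration by parts above then gives the stated formula.
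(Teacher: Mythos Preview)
Your approach is genuinely different from the paper's and the structural steps (central equivariance, vanishing on central arcs, staircase decomposition, Riemann--Stieltjes integration by parts) are correct. The paper instead works \emph{before} taking the limit that defines $\hat\beta_H$: for any $1$-form $\alpha$ supported on $H$ it computes the pairing $\langle\gamma^Z_t,\alpha\rangle$ exactly, using that $Z$ is central and that the coefficients of $\alpha$ are $Z$-eigenfunctions of eigenvalue $2\pi\imath nK$, to obtain
\[
\langle\gamma^Z_t,\alpha\rangle=\int_0^T e^{2\pi\imath nKts}\,\alpha(\gamma'(s))\,ds+\int_0^T(\imath_Z\alpha)\circ\gamma^Z_t(s)\,ds\,,
\]
integrates the first term by parts, and then passes to the limit $e^{-\tau/2}\mathcal B^{-s}_{g_{-\tau}(a)}(\cdot)$. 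The $\imath_Z\alpha$-term disappears in that limit because $Z$ is fixed by the renormalization. No approximation or error control is needed.

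The gap in your argument is precisely the step you flag as the main obstacle, and the proposed fix does not work as stated. The analogy with the $Y$-invariance property~\eqref{eq:invariance} is misleading: there the filling surface is tangent to $\{Y,Z\}$, and since $\hat Y_\tau=e^{-\tau}\hat Y$ along the backward renormalization orbit, its area in the $g_{-\tau}(a)$-frame stays bounded, so $e^{-\tau/2}$ times a bounded quantity tends to $0$. Your error loops $\ell_j$, however, bound surfaces $\Delta_j$ tangent to $\{\gamma',Z\}$. In the main application (and generically) $\gamma'$ has a nonzero $X$-component, so the area of $\Delta_j$ in the $g_{-\tau}(a)$-frame scales like $e^{\tau}$, and $e^{-\tau/2}\cdot e^{\tau}\cdot\mathrm{Area}\to\infty$. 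Equivalently, if one tries the closed-loop refinement of the remainder bound in Lemma~\ref{lemma:Bufetov_funct} (where $\int_{\ell_j}du=0$), the controlling quantity becomes $\int_{\ell_j}|\hat Z|\asymp t\,\Delta s$, and summing over $N\sim(\Delta s)^{-1}$ loops gives a contribution of order $tT$, which does not vanish with the mesh. So neither the area mechanism from~\eqref{eq:invariance} nor the direct remainder bound yields $\sum_j|\hat\beta_H(a,\ell_j)|\to 0$. The paper's pre-limit computation sidesteps this difficulty entirely, and is the simplest route to the identity.
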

\begin{proof}   Let $a=(X,Y,Z)$ and let $\alpha$ be a $1$-form supported on a single irreducible component $H$.
As above we have the decomposition
$$
\alpha  = \alpha_X  \hat X + \alpha_Y \hat Y + \alpha_Z \hat Z\,.
$$

Let us compute the pairing of the current $\gamma^Z_t $ with the $1$-form $\alpha$ on $M$. By definition the tangent
vector of the path $\gamma^Z_t$ is given by the formula
$$
\frac{d\gamma^Z_t }{ds} =  D\phi^Z_{ts} ( \frac{d\gamma}{ds}) +   t Z \circ \gamma^Z_t \,.
$$
It follows that the pairing is given by the formula
$$
<\gamma^Z_t, \alpha> = \int_0^T  [\alpha (D\phi^Z_{ts} ( \frac{d\gamma}{ds})(s)) +  \imath_Z\alpha \circ \gamma^Z_t(s)] \, ds\,.
$$
Since $Z$ belongs to the center of the Lie algebra and the coefficients $\alpha_X$, $\alpha_Y$ and $\alpha_Z$ of $\alpha$ are eigenfunctions of the subgroup generated by $Z$ of eigenvalue $2\pi \imath n K \in 2\pi \imath K \Z$, it follows that
$$
<\gamma^Z_t, \alpha> = \int_0^T  [ e^{2\pi \imath n K t s}  \alpha ( \frac{d\gamma}{ds}(s)) +  \imath_Z\alpha \circ \gamma^Z_t(s)] \, ds\,.$$
Integration by parts gives
$$
\begin{aligned}
\int_0^T  e^{2\pi \imath nK t s}  \alpha ( \frac{d\gamma}{ds}(s))ds &= e^{2\pi \imath nK t T}  \int_0^T   \alpha ( \frac{d\gamma}{ds}(s)) ds \\&- 2\pi \imath n Kt \int_0^T   e^{2\pi \imath nKt s}  [\int_0^s \alpha ( \frac{d\gamma}{dr}(r) )  dr ] ds\,,
\end{aligned}
$$
hence we have the formula
$$
\begin{aligned}
<\gamma^Z_t, \alpha> = e^{2\pi \imath n K t T} <\gamma, \alpha> - 2\pi \imath n Kt &\int_0^T   e^{2\pi \imath n Kt s}  <\gamma\vert_{[0,s]}, \alpha> ds  \\ &+
\int_0^T (\imath_Z\alpha \circ \gamma^Z_t)(s) ds\,.
\end{aligned}
$$

Since the flow $g_\R$ is identity on the center $Z$, it follows that 

$$
\lim_{t\to+\infty}  e^{-\frac{t}{2}} \int_0^T (\imath_Z (\rho_{-t})^*\alpha \circ \gamma^Z_t)(s) ds  =0\,,
$$
hence the stated formula follows by the definition of the Bufetov functional and by the linearity of the projection operators.

\end{proof}

\begin{lemma}  \label{lemma:Ybeta} The following formula holds:
$$
\beta_H(a,\phi^Y_t (x), T) = e^{-2\pi \imath t n K T} \beta_H(a,x, T)  
+2 \pi \imath n K t \int_0^T  e^{-2\pi \imath  t n K s}  \beta_H(a,x, s) ds \,.
$$
\end{lemma}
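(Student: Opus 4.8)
The plan is to deduce this formula from the central-twist formula of Lemma~\ref{lemma:central_twist} by exploiting the Heisenberg commutation relation $[X,Y]=Z$, together with the $\phi^Y_\R$-invariance property~\eqref{eq:invariance} of the Bufetov functional. The key observation is that translating the base point of an $X$-orbit arc by the flow $\phi^Y_t$ produces, after the Baker--Campbell--Hausdorff identity, precisely a $Z$-twist of the type handled in Lemma~\ref{lemma:central_twist}.

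First I would write $\beta_H(a,\phi^Y_t(x),T)=\hat\beta_H(a,\sigma)$, where $\sigma(s):=\phi^X_s(\phi^Y_t(x))$ for $s\in[0,T]$ is the $X$-orbit arc based at $\phi^Y_t(x)$. Since the nilflows act by right multiplication, $\sigma(s)=x\exp(tY)\exp(sX)$, and the Baker--Campbell--Hausdorff formula in the (two-step nilpotent) Heisenberg group gives $\exp(tY)\exp(sX)=\exp(sX)\exp(tY)\exp(-stZ)$. Hence
\[
\sigma(s)=\phi^Z_{-st}\bigl(\phi^Y_t(\phi^X_s(x))\bigr).
\]
Setting $\gamma:=(\phi^Y_t)_*\gamma^X_T(x)$, so that $\gamma(s)=\phi^Y_t(\phi^X_s(x))$, this reads $\sigma(s)=\phi^Z_{-st}(\gamma(s))$; comparing with the definition $\gamma^Z_\tau(s)=\phi^Z_{\tau s}(\gamma(s))$ preceding Lemma~\ref{lemma:central_twist}, we see that $\sigma=\gamma^Z_{-t}$ with base curve $\gamma$. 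Likewise $\gamma\vert_{[0,s]}=(\phi^Y_t)_*\gamma^X_s(x)$.

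Next I would apply Lemma~\ref{lemma:central_twist} to the curve $\gamma$ with twist parameter $-t$ in place of $t$, obtaining
\[
\hat\beta_H(a,\gamma^Z_{-t})=e^{-2\pi\imath t nKT}\hat\beta_H(a,\gamma)+2\pi\imath nKt\int_0^T e^{-2\pi\imath t nKs}\hat\beta_H(a,\gamma\vert_{[0,s]})\,ds.
\]
To rewrite the right-hand side in terms of $\beta_H(a,x,\cdot)$, I would invoke the invariance property~\eqref{eq:invariance}, which yields $\hat\beta_H(a,\gamma)=\hat\beta_H(a,\gamma^X_T(x))=\beta_H(a,x,T)$ and, for each $s$, $\hat\beta_H(a,\gamma\vert_{[0,s]})=\hat\beta_H(a,\gamma^X_s(x))=\beta_H(a,x,s)$; since the invariance holds for all positive times and $(\phi^Y_{-\tau})_*$ is the inverse push-forward, it extends to all $t\in\R$. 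As the left-hand side is $\hat\beta_H(a,\sigma)=\beta_H(a,\phi^Y_t(x),T)$, substitution gives exactly the claimed identity.

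The main obstacle, and the only genuinely delicate point, is the commutation step: one must track the sign in the Baker--Campbell--Hausdorff relation correctly so as to identify the base-point translation with the \emph{negative} twist $\gamma^Z_{-t}$, since an error there would flip the sign of both the exponential phase $e^{-2\pi\imath tnKT}$ and the integral correction. Everything else is a direct bookkeeping reduction to the already-established Lemma~\ref{lemma:central_twist} and the invariance property~\eqref{eq:invariance}; the approximation and limit analysis is entirely absorbed into the proof of Lemma~\ref{lemma:central_twist} and need not be repeated here.
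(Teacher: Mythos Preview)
Your proof is correct and uses the same ingredients as the paper: the Baker--Campbell--Hausdorff commutation, the $\phi^Y$-invariance~\eqref{eq:invariance}, and Lemma~\ref{lemma:central_twist}. The only organizational difference is that the paper applies Lemma~\ref{lemma:central_twist} with twist parameter $+t$ to the base curve $\gamma^X_T(\phi^Y_t(x))$, obtaining the inverse relation $\beta_H(a,x,T)=e^{2\pi\imath tnKT}\beta_H(a,\phi^Y_t(x),T)-2\pi\imath nKt\int_0^T e^{2\pi\imath tnKs}\beta_H(a,\phi^Y_t(x),s)\,ds$ and then substitutes, whereas you apply the lemma with parameter $-t$ to the $\phi^Y_t$-pushed curve and arrive at the stated formula directly.
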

\begin{proof}   We have the following commutation identities:
$$
\begin{aligned}
x  \exp({sX}) \exp({tY}) 
 &= x \exp ({tY}) \exp({sX})  \exp({ts Z})\,.
\end{aligned}
$$
Let then $\gamma^X_T (s) := \phi^X_s (x)$ for all $s\in [0, T]$. By definition the symbol $[\gamma^X_T]^Z_t$  denotes
the path given by the formula
$$
[\gamma^X_T]^Z_t (s) :=  \phi^Z_{ts} ( \gamma^X_T (s) ) \,, \quad \text{ for all } s \in [0, T]\,.
$$
It then follows by the definitions that 
$$
\phi^Y_t (\gamma^X_T(x))  =  [  \gamma^X_T (\phi^Y_t(x)) ]^Z_{t} \,.
$$
By the invariance property of the Bufetov functional and by Lemma \ref{lemma:central_twist} we have
$$
\begin{aligned}
\beta_H(a,x,T) &= \hat \beta_H(a,\phi^Y_t (\gamma^X_T(x)) ) =  e^{2\pi \imath t n KT} \hat \beta_H(a,\gamma^X_T (\phi^Y_t(x)))  \\ 
&-2 \pi \imath n Kt \int_0^T  e^{2\pi \imath  t n K s} \hat \beta_H(a,\gamma^X_T (\phi^Y_t(x))\vert_{[0,s]}) ds \\&=  e^{2\pi \imath t n KT} \beta_H(a,\phi^Y_t(x), T)  
-2 \pi \imath n Kt \int_0^T  e^{2\pi \imath  t n K s}  \beta_H(a,\phi^Y_t(x), s) ds
\end{aligned}
$$
The statement is an immediate consequence of the above formula.
\end{proof}

It follows from Lemma~\ref{lemma:Ybeta} and formula~\eqref{eq:Zbeta} that the Bufetov functional is {\it real analytic} (real and complex part are real analytic)
 on every leaf of the foliation tangent to the integrable distribution $\{Y,Z\}$.  
 
  For any $R>0$  let us introduce the analytic norm defined for all ${\bf c} \in \ell^2$ as
 $$
 \Vert  {\bf c} \Vert_{\omega, R} :=  \sum_{n\not =0} \sum_{i=1}^{\mu(n)}  e^{nR}  \vert c_{i,n} \vert 
 $$
 Let $\Omega_R$ denote the subspace of ${\bf c}\in \ell^2$ such that $ \Vert  {\bf c} \Vert_{\omega, R}$ is finite.
 
 \begin{lemma}  \label{lemma:buf_anal} For any ${\bf c} \in \Omega_R$,  any $a \in DC (L)$ and $T>0$, the functions defined as
 $$
 \beta_{\bf c} (a, \phi^Y_y \phi^Z_z (x),  T) \,,  \quad   \text{ for all } (y,z) \in \R \times \T\,,
 $$
 extends to a holomorphic function in the domain
 \begin{equation}
 \label{eq:D_RT}
 D_{R,T}:= \{ (y,z) \in \C\times \C/\Z \vert  \vert \text{\rm Im} (y) \vert T   +  \vert \text{\rm Im} (z) \vert   <   \frac{R}{2\pi K}\}\,.
 \end{equation}
 The following bounds hold: for any $R'<R$  there exists a constant $C_{R,R'}>0$ such that, for all $(y,z) \in D_{R',T}$ we have
 $$
\vert \beta_{\bf c} (a, \phi^Y_y \phi^Z_z (x),  T) \vert \leq C_{R,R'}  \Vert  {\bf c} \Vert_{\omega, R} \left(L+ T^{1/2} (1+ E_{\mathcal M}(a,T)) \right) (1+ K  \vert \text{\rm Im} (y) \vert   T ) \,.
 $$
 
 \end{lemma}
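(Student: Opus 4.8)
The plan is to build the extension one irreducible component at a time and then sum against the analytic norm $\Vert\cdot\Vert_{\omega,R}$. Fix $H=H_{i,n}$ of central parameter $n$, fix $x$, and set $F(y):=\beta_H(a,\phi^Y_y(x),T)$. The key structural input is that the right-hand side of Lemma~\ref{lemma:Ybeta} is manifestly entire in $y$: it is $e^{-2\pi\imath ynKT}\beta_H(a,x,T)$ plus $2\pi\imath nKy$ times $\int_0^T e^{-2\pi\imath ynKs}\beta_H(a,x,s)\,ds$, the integral of the fixed H\"older function $s\mapsto\beta_H(a,x,s)$ against an entire kernel. Likewise formula~\eqref{eq:Zbeta} shows the $z$-dependence extends as the entire multiplier $e^{2\pi\imath Knz}$, and since $Y$ and $Z$ commute the two combine to a holomorphic extension of $(y,z)\mapsto\beta_H(a,\phi^Y_y\phi^Z_z(x),T)$ to all of $\C\times\C/\Z$.

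Next I would record a uniform sup bound for the single-component cocycle. Arguing exactly as in Lemma~\ref{lemma:convergence}, the exact scaling~\eqref{buf:scaling} together with the component bound~\eqref{eq:buf_comp_bound} and the estimate~\eqref{eq:L_T} for $L_T$ gives, for every $w\in M$ and $s\in[0,T]$,
\[
\vert\beta_H(a,w,s)\vert\le C\bigl(L+T^{1/2}(1+E_{\mathcal M}(a,T))\bigr)=:B.
\]
Since $\phi^Y_u(x)$ stays in the compact manifold $M$, this yields in particular $\vert F(u)\vert\le B$ for all real $u$.

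The crux is to promote this real-axis bound to a bound on the whole strip without creating growth in $\re(y)$: estimating the formula of Lemma~\ref{lemma:Ybeta} termwise produces the prefactor $\vert 2\pi nKy\vert$, i.e. the full modulus $\vert y\vert$, while the statement only tolerates $\vert\im(y)\vert$. I would resolve this by a Phragm\'en--Lindel\"of argument for functions of exponential type. Reading the same formula gives $\vert F(y)\vert\le C(1+\vert y\vert)B\,e^{2\pi\vert n\vert KT\vert\im(y)\vert}$, so $F$ is entire of exponential type $\tau_n:=2\pi\vert n\vert KT$ and is bounded by $B$ on $\R$; applying Phragm\'en--Lindel\"of to $F(y)e^{\pm\imath\tau_n y}$ in the upper and lower half-planes gives $\vert F(u+\imath v)\vert\le B\,e^{\tau_n\vert v\vert}$. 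Multiplying by $\vert e^{2\pi\imath Knz}\vert=e^{-2\pi Kn\,\im(z)}\le e^{2\pi K\vert n\vert\vert\im(z)\vert}$ produces the single-component estimate
\[
\bigl\vert\beta_H(a,\phi^Y_y\phi^Z_z(x),T)\bigr\vert\le B\,\exp\!\bigl[2\pi K\vert n\vert\bigl(T\vert\im(y)\vert+\vert\im(z)\vert\bigr)\bigr].
\]

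Finally I would sum over $(i,n)$. On the domain $D_{R',T}$ of~\eqref{eq:D_RT} one has $2\pi K\bigl(T\vert\im(y)\vert+\vert\im(z)\vert\bigr)<R'$, so each term is bounded by $\vert c_{i,n}\vert B\,e^{\vert n\vert R'}$ and the series $\sum_{n,i}e^{\vert n\vert R'}\vert c_{i,n}\vert$ converges and is dominated by $\Vert{\bf c}\Vert_{\omega,R}$ as soon as $R'<R$; the same room $R'<R$ absorbs any polynomial factor through $\vert n\vert^{k}e^{\vert n\vert R'}\le C_{R,R'}e^{\vert n\vert R}$, so one may equally keep the prefactor $1+K\vert n\vert T\vert\im(y)\vert$ instead of invoking Phragm\'en--Lindel\"of. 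Uniform convergence on compact subsets of $D_{R',T}$ shows the sum $\beta_{\bf c}(a,\phi^Y_y\phi^Z_z(x),T)$ is holomorphic there, and collecting constants yields the asserted bound, the factor $1+K\vert\im(y)\vert T\ge1$ being harmless. The main difficulty is precisely the control of $\re(y)$ just described; the remaining steps are bookkeeping with the scaling bound $B$ and with the analytic norm.
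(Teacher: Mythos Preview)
Your argument is correct, and its skeleton --- extend each irreducible component via Lemma~\ref{lemma:Ybeta} and formula~\eqref{eq:Zbeta}, bound $\sup_{s\le T}\vert\beta_H(a,\cdot,s)\vert$ by $B=C\bigl(L+T^{1/2}(1+E_{\mathcal M}(a,T))\bigr)$ through scaling as in Lemma~\ref{lemma:convergence}, and sum against $\Vert{\bf c}\Vert_{\omega,R}$ --- is exactly the paper's.

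The one genuine difference is the step you flag as the main difficulty, controlling $\re(y)$. The paper does not invoke Phragm\'en--Lindel\"of: it writes the termwise bound with the prefactor $\vert\im(y)\vert$ directly. The implicit device is a base-point shift: for real $u$ put $x'=\phi^Y_u(x)$ and note that the two entire functions $w\mapsto\beta_H(a,\phi^Y_{u+w}\phi^Z_z(x),T)$ and $w\mapsto\beta_H(a,\phi^Y_{w}\phi^Z_z(x'),T)$ agree on the real axis, hence everywhere; evaluating the second at $w=\imath v$ via Lemma~\ref{lemma:Ybeta} gives the prefactor $2\pi K\vert n\vert\,\vert v\vert$ rather than $2\pi K\vert n\vert\,\vert y\vert$, and the extra $\vert n\vert$ is absorbed by the gap $R'<R$. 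Your Phragm\'en--Lindel\"of route sidesteps this entirely and even yields the slightly sharper bound $B\,e^{\tau_n\vert\im(y)\vert}$ without the polynomial prefactor. One small caveat: your aside that ``one may equally keep the prefactor $1+K\vert n\vert T\vert\im(y)\vert$ instead of invoking Phragm\'en--Lindel\"of'' is not supported by the naive termwise estimate (which gives $\vert y\vert$); it is precisely the base-point shift above that furnishes that alternative.
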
 
 \begin{proof}
 By Lemma~ \ref{lemma:Ybeta} and formula~\eqref{eq:Zbeta}, for all $x\in M$   we have
 $$
 \begin{aligned}
 \beta_H (a, \phi^Y_y \phi^Z_z (x), T)&=  e^{2\pi \imath (z-yT) n K } \beta_H(a,x, T)  
\\ &+2 \pi \imath n K y e^{2\pi \imath z n K }  \int_0^T  e^{-2\pi \imath  y n K s}  \beta_H(a,x, s) ds \,.
\end{aligned}
$$
As a consequence, by Lemma~\ref{lemma:convergence} for $(y,z) \in \C \times \C/\Z$ we have 
$$
\begin{aligned}
\vert \beta_{\bf c} &(a, \phi^Y_y \phi^Z_z (x),  T) \vert \leq     C \left(L+ T^{1/2} (1+ E_{\mathcal M}(a,T)) \right) \sum_{n\not=0} \sum_{i=1}^{\mu(n)} \vert c_{i,n} \vert e^{2\pi \vert \text{\rm Im}(z-yT)\vert n K } \\
&+  2 \pi C \left(L+ T^{1/2} (1+ E_{\mathcal M}(a,T) \right) K  \vert \text{Im} (y) \vert   T   \sum_{n\not=0} \sum_{i=1}^{\mu(n)}  n \vert c_{i,n} \vert    e^{2\pi  (\vert \text{\rm Im} (y)\vert T + \vert \text{\rm Im} (z)\vert)  n K }  \,. 
\end{aligned}
 $$
  It follows that the function  $\beta_H (a, \phi^Y_y \phi^Z_z (x), T)$ is given by a series of holomorphic functions on $\C \times \C/\Z$, which converges uniformly
  on compact subsets of the domain $D_{R,T}$, hence it is holomorphic there. The stated uniform bound follows immediately from the proof.
 \end{proof}
 
 For every $\eta \in (0,1)$, let $\Omega^{(\eta)}_\infty$ denote the subset of $\Omega_\infty= \cap_{R>0} \Omega_R$ defined  as follows. A sequence  ${\bf c} \in
\Omega^{(\eta)}_\infty$ if  there exists $C_\eta>0$ such that, for all $R>0$ we have
\begin{equation}
\label{eq:Omega_eta}
 \sum_{n\not =0} \sum_{i=1}^{\mu(n)}  n \vert c_{i,n} \vert e^{nR}   \leq  C_\eta e^{R^{2-\eta}} \,.
\end{equation}
The set $\Omega^{(\eta)}_\infty$ is dense in $\ell^2$, since it contains all finitely supported sequences.

  \begin{lemma}  \label{lemma:buf_anal_bis}
  For any ${\bf c} \in \Omega^{(\eta)}_\infty$,  any $a \in DC (L)$ and $T>0$, the functions defined as
 $$
 \beta_{\bf c} (a, \phi^Y_y \phi^Z_z (x),  T) \,,  \quad   \text{ for all } (y,z) \in \R \times \T\,,
 $$
 extends to a holomorphic function in the domain $\C\times \C/\Z$.   In addition, there exists a constant $C_\eta>0$ such that, for all $T>0$ 
 and for all $(y,z) \in \C\times \C/\Z$, we have
 $$
 \begin{aligned}
\vert \beta_{\bf c} &(a, \phi^Y_y \phi^Z_z (x),  T) \vert \leq  C_\eta   \left(L+ T^{1/2} (1+ E_{\mathcal M}(a,T)) \right) \\  \times &
 (1+ 2\pi K \vert \text{\rm Im} (y) \vert   T)     \exp [(\vert \text{\rm Im}(y)\vert T + \vert \text{\rm Im}(z)\vert)^{2-\eta} ] \,.
\end{aligned} 
$$
 \end{lemma}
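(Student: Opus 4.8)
The plan is to bootstrap from Lemma~\ref{lemma:buf_anal}, whose proof already records the exact representation of $\beta_H$ under the transverse flows: combining Lemma~\ref{lemma:Ybeta} with formula~\eqref{eq:Zbeta}, for every irreducible component $H$ of central parameter $n$ one has
\begin{equation*}
\beta_H(a, \phi^Y_y \phi^Z_z(x), T) = e^{2\pi \imath (z-yT)nK}\beta_H(a,x,T) + 2\pi \imath n K y\, e^{2\pi \imath z nK}\int_0^T e^{-2\pi \imath y nK s}\beta_H(a,x,s)\,ds,
\end{equation*}
and $\beta_{\bf c}=\sum_{n,i}c_{i,n}\beta^{i,n}$. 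The only new content relative to Lemma~\ref{lemma:buf_anal} is to upgrade holomorphy from the slab $D_{R,T}$ of~\eqref{eq:D_RT} to the \emph{entire} cylinder $\C\times\C/\Z$; this is exactly what the stronger sub-exponential summability condition~\eqref{eq:Omega_eta} defining $\Omega^{(\eta)}_\infty$ is designed to buy.

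First I would complexify $(y,z)$ and estimate the two summands of the displayed identity term by term, for each fixed $(n,i)$. The $\beta_H(a,x,s)$ factors are controlled, uniformly in $s\in[0,T]$, by the component bound extracted from Lemma~\ref{lemma:convergence}, which yields the factor $L+T^{1/2}(1+E_{\mathcal M}(a,T))$. Taking absolute values, each exponential contributes at most $e^{2\pi nK(|\operatorname{Im}(y)|T+|\operatorname{Im}(z)|)}$. The first summand is then immediately of this exponential size; the second (integral) summand is where the work lies, since the prefactor $2\pi\imath nKy$ carries an extra power of $n$ and the oscillatory integral must be estimated so that only $|\operatorname{Im}(y)|$ (not the unbounded $|\operatorname{Re}(y)|$) survives. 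This is what produces the polynomial factor $(1+2\pi K|\operatorname{Im}(y)|T)$ in the statement, after an integration by parts that absorbs the real part of $y$.

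I would then sum over all components. Setting $R:=2\pi K(|\operatorname{Im}(y)|T+|\operatorname{Im}(z)|)$ and invoking~\eqref{eq:Omega_eta}, both the unweighted sum $\sum_{n\ne 0}\sum_i|c_{i,n}|e^{nR}$ and the weighted sum $\sum_{n\ne 0}\sum_i n|c_{i,n}|e^{nR}$ (the latter coming from the extra $n$ in the integral term) are bounded by $C_\eta e^{R^{2-\eta}}$. Since $R^{2-\eta}=(2\pi K)^{2-\eta}(|\operatorname{Im}(y)|T+|\operatorname{Im}(z)|)^{2-\eta}$ and the lattice parameter $K$ is fixed throughout, the prefactor $(2\pi K)^{2-\eta}$ is a fixed constant that may be absorbed into $C_\eta$, leaving exactly the factor $\exp[(|\operatorname{Im}(y)|T+|\operatorname{Im}(z)|)^{2-\eta}]$ of the claimed bound.

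Finally, because $R$ is finite on every compact subset of $\C\times\C/\Z$ and $e^{R^{2-\eta}}<\infty$, the above shows that the series $\sum_{n,i}c_{i,n}\beta^{i,n}$ converges uniformly on compacta; as each summand is entire in $y$ and holomorphic and $\Z$-periodic in $z$, the Weierstrass theorem gives holomorphy of $\beta_{\bf c}(a,\phi^Y_y\phi^Z_z(x),T)$ on all of $\C\times\C/\Z$, and the same estimates read off the stated pointwise bound. I expect the main obstacle to be the second summand: tracking the extra factor of $n$ (which is precisely why~\eqref{eq:Omega_eta} is imposed with the weight $n$) and arranging the oscillatory-integral estimate so that growth is governed by $|\operatorname{Im}(y)|$ rather than $|\operatorname{Re}(y)|$, so that the bound — and hence the holomorphy — is genuinely uniform across the whole cylinder rather than merely on the slab $D_{R,T}$.
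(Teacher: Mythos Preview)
Your proposal is correct and follows the same approach as the paper: invoke the component formula from Lemma~\ref{lemma:Ybeta} and~\eqref{eq:Zbeta}, control each $\beta^{i,n}(a,x,s)$ uniformly in $s\in[0,T]$ via Lemma~\ref{lemma:convergence}, and then sum using~\eqref{eq:Omega_eta}. The paper's proof is in fact shorter than yours: it observes that $\Omega^{(\eta)}_\infty\subset\Omega_R$ for every $R>0$, so entireness follows at once from Lemma~\ref{lemma:buf_anal} (the domains $D_{R,T}$ exhaust $\C\times\C/\Z$), and the bound is obtained by re-reading the termwise estimate already recorded in that proof and plugging in~\eqref{eq:Omega_eta}.

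One caveat on the point you correctly flag as the obstacle. A literal integration by parts in $s$ to convert the prefactor $2\pi\imath nKy$ into something governed by $|\operatorname{Im}(y)|$ would require $s\mapsto\beta_H(a,x,s)$ to be $C^1$, whereas it is only H\"older of exponent $1/2-$. The clean way (which the paper does not spell out either) is a base-point shift: for real $y_1$ the formula of Lemma~\ref{lemma:Ybeta} is an exact identity on $M$, so one may apply it with base point $x':=\phi^Y_{\operatorname{Re}(y)}(x)\in M$ and purely imaginary shift $i\operatorname{Im}(y)$; by uniqueness of analytic continuation this agrees with the extension defined from $x$, and now the prefactor contributes $|\operatorname{Im}(y)|$ rather than $|y|$. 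This realizes exactly the cancellation you are after without any smoothness in $s$.
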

\begin{proof}  Since $\Omega^{(\eta)}_\infty \subset \Omega_R$, for all $R>0$, it follows already from Lemma~\ref{lemma:buf_anal} that the function 
$ \beta_{\bf c} (a, \phi^Y_y \phi^Z_z (x),  T)$ extends to a holomorphic function on $\C \times \C/\Z$. As in the proof of Lemma~\ref{lemma:buf_anal},
by Lemma~\ref{lemma:convergence} for $(y,z) \in \C \times \C/\Z$ we have 
$$
\begin{aligned}
\vert \beta_{\bf c} &(a, \phi^Y_y \phi^Z_z (x),  T) \vert \leq  C' \left(L+ T^{1/2} (1+ E_{\mathcal M}(a,T)) \right)    \\ &  \times
(1+ 2\pi K \vert \text{Im} (y) \vert   T)   \sum_{n\not=0} \sum_{i=1}^{\mu(n)}  n \vert c_{i,n} \vert    e^{2\pi K (\vert \text{\rm Im} (y)\vert T + \vert \text{\rm Im} (z)\vert)  n } \,.
\end{aligned}
$$
Since by assumption ${\bf c} \in \Omega^{(\eta)}_\infty$, the stated estimates is proved.
\end{proof}

In the Sections~\ref{sec.mes} and~\ref{sec.mesgen} we will use Lemmas \ref{lemma:buf_anal} and~\ref{lemma:buf_anal_bis} to get uniform measure estimates on 
sets where the Bufetov functional is small.  This is possible thanks to results on the measure of small sets for analytic functions (see \cite{BruGa}, \cite{Bru}).  

\section{Bounds on the valency}
\label{sec:valency}

For convenience of the reader we recall a result of A.~Brudnyi on the measure of level sets of analytic functions.

For any $r>1$, let $\mathcal O_r$ denote the space of holomorphic functions on the ball $B_\C (0,r) \subset \C ^n$.  Let $B_\R(0,1) := B_\C (0,1) \cap \R^n$
denote the real euclidean unit ball.

\begin{theorem} (\cite{Bru}, Thm. 1.9)\label{thm:bru} For any $f\in \mathcal O_r$ there is a constant $d:=d(f,r)>0$ such that for any convex set $D \subset B_\R(0,1)$, 
for any measurable subset $\omega  \subset   D$
$$
\sup_{D} \vert f\vert   \leq \left( \frac{ 4n \text{\rm Leb}(D) }  {\text{\rm Leb}(\omega) } \right)^d   \sup_{\omega}  \vert f\vert  \,.
$$

\end{theorem}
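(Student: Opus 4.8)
The plan is to reduce the statement to the classical Remez-type inequality for polynomials of Brudnyi and Ganzburg \cite{BruGa}, with the polynomial degree replaced by a local analytic invariant $d=d(f,r)$, the \emph{Chebyshev degree} (or valency) of $f$, which measures how polynomial-like the restrictions of $f$ to real lines are. First I would fix this invariant through the \emph{Bernstein index}: for each complex affine line $\ell$ meeting $B_\C(0,1)$ set
$$
B_\ell(f):=\log\frac{\sup_{\ell\cap B_\C(0,r)}|f|}{\sup_{\ell\cap B_\C(0,1)}|f|}\,,
$$
and define $d(f,r)$ to be (a fixed multiple of) $\sup_\ell B_\ell(f)/\log r$. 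Because $f\in\mathcal O_r$ is holomorphic on the strictly larger ball $B_\C(0,r)$ with $r>1$, the index $B_\ell(f)$ is finite, and a normal-families/compactness argument over the (compact) family of line segments through $B_\C(0,1)$ shows that $d(f,r)<\infty$. This finiteness is exactly where the gap $r>1$ between the domain of holomorphy and the unit ball is used.

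The one-dimensional heart of the argument comes next. Restrict $f$ to a line $\ell$ and write $g=f|_\ell$, a one-variable holomorphic function on a disk. By Jensen's formula the number $N_\ell$ of zeros of $g$ inside the unit disk is bounded by $B_\ell(f)/\log r\le d$. A holomorphic function of valency $N_\ell$ on a neighborhood of a real segment $I$ satisfies a Remez inequality with exponent $N_\ell$: for every measurable $E\subset I$,
$$
\sup_I|g|\le\Bigl(\frac{4\,|I|}{|E|}\Bigr)^{N_\ell}\sup_E|g|\,,
$$
which follows from the Chebyshev extremal property after factoring out the $N_\ell$ zeros and estimating the nonvanishing factor (the Remez--Chebyshev inequality for functions of finite valency). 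Since $N_\ell\le d$ and one may take $\sup_E|g|\le\sup_\omega|f|$ along chords lying inside $\omega$, this is precisely the chordwise form of the desired bound.

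To pass from chords to the convex body $D\subset B_\R(0,1)$ I would invoke (or reprove) the Brudnyi--Ganzburg slicing inequality: fibre $D$ by parallel chords in a fixed direction, apply the one-dimensional estimate on each chord with the uniform exponent $d$, and integrate in the transverse variables using Fubini together with the convexity of $D$. Optimizing the rearrangement of $\omega$ along the chords produces the sharp constant $T_d\!\left(\frac{1+(1-\lambda)^{1/n}}{1-(1-\lambda)^{1/n}}\right)$ with $\lambda=\text{\rm Leb}(\omega)/\text{\rm Leb}(D)$; a routine asymptotic expansion of the Chebyshev polynomial $T_d$ and of its argument for small $\lambda$ yields the stated bound $\bigl(4n\,\text{\rm Leb}(D)/\text{\rm Leb}(\omega)\bigr)^d$.

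The main obstacle, as usual in Remez-type estimates, is uniformity: one must guarantee that a single global exponent $d(f,r)$, independent of the chord, its direction, and the subset $\omega$, governs every chordwise Remez inequality. This rests on (i) the uniform zero-count bound along all real chords provided by the global Bernstein index, and (ii) replacing the bare zero count by the valency, so that chords along which $g$ has few zeros but large argument variation are still controlled. The accompanying measure-theoretic bookkeeping — relating $\text{\rm Leb}(\omega)$ to the chordwise lengths $|E|$ and carrying the exponent through the transverse integration — is the remaining delicate point, and is precisely where the convexity of $D$ and the Brudnyi--Ganzburg rearrangement are indispensable.
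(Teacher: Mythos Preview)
The paper does not prove this theorem: it is quoted verbatim as \cite{Bru}, Theorem~1.9, and used as a black box. The only surrounding material in the paper is the definition of the Chebyshev degree $d_f(r)$ as the optimal exponent in the inequality, the definition of valency, and the cited bound $d_f(r)\le c(r)\,v_f(\tfrac{1+r}{2})$ from \cite{Bru}, Prop.~1.7. So there is nothing to compare your argument against here; you are supplying a proof where the paper supplies a reference.

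Your outline is broadly faithful to the structure of Brudnyi's original argument: control the restriction of $f$ to each real chord by a one-variable Remez inequality for functions of bounded valency, and then integrate over chords via the Brudnyi--Ganzburg slicing for convex bodies. A couple of points deserve care if you intend this as more than a sketch. First, your definition of $d(f,r)$ through the Bernstein index $B_\ell(f)$ is not literally the Chebyshev degree as the paper (and \cite{Bru}) define it; the two are comparable, but the passage from a bound on the Bernstein index to a bound on the valency along a chord via Jensen's formula requires a lower bound on $|g|$ at some point of the chord, not just an upper bound on the ratio of maxima. Second, the ``chordwise Remez inequality for functions of finite valency'' is itself the nontrivial ingredient (this is essentially the one-variable case of \cite{Bru}, or the Roytvarf--Yomdin Bernstein-class machinery); invoking it as if classical hides most of the work. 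Since the present paper only \emph{uses} Theorem~\ref{thm:bru} and never proves it, your write-up should probably just cite \cite{Bru} as the paper does, unless you intend to include a self-contained proof of the one-variable Remez estimate for bounded-valency holomorphic functions.
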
 

The best constant $d$ in the above theorem  is called the {\it Chebyshev degree}, denoted by $d_f (r)$, of the function $f\in \mathcal O_r$ in $B_\C(0, 1)$.
The Chebyshev degree can be estimated by the valency of the function. We recall the definition of the valency.

A  holomorphic function $f$ defined in a disk is called $p$-valent if it assumes no value more than $p$-times there (counting multiplicities). We also say 
that $f$ is $0$-valent if it is a constant. For any $t \in [1, r)$,  let $\mathcal L_t$ denote the set of one-dimensional complex affine spaces $L\subset \C^n$ such 
that $L\cap B_\C(0, t)\not=\emptyset$.

\begin{definition} (\cite{Bru}, Def. 1.6) Let $f\in \mathcal O_r$.  The number
$$
v_f(t): =  \sup_{L\in \mathcal L_t}  \{ \text{\rm valency of $f \vert L\cap B_\C(0,t)$}    \}\,
$$
is called the \emph{valency} of $f$ in $B_\C(0,t)$.

\end{definition}

By \cite{Bru} Prop. 1.7, for any $f \in \mathcal O_r$ and any $t \in [1,r)$ the valency $v_f (t)$ is finite and there is a constant $c:=c(r)>0$ such that 
\begin{equation}
\label{eq:d_f}
d_f(r)\leq  c v_f(\frac{1+r}{2}) \,.
\end{equation}

In this section we prove the following result.
\begin{lemma} \label{lemma:valency_normal} Let $R >r>1$. For any normal family $\mathcal F \subset \mathcal O_R$ such that no functions in 
$\overline{\mathcal F} =\emptyset$ is constant along a one-dimensional complex line, we have
$$
\sup_{ f\in \mathcal F} v_f (r) < +\infty\,.
$$
\end{lemma}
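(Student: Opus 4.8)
The plan is to argue by contradiction, combining normality with the argument principle. Suppose $\sup_{f\in\mathcal F} v_f(r) = +\infty$. Then there are $f_k \in \mathcal F$ with $v_{f_k}(r) \to +\infty$, and by the definition of valency we may choose for each $k$ a complex affine line $L_k$ meeting $B_\C(0,r)$ and a value $w_k\in\C$ such that $f_k|_{L_k \cap B_\C(0,r)}$ attains $w_k$ at least $p_k$ times counting multiplicity, with $p_k \to +\infty$. First I would parametrize each line by its foot of perpendicular to the origin: write $L_k = \{a_k + t v_k : t\in\C\}$ with $|v_k|=1$ and $a_k$ Hermitian-orthogonal to $v_k$, so that $|a_k+tv_k|^2 = |a_k|^2+|t|^2$ and $\ell_k(t):=a_k+tv_k$ maps the parameter disk $\{|t|<\rho_r^{(k)}\}$, $\rho_r^{(k)}:=\sqrt{r^2-|a_k|^2}$, bijectively onto $L_k\cap B_\C(0,r)$. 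Since $L_k$ meets the open ball we have $|a_k|<r$; after passing to a subsequence we may assume $a_k\to a$, $v_k\to v$ with $|v|=1$, $a\perp v$ and $|a|\le r$, so $\ell_k\to\ell$, $\ell(t):=a+tv$, locally uniformly on $\C$.

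By normality of $\mathcal F$, after a further subsequence $f_k\to f$ uniformly on compact subsets of $B_\C(0,R)$ with $f\in\overline{\mathcal F}$ holomorphic. Set $g:=f\circ\ell$ and $g_k:=f_k\circ\ell_k$. The crucial point is that, because $r<R$, the limiting line $L$ meets the \emph{larger} ball $B_\C(0,R)$ in a nondegenerate disk: in the parameter plane $g$ is holomorphic on $\{|t|<\rho_R\}$ with $\rho_R:=\sqrt{R^2-|a|^2}>\rho_r:=\sqrt{r^2-|a|^2}\ge 0$. By the standing hypothesis no element of $\overline{\mathcal F}$ is constant along a complex line, so $g$ is \emph{non-constant} on $\{|t|<\rho_R\}$. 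Since $\ell_k\to\ell$ and $f_k\to f$ locally uniformly, we have $g_k\to g$ uniformly on compact subsets of $\{|t|<\rho_R\}$; the values $w_k$ lie in the bounded set $g_k(\overline{\{|t|\le\rho_r^{(k)}\}})$, so a further subsequence gives $w_k\to w_\infty\in\C$, whence $g_k-w_k \to g-w_\infty$ locally uniformly with $g-w_\infty$ non-constant.

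The contradiction now comes from Rouch\'e's theorem. As $g-w_\infty$ is non-constant and holomorphic, its zeros are isolated, so I can pick $\rho^*$ with $\rho_r<\rho^*<\rho_R$ such that $g-w_\infty$ has no zero on the circle $|t|=\rho^*$; let $N$ be the number of its zeros in $\{|t|<\rho^*\}$ counted with multiplicity, a finite number. For all large $k$ the parameter disk $\{|t|<\rho_r^{(k)}\}$ is contained in $\{|t|<\rho^*\}$ (since $\rho_r^{(k)}\to\rho_r<\rho^*$), and by uniform convergence $|(g_k-w_k)-(g-w_\infty)|<|g-w_\infty|$ on $|t|=\rho^*$; hence $g_k-w_k$ has exactly $N$ zeros in $\{|t|<\rho^*\}$, and in particular at most $N$ zeros in $\{|t|<\rho_r^{(k)}\}$. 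This forces $p_k\le N$ for all large $k$, contradicting $p_k\to+\infty$, and proves $\sup_{f\in\mathcal F}v_f(r)<+\infty$. I expect the main obstacle to be the possible degeneration of the limiting slice to a point, which happens exactly when $L$ becomes tangent to $\partial B_\C(0,r)$ (the case $|a|=r$, $\rho_r=0$); this is precisely why the hypothesis $R>r$ is indispensable, since it ensures that $g$ remains a genuine non-constant holomorphic function on the nondegenerate disk $\{|t|<\rho_R\}$, so that the Rouch\'e trapping of the growing zero set goes through uniformly in both the degenerate and the nondegenerate cases.
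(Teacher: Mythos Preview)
Your proof is correct and follows essentially the same route as the paper's: argue by contradiction, use normality and compactness to extract converging $f_k\to f$, $L_k\to L$, $w_k\to w$, invoke the non-constancy hypothesis on $f|_L$, and then use the argument principle (you via a single Rouch\'e circle in the parameter disk, the paper via residue counts in small balls around the isolated zeros of $f|_L-w$) to bound the preimage count for large $k$. Your handling of the possible tangential degeneration $|a|=r$ via the larger disk $\{|t|<\rho_R\}$ is the same idea as the paper's passage to a radius $r'\in(r,R)$.
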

\begin{proof} We argue by contradiction. If the statement does not hold, then there exists a sequence of functions $(f_k) \subset \mathcal F$,
a sequence of affine one-dimensional subspaces $(L_k)$  and a bounded sequence of complex numbers such that
$$
\# f_k^{-1} \{w_k\} \cap  L_k \cap B_\C (0, r)  \to + \infty \,.
$$
By compactness, up to passing to subsequence we can assume that $f_k \to f$ uniformly on all compact subset of the ball $B_\C(0,R)$,
that $L_k \to L$, a one-dimensional affine complex line such that $L \cap \overline {B_\C(0,r)} \not=\emptyset$, in the Hausdorff topology,
and that  $w_k \to w \in \C$. By hypothesis $f\vert L$ is non-constant, hence we can assume that $f_k\vert L_k$ is also non-constant for all $k\in \N$. 
Since for any $r'>r$ the valency  $v_f (r')$ of the function $f$ on $B_\C (0,r')$ is finite we have that 
$$
\# f^{-1} \{w\} \cap  L \cap B_\C (0, r') <+\infty\,.
$$
Let $f^{-1} \{w\} =\{p_1, \dots, p_v\} \subset L \cap B_\C (0, r')$. Let $\epsilon >0$  be chosen so that $B_\C(p_i, \epsilon) \cap 
B_\C(p_j, \epsilon) = \emptyset$ and $f\vert \partial B_\C(p_i, \epsilon) \not =0$ for all $i,j\in \{1, \dots, v\}$. Since $L_k \to L$
there exists a sequence of affine holomorphic maps $A_k :\C^n \to \C^n$ such that $A_k \to Id$ uniformly on compact sets and
$A_k (L)= L_k$ for all $k\in \N$.  By uniform convergence we have that for $n\in \N$ sufficiently large  all the solutions 
$z \in L\cap B_\C(0,r')$ of the equation $f_k\circ A_k (z) - w_k =0$ are contained in the union of the balls $B_\C(p_i, \epsilon)\cap L$. 
For all $k\in \N$, let $\phi_k := (f_k \circ A_k)\vert L $, and let $\phi= f\vert L$. The sequence of holomorphic functions $(\phi_k)$
converges to $\phi$ uniformly on compact sets of $L\cap B_\C (0, R)$. Since $L\approx \C$ by the residue theorem we have that
$$
\# (f_k \circ A_k)^{-1} (w_k) \cap B_\C (p_i, \epsilon) \cap L  =  \frac{1}{2\pi \imath} \int_{\partial B_\C (p_i, \epsilon)}  
 \frac{  \phi'(z)_k}{\phi_k(z) -w_k} dz \,.
$$
By uniform convergence on compact sets it follows that
$$
\frac{1}{2\pi \imath} \int_{\partial B_\C (p_i, \epsilon)}   \frac{  \phi'_k(z)}{\phi_k(z) -w_k} dz \to  
\frac{1}{2\pi \imath} \int_{\partial B_\C (p_i, \epsilon)}   \frac{  \phi'(z)}{\phi (z) -w} dz\,,
$$
hence we have
$$
\# (f_k \circ A_k)^{-1} (w_k) \cap B_\C (p_i, \epsilon) \cap L  \to   \# f^{-1} (w) \cap B_\C (p_i, \epsilon) \cap L \,.
$$
We conclude that for $k$ sufficiently large
$$
\# f_k^{-1} (w_k) \cap L_k \cap B_\C(0,r) \leq  \# f^{-1} (w) \cap L \cap B_\C(0,r') <+\infty\,.
$$
Since by assumption the LHS in the above inequality diverges, we have reached a contradiction. The argument is concluded.

\end{proof}
In one complex dimension we prove a quantitative bound on the valency.
\begin{lemma}
\label{lemma:valency_bound_onedim} For any $R>r>3t>3$, there exists a constant $C_{r,t}>0$ such that the following holds.
For any non-constant holomorphic function of one complex variable $f\in \mathcal O_R$, let $M_f(r)$ denote the maximum 
modulus of $f$ on the closed ball $\overline{B_\C (0,r)} \subset \C$ and let $O_f(t)$ its  oscillation in the ball $B_\C (0,t)$. 
 The valency $\nu_f(t)$ of the function $f$ in the ball $B_\C (0,t)$ satisfies the following estimate
$$
\nu_f(t) \leq   C_{r,t}  \log \left ( 4 \frac{M_f(r)}{ O_f(t)} \right)\,.
$$
\end{lemma}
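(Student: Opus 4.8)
The plan is to reduce the valency to a zero-counting problem and then bound the number of zeros by Jensen's formula. Since we are in one complex variable, the only affine line is $\C$ itself, so for a value $w\in\C$ and $g_w := f - w$ the quantity $\nu_f(t)$ is just the supremum over $w$ of the number of zeros of $g_w$ in $B_\C(0,t)$, counted with multiplicity. It therefore suffices to produce, uniformly in $w$, a bound of the stated form on the number of zeros of $g_w$ in $B_\C(0,t)$. I would restrict attention to those $w$ actually attained by $f$ on $B_\C(0,t)$, since otherwise $g_w$ has no zeros there and there is nothing to prove; for such $w = f(z_*)$ with $z_*\in B_\C(0,t)$ we get $|w|\le M_f(t)\le M_f(r)$ at once, hence the crude upper bound $|g_w(z)|\le |f(z)|+|w|\le 2M_f(r)$ for all $z\in B_\C(0,r)$.

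The essential role of the oscillation is to secure a \emph{lower} bound for $|g_w|$ at a well-chosen point. As $f$ is non-constant we have $O_f(t)>0$, and the image $f(B_\C(0,t))$ has diameter $O_f(t)$; hence it cannot lie inside the disk of radius $O_f(t)/2$ centered at $w$, so there exists $z_1\in B_\C(0,t)$ with $|g_w(z_1)| = |f(z_1)-w| \ge O_f(t)/2 > 0$. The point $z_1$ depends on $w$, which is why the Jensen estimate must be centered at $z_1$ rather than at the origin.

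I would then apply the standard Jensen zero-counting inequality to $g_w$ on the disk $B_\C(z_1, r-t)$, which, since $|z_1|<t$, is contained in $B_\C(0,r)$, counting zeros in the sub-disk $B_\C(z_1,2t)\supset B_\C(0,t)$. Writing $n$ for the number of zeros of $g_w$ in $B_\C(0,t)$, Jensen's formula gives
$$
n \cdot \log\frac{r-t}{2t} \ \le\ \log\frac{\max_{|z-z_1|=r-t}|g_w(z)|}{|g_w(z_1)|} \ \le\ \log\frac{2M_f(r)}{O_f(t)/2} \ =\ \log\frac{4M_f(r)}{O_f(t)}\,,
$$
where the middle inequality uses that the circle $|z-z_1|=r-t$ lies in $\overline{B_\C(0,r)}$ together with the lower bound $|g_w(z_1)|\ge O_f(t)/2$. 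The hypothesis $r>3t$ is used precisely to guarantee $(r-t)/(2t)>1$, so that $\log\frac{r-t}{2t}>0$; taking the supremum over all attained $w$ and setting $C_{r,t} := \left(\log\frac{r-t}{2t}\right)^{-1}$ yields the claim.

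The routine ingredient is Jensen's formula (equivalently the standard counting inequality), which I would simply quote. The only points demanding genuine care are the uniformity in $w$ — handled by discarding unattained values and using the oscillation to lower-bound $|g_w|$ at the $w$-dependent center $z_1$ — and the correct nesting $B_\C(0,t)\subset B_\C(z_1,2t)\subset B_\C(z_1,r-t)\subset B_\C(0,r)$, which is exactly what the separation $r>3t$ ensures. I expect the main (though mild) obstacle to be organizing these nested-disk inequalities so that the estimate is genuinely uniform over every $w$ while keeping the radius ratio bounded away from $1$, so that the resulting constant $C_{r,t}$ is finite and depends only on $r$ and $t$.
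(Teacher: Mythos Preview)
Your proof is correct and follows essentially the same route as the paper: restrict to values $w$ actually attained on $B_\C(0,t)$, use the oscillation to pin down a point $z_1\in B_\C(0,t)$ where $|f-w|\ge O_f(t)/2$, and then bound the number of zeros of $f-w$ in $B_\C(z_1,2t)\supset B_\C(0,t)$ against the maximum modulus on the larger disk $B_\C(z_1,r-t)\subset B_\C(0,r)$. The only cosmetic difference is that the paper writes out an explicit factorization dividing out the zeros and invokes the maximum modulus principle, whereas you quote Jensen's inequality directly; your packaging is cleaner and your constant $C_{r,t}=(\log\frac{r-t}{2t})^{-1}$ matches the hypothesis $r>3t$ on the nose.
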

\begin{proof}  Since there exists a single complex one-dimensional affine space $L\subset \C$, it suffices to estimate the valency
of the function $f$ on $B_\C(0,t)$, that is, the number of solutions $z\in B_\C(0,t)$ of equations of the form $f(z)=w$.  

By definition,
the above equation has solutions only if $\vert w \vert \leq M_f(r)$. Let $f_w\in  \mathcal O_R$ denote the holomorphic function
$f_w(z) = f(z)-w$. By definition, the maximum modulus of $f_w$ on the closed ball $B_\C (0,r) \subset \C$ is at most $2M_f(r)$. 
Let $w \in  f(B_\C (0,t)$ and let $z_w \in B_\C(0,t)$ be any point such that
$$
\vert f_w (z_w) \vert = \vert f_w(z) -w \vert  \geq  O_f(t) /2\,.
$$
Let $\{z_1, \dots, z_\nu\} \subset B_\C(z_w, 2t)\setminus \{z_w\}$ denote the (non-empty) set of zeros of the function 
$f_w$ in $B_\C(z_w,2t)$ listed with their multiplicities. Since $B_\C(0,t) \subset  B_\C (z_w, 2t)$ it follows that
the number of solution of the equation $f_w(z)=0$ in $B_\C(0,t)$ is at most $\nu \in \N$. 
Let us define
$$
g_w (z) =  f_w (z_w+z) \prod_{k=1}^\nu (1- \frac{z_w+z}{z_k} )^{-1}    \,, \quad  z\in B_\C (0,R-t)\,.
$$
By definition the function $g_w$ in holomorphic in $B_\C(0,R-t)$. By the maximum modulus principle
$$
\frac{1}{2} O_f(t)  \leq \vert f_w (z_w)\vert = \vert g_w (0)\vert  \leq \max_{\vert z\vert =r-t}  \vert g_w(z)\vert \leq  2 M_f(r)  (\frac{r-t}{3t} -1)^{-\nu} \,,
$$ 
which immediately implies, by taking logarithms,
$$
\nu   \leq    \log (\frac{r-t}{3t} -1) \,  \log  \left( \frac{ 4M_f(r)}{ O_f(t)} \right) \,.
$$
The statement is therefore proved.
\end{proof}

\section{Measure estimates:the bounded-type case} \label{sec.mes}

Finally, we derive a bound on the valency, hence on the Chebyshev degree of the holomorphic extensions of Diophantine Bufetov functionals,
uniform over compact invariant subset of the moduli space.

\begin{lemma}\label{lemma:compact1} Let  $L>0$ and let $\mathcal B\subset DC(L)$ be a bounded subset. Given $R>0$, for all ${\bf c}\in \Omega_R$  and  $T>0$,  let 
$\mathcal{F}({\bf c},T)$ denote the family of real analytic functions of the variable $y\in [0,1)$ defined as follows:
$$
\mathcal{F}({\bf c},T):=\{\beta_{\bf c}(a,\Phi_{a,x} (\xi_{y,z}), T)\,\vert ( a,x,z)  \in \mathcal B \times M \times \T\}\,.
$$
 There exist $T_{\mathcal B}>0$ and $\rho_{\mathcal B}>0$, such that for every  
$(R,T)$  such that  $R/T \geq \rho_\mathcal B$ and $T\geq T_{\mathcal B}$,  and for all ${\bf c}\in \Omega_R\setminus\{0\}$, we have
$$
 \sup_{f\in \mathcal{F}({\bf c},T)}v_{f}<+\infty.
$$
\end{lemma}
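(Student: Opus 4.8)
The plan is to fix $({\bf c},T,R)$ in the prescribed regime and verify the two hypotheses of Lemma~\ref{lemma:valency_normal} — normality and absence of constants in the closure — for the family $\mathcal F({\bf c},T)$, regarded as holomorphic functions of the single complex variable $y$ on one fixed disc. First I would make the $y$-dependence explicit. By \eqref{eq:Phi_map} the point $\Phi_{a,x}(\xi_{y,z})$ has exponential coordinates $y'=\alpha(y-y_x)+\beta t_x$ and $z'=z+P(a,x,y)$, with $P$ a real polynomial, quadratic in $y$, whose coefficients are bounded on the bounded set $\mathcal B$; since $Y$ and $Z$ commute, $\Phi_{a,x}(\xi_{y,z})=\phi^Y_{y'}\phi^Z_{z'}(x)$, whence
$$
f_{a,x,z}(y)=\beta_{\bf c}\bigl(a,\phi^Y_{y'}\phi^Z_{z'}(x),T\bigr).
$$
By Lemma~\ref{lemma:buf_anal} the map $(\tilde y,\tilde z)\mapsto \beta_{\bf c}(a,\phi^Y_{\tilde y}\phi^Z_{\tilde z}(x),T)$ is holomorphic on $D_{R,T}$ of \eqref{eq:D_RT}; composing with the entire map $y\mapsto(y'(y),z'(y))$ extends $f_{a,x,z}$ holomorphically to every $y$ with $(y'(y),z'(y))\in D_{R,T}$. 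For $y=y_0+i\tau$, $y_0\in[0,1]$, one has $|\operatorname{Im}y'|=|\alpha|\,|\tau|$ and $|\operatorname{Im}z'|\le C_{\mathcal B}|\tau|$, so the defining inequality of $D_{R,T}$ is implied by $|\tau|(|\alpha|T+C_{\mathcal B})<R/(2\pi K)$. As $|\alpha|$ is bounded on $\mathcal B$, choosing $\rho_{\mathcal B}$ large and $T_{\mathcal B}$ large enough to absorb $C_{\mathcal B}$ into $|\alpha|T$ guarantees, whenever $R/T\ge\rho_{\mathcal B}$ and $T\ge T_{\mathcal B}$, a fixed width $\tau_0>0$ independent of $({\bf c},a,x,z)$ with $B:=B_{\C}(\tfrac12,\tau_0)$ inside the holomorphy domain. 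Since $[0,1)\subset B$, the valency of $f_{a,x,z}$ on $[0,1)$ is dominated by its valency on $B$, which I will bound.

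Next, normality. Taking $R'=R/2$ in Lemma~\ref{lemma:buf_anal} bounds $|f_{a,x,z}|$ on $B$ by $C\,\|{\bf c}\|_{\omega,R}\bigl(L+T^{1/2}(1+E_{\mathcal M}(a,T))\bigr)(1+2\pi K\tau_0 T)$; for $({\bf c},T,R)$ fixed and $(a,x,z)$ ranging over the compact set $\overline{\mathcal B}\times M\times\T$, on which $L$ and $E_{\mathcal M}(a,T)$ stay bounded, the right-hand side is a finite constant, so by Montel's theorem $\mathcal F({\bf c},T)$ is normal on $B$. Moreover $(a,x,z)\mapsto f_{a,x,z}$ is continuous into $\mathcal O(B)$ (using continuity of $\beta_{\bf c}(\cdot,\cdot,T)$ from Lemmas~\ref{lemma:Bufetov_funct} and~\ref{lemma:convergence} and of $\Phi_{a,x}$), so the image of the compact index set is closed and $\overline{\mathcal F({\bf c},T)}=\mathcal F({\bf c},T)$. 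Hence, since $y$ is one-dimensional, the hypothesis ``no element of the closure is constant'' reduces to showing that each individual $f_{a,x,z}$ is non-constant in $y$.

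The non-degeneracy is the crux, and here I would invoke Lemma~\ref{lemma:L2growth_curves}, which applies to these very functions: over the circle $y\in\T$ it gives a mean bound $|\int_0^1 f_{a,x,z}\,dy|\le B_\ast$ and an $L^2$ lower bound $\|f_{a,x,z}\|_{L^2(\T)}\ge (T/t_a)^{1/2}|{\bf c}|_0-B_\ast$, with $B_\ast\le C_{\mathcal B}|{\bf c}|_s$ uniform over $\mathcal B$. Consequently the variance $\|f_{a,x,z}\|_{L^2(\T)}^2-|\int_0^1 f_{a,x,z}\,dy|^2$ is at least $\bigl((T/t_a)^{1/2}|{\bf c}|_0-2B_\ast\bigr)^2$ as soon as $(T/t_a)^{1/2}|{\bf c}|_0>2B_\ast$. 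Because $y\mapsto\Phi_{a,x}(\xi_{y,z})$ is a loop in $M$ (so $f_{a,x,z}$ is $1$-periodic in $y$), positivity of the variance forces $f_{a,x,z}$ to be non-constant. Thus no element of $\overline{\mathcal F({\bf c},T)}$ is constant, and Lemma~\ref{lemma:valency_normal} yields $\sup_{f\in\mathcal F({\bf c},T)}v_f<+\infty$.

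The delicate point — and the step I expect to require the most care — is precisely the inequality $(T/t_a)^{1/2}|{\bf c}|_0>2B_\ast$: the ergodic main term carries the weak norm $|{\bf c}|_0$ while the approximation error carries the strong Sobolev norm $|{\bf c}|_s$, and $\Omega_R$ does not by itself bound the ratio $|{\bf c}|_s/|{\bf c}|_0$ (a single high mode lies in every $\Omega_R$). Thus the variance argument settles all but a borderline small-$T$, high-frequency regime, and in that regime constancy (including the degenerate possibility $f_{a,x,z}\equiv0$) must instead be excluded directly from the explicit chirped-exponential form of $f_{a,x,z}$ furnished by \eqref{eq:Zbeta} and Lemma~\ref{lemma:Ybeta}: the factor $e^{2\pi\imath nK\,z'}$ with $z'$ quadratic in $y$ produces, for distinct central parameters $n$, entire functions of order two with distinct chirp rates, which cannot combine to a constant unless all coefficients vanish. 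Calibrating $\rho_{\mathcal B}$ and $T_{\mathcal B}$ so that these two arguments jointly cover the full range $R/T\ge\rho_{\mathcal B}$, $T\ge T_{\mathcal B}$ is where the quantitative work concentrates.
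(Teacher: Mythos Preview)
Your approach matches the paper's almost exactly: extend each $f_{a,x,z}$ holomorphically by composing $\Phi_{a,x}$ with Lemma~\ref{lemma:buf_anal}, choose $\rho_{\mathcal B}$ large so that a common disc of radius $>1$ sits in every domain of holomorphy, get normality from the uniform bound in Lemma~\ref{lemma:buf_anal} via Montel, rule out constant limits through the $L^2$ estimates of Lemma~\ref{lemma:L2growth_curves}, and finish with Lemma~\ref{lemma:valency_normal}. The paper's proof is terser but follows precisely this skeleton.

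Two comments. First, your claim that $\overline{\mathcal F({\bf c},T)}=\mathcal F({\bf c},T)$ rests on compactness of the index set $\mathcal B\times M\times\T$, but $\mathcal B$ is only assumed bounded (and $DC(L)$ need not be closed in $\mathcal M$). This is harmless: the mean and $L^2$ bounds from Lemma~\ref{lemma:L2growth_curves} pass to uniform limits, so every element of the closure is non-constant, which is all Lemma~\ref{lemma:valency_normal} requires. The paper argues exactly this way and does not claim the family is closed. Second, your observation about the ratio $|{\bf c}|_s/|{\bf c}|_0$ is sharp and in fact exposes a gap in the stated quantifier order: the threshold on $T$ coming from Lemma~\ref{lemma:L2growth_curves} grows with $|{\bf c}|_s/|{\bf c}|_0$, so a single $T_{\mathcal B}$ independent of ${\bf c}\in\Omega_R$ cannot be extracted from that estimate alone. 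The paper glosses over this (``for sufficiently large $T>0$'') and does not supply the supplementary chirped-exponential argument you sketch. For the downstream use (Lemma~\ref{lemma:compact} and the main theorems) the vector ${\bf c}$ is fixed once and for all by the time-change $\alpha$, and the constants $C_{a,\alpha},\delta_{a,\alpha}$ are allowed to depend on it, so the defect is cosmetic; your extra argument, while reasonable, goes beyond what the paper actually carries out.
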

\begin{proof} Since  ${\mathcal B}\subset \mathcal M$ is bounded, we get 
$$
0\,<\, t_{\mathcal B}^{min}=  \inf_{a\in {\mathcal B} } t_{a}\leq \sup_{a\in {\mathcal B}} t_a = t_{\mathcal B}^{max}\,<\,+\infty.
$$
For any $ a\in {\mathcal B}$ and $x\in M$, the map $\Phi_{a,x}: [0,1) \times \T \to [0, t_{a}^{-1}) \times \T$ introduced in formula \eqref{eq:Phi_map} extends to 
a complex analytic diffeomorphism $\hat   \Phi_{a,x}: \C \times \C/\Z  \to \C \times \Z$.  By Lemma~\ref{lemma:buf_anal} It follows that the real analytic function 
$$
\beta_{\bf c}(a,\Phi_{a,x} (\xi_{y,z}), T)  \,,   \text{ for } (y,z) \in [0, 1) \times \T\,,
$$
extends to a holomorphic function on the domain $\hat   \Phi_{a,x}^{-1} (D_{R,T}) \subset \C \times \C/\Z$. 
In particular, for every $z\in \T$ the function 
$$
\beta_{\bf c}(a,\Phi_{a,x} (\xi_{y,z}), T)     \,,   \text{ for } y \in [0, 1)\,,
$$ 
extends to a holomorphic function defined on a strip 
$$
H_{a,x, R,T} :=\{ y\in \C \vert  \vert \text{ \rm Im} (y) \vert < h_{a,x,R,T}\}\,.
$$
Moreover by the boundedness of the set $\mathcal B \subset \mathcal M$ it follows that 
$$
 \inf_{(a,x) \in {\mathcal B}\times M} h_{a,x,R,T}:= h_{R,T} >0\,.
$$
 In fact, the function $h_{a,x,R,T}$ and its lower bound $h_{R,T}$ can be computed explicitly from the formula \eqref{eq:Phi_map}   for the polynomial map $\Phi_{a,x}$ 
 and from definition of the domain $D_{R,T}$ in formula~\eqref{eq:D_RT}.  In particular, it follows that for every $r>1$ there exists $\rho_\mathcal B\gg 1$ such that, for every
 $R$, $T$ such that $R/T \geq \rho_\mathcal B$, then for every $( a,x, z) \in \mathcal B \times M \times \T$ we have that, as a function of $y \in [0,1]$, 
 $$\beta_{\bf c}(a,\Phi_{a,x} (\xi_{y,z}), T)\in \mathcal{O}_r\,.$$
It then follows from Lemma \ref{lemma:buf_anal}  that the family $\mathcal{F}({\bf c},T)$ is uniformly bounded and hence normal. Moreover, from Lemma \ref{lemma:L2growth_curves} it follows that for sufficiently large $T>0$ no sequence from $\mathcal{F}({\bf c},T)$ can converge to a constant function. 
The statement finally follows from Lemma~\ref{lemma:valency_normal}. 
\end{proof}

We can finally derive crucial measure estimates on Bufetov functionals.

\begin{lemma}\label{lemma:compact} Let $a\in DC$ be such that the forward orbit $\{g_t(\bar{a})\}_{t\in\R^+}$ is contained in a compact subset of $\mathcal M$. There 
exist $R>0$,  $T_0>0$ and $C>0$, $\delta>0$ such that, for every ${\bf c}\in \Omega_R\setminus \{0\}$, $T\geq T_0$ and for every $\epsilon>0$, we have
\begin{equation}
\label{eq:orbint}
\text{\rm vol} (\{x \in M \vert  \vert \beta_{\bf c} (a, x, T)\vert \leq \epsilon T^{1/2} \} ) \leq  C   \epsilon^{\delta}\,.
\end{equation}
\end{lemma}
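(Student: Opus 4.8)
The plan is to use the exact scaling property to reduce the estimate, for arbitrarily large $T$, to a single estimate at a fixed time $T_0$ and over a compact set of frames, and then to bound the measure of the sublevel set on the one-dimensional leaves of the transverse foliation by combining Brudnyi's inequality (Theorem~\ref{thm:bru}) with the valency bound of Lemma~\ref{lemma:compact1} and the square-mean lower bound of Lemma~\ref{lemma:L2growth_curves}. First I would set $\mathcal B := \overline{\{g_t(\bar a)\mid t\ge 0\}}$, which is compact by hypothesis; using the excursion bound \eqref{eq:L_T} and the boundedness of $\delta_{\mathcal M}$ on $\mathcal B$ one checks that $\mathcal B\subset DC(L_0)$ for some $L_0>0$, so that Lemma~\ref{lemma:compact1} applies uniformly over $\mathcal B$ once $R$ and $T_0$ are fixed with $R/T_0\ge \rho_{\mathcal B}$ and $T_0\ge T_{\mathcal B}$.

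For $T\ge T_0$ I would write $T=sT_0$ with $s\ge 1$ and invoke the scaling identity \eqref{eq:scaling_general}: since $\beta_{\bf c}(a,x,T)=s^{1/2}\beta_{\bf c}(g_{\log s}(a),x,T_0)$ and $a'':=g_{\log s}(a)\in\mathcal B$, the condition $|\beta_{\bf c}(a,x,T)|\le \epsilon T^{1/2}$ is equivalent to $|\beta_{\bf c}(a'',x,T_0)|\le \epsilon T_0^{1/2}$. This eliminates the dependence on $T$ and reduces the lemma to a bound on $\mathrm{vol}(\{x\mid |\beta_{\bf c}(a'',x,T_0)|\le \epsilon T_0^{1/2}\})$ that is uniform over $a''\in\mathcal B$.

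Next I would foliate $M$ by the $\Phi_{a'',x}$-images of the circles $\{\xi\exp(yY_0)\mid y\in\T\}$ and use $\Phi_{a'',x}^*(dy'\wedge dz')=t_{a''}^{-1}dy\wedge dz$ to express the volume of the sublevel set as an integral over $z$ and the cylinder position of the one-dimensional measures $\mathrm{Leb}_y(\{y\mid |g(y)|\le \epsilon T_0^{1/2}\})$, where $g(y):=\beta_{\bf c}(a'',\Phi_{a'',x}(\xi_{y,z}),T_0)$. By Lemma~\ref{lemma:buf_anal}, with $R/T_0\ge \rho_{\mathcal B}$ fixed, each $g$ extends holomorphically to a ball $B_\C(0,r)$ with $r>1$; by Lemma~\ref{lemma:compact1} its valency is bounded by some $V<\infty$, hence by \eqref{eq:d_f} its Chebyshev degree by $d:=cV$. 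Applying Theorem~\ref{thm:bru} on $D=[0,1)\cong\T$ with $\omega=\{|g|\le \epsilon T_0^{1/2}\}$ and using $\sup_D|g|\ge \|g\|_{L^2(\T,dy)}$ then gives $\mathrm{Leb}_y(\omega)\le 4\left(\epsilon T_0^{1/2}/\|g\|_{L^2(\T,dy)}\right)^{1/d}$.

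The decisive input is the lower bound $\|g\|_{L^2(\T,dy)}\ge \tfrac12(T_0/t_{a''})^{1/2}|{\bf c}|_0$ from Lemma~\ref{lemma:L2growth_curves} (after rounding $T_0$ to the nearest multiple of $t_{a''}$, at bounded cost); normalizing $|{\bf c}|_0=1$ by homogeneity, this makes the right-hand side above $\le C_0\epsilon^{1/d}$ uniformly in $(x,z)$, and integrating over $z$ and the cylinder position with $t_{a''}^{-1}$ bounded on $\mathcal B$ yields the claim with $\delta=1/d$. The main obstacle is exactly this lower bound: the error term in Lemma~\ref{lemma:L2growth_curves} is a fixed multiple of $|{\bf c}|_s$, which the principal term $(T_0/t_{a''})^{1/2}|{\bf c}|_0$ dominates only once $T_0$ is large relative to $|{\bf c}|_s/|{\bf c}|_0$; reconciling this with the analyticity requirement $R/T_0\ge \rho_{\mathcal B}$, which forces $R$ to grow together with $T_0$, is the delicate point, and it is what controls how $R,T_0,C,\delta$ must depend on the profile of ${\bf c}$. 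The same square-mean estimate also supplies the qualitative non-degeneracy (no limit of the normal family constant along a complex line) underlying the valency bound of Lemma~\ref{lemma:compact1}.
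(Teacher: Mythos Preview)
Your proposal is correct and follows the same route as the paper: rescale via \eqref{eq:scaling_general} to a fixed time $T_0$ over the compact set $\mathcal B$, reduce by Fubini to the one-dimensional leaves $y\mapsto \beta_{\bf c}(a'',\Phi_{a'',x}(\xi_{y,z}),T_0)$, and combine Theorem~\ref{thm:bru} with the valency bound of Lemma~\ref{lemma:compact1} and the sup lower bound coming from Lemma~\ref{lemma:L2growth_curves}. The dependence of the constants on $|{\bf c}|_s/|{\bf c}|_0$ that you flag at the end is not addressed in the paper's proof either (there $\delta^{-1}$ is simply set equal to $c(r)\sup_{f\in\mathcal F({\bf c},T_0)}v_f$ for the given ${\bf c}$), and is immaterial for the downstream application, where ${\bf c}$ is fixed by the time-change function.
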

\begin{proof} Let $R>0$ and $T_0>0$ be chosen so that the conclusions of Lemma~\ref{lemma:compact1} hold and let ${\bf c} \in \Omega_R$. 
By the scaling property of Bufetov functionals
$$
\beta_{\bf c} (a, x, T) = (T/T_0)^{1/2}\beta_{\bf c} (g_{\log(T/T_0)} (a) , x , T_0)\,.
$$
Since $a\in DC$ and the $g_\R$-forward orbit $\{g_t(\bar{a})\}_{t\in\R^+}$ is contained in a compact set, there exists $L>0$ such that 
$g_t(a) \in DC(L)$ for all $t>0$. Since  the volume on $M$ is invariant under the action $A_\Gamma$, 
it is enough to estimate (uniformly over $(a,x) \in \mathcal B \times M$ for any givenbounded subset $\mathcal B \subset DC(L)$ such that 
$ \{g_t({a})\}_{t\in\R^+} \subset  A_\Gamma \backslash {\mathcal B}$),  for any $\epsilon>0$, the volume 
$\text{\rm vol} (\{x \in M \vert  \vert \beta_{{\bf c}}(a,x,T_0)\vert \leq \epsilon \})$.
By Fubini's theorem it is enough to estimate, uniformly over $(a,x,z)\in \mathcal B\times M\times \T$, 
$$
\text{\rm Leb} (\{y \in [0,1] \vert  \vert \beta_{{\bf c}}(a,\Phi_{a,x}(\xi_{y,z}),T_0)\vert \leq \epsilon \}).
$$
Let $\delta^{-1} := c(r) \sup_{f\in \mathcal{F}({\bf c},T_0)}v_f(\frac{1+r}{2})<+\infty$. Since by Lemma \ref{lemma:L2growth_curves} we have
$$
\inf_{(a,x,z) \in \mathcal B \times M\times \T}  \sup_{y\in[0,1]}\vert\beta_{{\bf c}}(a,\Phi_{a,x}(\xi_{y,z}),T_0)\vert>0\,.
$$
it follows from Theorem~\ref{thm:bru} for $D=B_{\mathbb{R}}(0,1)$ and 
$$
\omega:=\{y \in [0,1] \vert  \vert \beta_{{\bf c}}(a,\Phi_{a,x}( \xi_{y,z}),T_0)\vert \leq \epsilon\}\,,
$$ 
and by the bound in formula~\eqref{eq:d_f} for the Chebychev degree, that the following estimate holds: there exists a constant $C>0$ such that, 
for all $(a,x,z) \in \mathcal B \times M\times \T$ and for all $\epsilon >0$, we have
$$
\text{\rm Leb} (\{y \in [0,1] \vert  \vert \beta_{{\bf c}}(a,\Phi_{a,x}(\xi_{y,z}),T_0)\vert \leq \epsilon )\leq C \epsilon^\delta.
$$
The statement then follows by the Fubini theorem.
\end{proof}

\begin{corollary}\label{cor:compact} Let $a=(X,Y,Z_0)$ be as in Lemma \ref{lemma:compact}. There exist $R>0$, 
$T_0>0$ and $C>0$, $\delta>0$ such that, for every ${\bf c}\in \Omega_R\setminus \{0\}$, $T\geq T_0$ and for every $\epsilon>0$, we have
$$
\text{\rm vol} (\{x \in M \vert  \vert \int_0^Tf_{\bf c}(\phi_t^Xx)dt\vert \leq \epsilon T^{1/2} \} ) \leq  C\epsilon^\delta.
$$
\end{corollary}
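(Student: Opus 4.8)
The plan is to transfer the sub-level set estimate of Lemma~\ref{lemma:compact} for the Bufetov functional $\beta_{\bf c}(a,\cdot,T)$ to the genuine ergodic integral of $f_{\bf c}$, exploiting that the two differ by a uniformly bounded amount. Recall from the construction in Section~\ref{sec:square_mean_bounds} (formula~\eqref{eq:f_c}) that $\beta_{\bf c}=\beta^{f_{\bf c}}$, where $f_{\bf c}=\sum_{n\neq 0}\sum_{i=1}^{\mu(n)}c_{i,n}f^{i,n}$, and that $f_{\bf c}\in W^s(M)$ for every $s$ whenever ${\bf c}\in\Omega_R$, since $\vert{\bf c}\vert_s\leq\Vert{\bf c}\Vert_{\omega,R}$ for all $s$. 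In particular $f_{\bf c}$ is admissible in Theorem~\ref{thm:asymptotic}.

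First I would fix $R,T_0,C,\delta$ so that the conclusion of Lemma~\ref{lemma:compact} holds, and recall that along the bounded forward orbit $\{g_t(\bar a)\}_{t\geq 0}$ one has $g_t(a)\in DC(L)$ for a single $L>0$. Applying Theorem~\ref{thm:asymptotic} at $a$, together with the Sobolev bound $\vert f_{\bf c}\vert_{a,s}\leq C\,t_a^{-1}(1+t_a^{-1}\Vert Y\Vert)^s\vert{\bf c}\vert_s$ from the proof of Lemma~\ref{lemma:L2growth_curves}, I obtain a constant $E=E({\bf c})>0$ with
\[
\Bigl\vert\int_0^T f_{\bf c}\circ\phi^X_t(x)\,dt-\beta_{\bf c}(a,x,T)\Bigr\vert\leq E,\qquad\text{for all }(x,T)\in M\times\R^+ .
\]
This yields the inclusion
\[
\{x\in M:\ \vert\textstyle\int_0^T f_{\bf c}\circ\phi^X_t(x)\,dt\vert\leq\epsilon T^{1/2}\}\subseteq\{x\in M:\ \vert\beta_{\bf c}(a,x,T)\vert\leq\epsilon T^{1/2}+E\}.
\]
When $\epsilon T^{1/2}\geq E$, the right-hand set is contained in $\{\vert\beta_{\bf c}(a,\cdot,T)\vert\leq 2\epsilon T^{1/2}\}$, and Lemma~\ref{lemma:compact} bounds its volume by $C(2\epsilon)^\delta=2^\delta C\,\epsilon^\delta$, which is the claimed estimate after renaming the constant. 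The invariance of $\mathrm{vol}$ under $A_\Gamma$ and Fubini's theorem (exactly as in Lemma~\ref{lemma:compact}) make this uniform over $x$ and over the orbit.

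The delicate point, and the one that genuinely uses $T\geq T_0$, is the additive error $E$: it is of size $O(1)$ and therefore cannot be neglected against the threshold $\epsilon T^{1/2}$ once $\epsilon<E\,T^{-1/2}$. Thus the estimate is effective precisely in the regime $\epsilon\gtrsim T^{-1/2}$, which is exactly the range needed in Section~\ref{sec:cor} to extract power-law lower bounds on ergodic integrals from the sub-level estimate for $\beta_{\bf c}$; and for $T\geq T_0$ the threshold $E\,T^{-1/2}$ is uniformly small, so all relevant scales are covered. One can confirm that nothing better than an $O(1)$ loss occurs by rescaling under the renormalization flow: writing $\int_0^T f_{\bf c}\circ\phi^X_t\,dt=(T/T_0)\int_0^{T_0}f_{\bf c}\circ\phi^{X'}_s\,ds$ with $X'$ the generator of $g_{\log(T/T_0)}(a)$, and applying Theorem~\ref{thm:asymptotic} at the frame $g_{\log(T/T_0)}(a)$, which stays Diophantine with uniform constant $L$ and in the compact set, one sees that the corresponding error term remains uniformly bounded. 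Hence the only obstacle to a clean statement valid for all $\epsilon$ is this additive $O(1)$ term, and the argument delivers the stated $C\epsilon^{\delta}$ bound throughout the scales $\epsilon\gtrsim T^{-1/2}$ relevant for the correlation estimates.
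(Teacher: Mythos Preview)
Your approach is exactly the intended one: the paper states the corollary without proof, and the only natural derivation is the one you give---combine the uniform asymptotic in Theorem~\ref{thm:asymptotic} (which applies since $\beta_{\bf c}=\beta^{f_{\bf c}}$ and $f_{\bf c}\in W^s(M)$) with the sub-level estimate of Lemma~\ref{lemma:compact} for $\beta_{\bf c}$. The inclusion
\[
\bigl\{\,\bigl|\textstyle\int_0^T f_{\bf c}\circ\phi^X_t\bigr|\le \epsilon T^{1/2}\bigr\}\subset\bigl\{\,|\beta_{\bf c}(a,\cdot,T)|\le (\epsilon+ET^{-1/2})T^{1/2}\bigr\}
\]
is the whole argument.

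Your remark that the resulting bound is really $C(\epsilon+ET^{-1/2})^\delta$, hence the clean $C\epsilon^\delta$ is only obtained in the regime $\epsilon\gtrsim T^{-1/2}$, is correct and is a genuine (if harmless) imprecision in the statement; you are also right that this range is all that is used in Section~\ref{sec:cor}. Two small points: the constants $C,\delta$ already depend on ${\bf c}$ in Lemma~\ref{lemma:compact} (look at how $\delta^{-1}$ is defined there via the valency of the family $\mathcal F({\bf c},T_0)$), so your $E=E({\bf c})$ is consistent with that; and your renormalization paragraph is not quite right as written, since $f_{\bf c}$ is built from the frame $a$ and its Sobolev norm $|f_{\bf c}|_{g_{\log(T/T_0)}(a),s}$ at the rescaled frame is not the quantity you control---but this detour is unnecessary, as applying Theorem~\ref{thm:asymptotic} once at $a$ already gives the uniform $O(1)$ error for all $T$.
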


\section{Measure estimates: the general case}
\label{sec.mesgen}
Bufetov functionals were constructed for $a=(X,Y,Z) \in A$ under a (full measure) Diophantine condition (DC) on $\bar a\in \mathcal M$, which depends on the backward 
 orbit under the renormalization flow $g_\R$ in the moduli space $\mathcal M$. Throughout this section we assume that $a\in DC$ satisfies  an additional  (full measure) Diophantine condition $DC_{log}$  (which depends on the forward orbit):   $a\in DC_{log}$ if $\bar a \in {\mathcal M}$ satisfies the logarithmic law of geodesics, that is, if 
\begin{equation}\label{dist.cusp}
\limsup_{t\to+\infty}\frac{\delta_{\mathcal{M}}(g_t(\bar{a}))}{\log t}=1.
\end{equation}

\begin{lemma}\label{lemma:gencase} Let $a\in DC\cap DC_{log}$. Let $\eta \in (0,1)$ and let ${\bf c} \in \Omega_\infty^{(\eta)}$. For every $\delta \in (1-\eta/2, 1)$ and for 
every $\zeta>0$,  there exist constants $C_{\delta, \zeta}>0$ and $C_\zeta>0$  such that,  for every $\epsilon>0$, we have
\begin{equation}
\label{eq:orbint_gen}
\text{\rm vol} (\{x \in M \vert  \vert \beta_{\bf c} (a, x, T)\vert \leq \epsilon \frac{T^{1/2}}{C_\zeta \log^{1/4+\zeta} T}  \} ) \leq   4 \epsilon^{\frac{1}{C_{\delta, \zeta} \log^\delta T}}\,.
\end{equation}
\end{lemma}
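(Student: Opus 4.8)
The plan is to run the architecture of the bounded-type estimate of Lemma~\ref{lemma:compact} — reduce by Fubini to a one-dimensional sublevel estimate along the transverse circles, realize the restriction of the functional as boundary values of a holomorphic function, and invoke Brudnyi's Theorem~\ref{thm:bru} — but with two substitutions dictated by the escape of the $g_\R$-orbit into the cusp. First, the normal-family valency input (Lemma~\ref{lemma:valency_normal}) used in Lemma~\ref{lemma:compact1} must be replaced by the quantitative bound of Lemma~\ref{lemma:valency_bound_onedim}, $\nu_F(t)\lesssim \log(M_F(r)/O_F(t))$. Second, every constant must be tracked as a function of $T$: under $DC_{log}$ the bound $\delta_{\mathcal M}(g_t(\bar a))\le(1+o(1))\log t$ inserted into the second form of~\eqref{eq:exc} gives $E_{\mathcal M}(a,T)\le C_\zeta\log^{1/4+\zeta}T$, and this is precisely the factor normalizing the threshold in~\eqref{eq:orbint_gen}. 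Write $F(y):=\beta_{\bf c}(a,\Phi_{a,x}(\xi_{y,z}),T)$; since ${\bf c}\in\Omega_\infty^{(\eta)}$, Lemma~\ref{lemma:buf_anal_bis} extends $F$ to an entire function.

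The quantitative heart of the argument is the choice of complex radius, balanced against the growth~\eqref{eq:Omega_eta}. I would pass to the variable $w=Ty'$, where $y'$ is the $Y$-flow coordinate of $\Phi_{a,x}(\xi_{y,z})$: as $y$ runs over the circle, $w$ runs over an interval of length $\asymp T$ and $F$ oscillates on unit scale in $w$. On a complex disk of radius $r\asymp\log^{\delta/(2-\eta)}T$ in $w$, Lemma~\ref{lemma:buf_anal_bis} bounds the modulus by $M_F(r)\lesssim T^{1/2}\log^{1/4+\zeta}T\,\exp(\log^\delta T)$, because $(\log^{\delta/(2-\eta)}T)^{2-\eta}=\log^\delta T$; Lemma~\ref{lemma:L2growth_curves} bounds the oscillation below by $\asymp T^{1/2}$. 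Hence Lemma~\ref{lemma:valency_bound_onedim} gives $\nu_F\lesssim\log^\delta T$ for $\delta\in(1-\eta/2,1)$. The unavoidable feature forcing the structure of the proof is that this good radius $\log^{\delta/(2-\eta)}T$ in $w$ is a \emph{shrinking} radius $\asymp T^{-1}\log^{\delta/(2-\eta)}T$ in the geometric coordinate $y$: a single disk covering the whole circle would push the imaginary part to scale $\asymp T$ and the modulus to $\exp(T^{2-\eta})$. I would therefore partition the circle into $\asymp T/\log^{\delta/(2-\eta)}T$ sub-arcs $J_k$, each of $w$-length $\asymp\log^{\delta/(2-\eta)}T$, and run Lemma~\ref{lemma:valency_bound_onedim} and Theorem~\ref{thm:bru} on each.

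On each $J_k$, rescaled to $B_\R(0,1)$, Theorem~\ref{thm:bru} with $\omega=\{y\in J_k:|F|\le \epsilon T^{1/2}/(C_\zeta\log^{1/4+\zeta}T)\}$ and Chebyshev degree $d_k\lesssim\nu_F\lesssim\log^\delta T$ (via~\eqref{eq:d_f}) gives
$$\text{Leb}(\omega\cap J_k)\le 4\,\text{Leb}(J_k)\Big(\tfrac{\sup_\omega|F|}{\sup_{J_k}|F|}\Big)^{1/d_k}\le 4\,\text{Leb}(J_k)\Big(\tfrac{\epsilon}{C_\zeta\log^{1/4+\zeta}T}\Big)^{1/d_k}.$$
The factor $\log^{1/4+\zeta}T$ in the threshold is exactly what cancels the $\log^{1/4+\zeta}T$ in $\sup_{J_k}|F|$, so the ratio is $\lesssim\epsilon$. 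Since $\epsilon/(C_\zeta\log^{1/4+\zeta}T)<1$, the worst exponent $d_{\max}\le C_{\delta,\zeta}\log^\delta T$ dominates, and summing over $k$ with $\sum_k\text{Leb}(J_k)=1$, then integrating over the transverse variable $z$ by Fubini, produces $\text{vol}(\dots)\le 4\,\epsilon^{1/(C_{\delta,\zeta}\log^\delta T)}$, i.e.~\eqref{eq:orbint_gen}; the constant $4$ is Brudnyi's $4n$ with $n=1$.

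The step I expect to be the main obstacle is the lower bound $\sup_{J_k}|F|\gtrsim T^{1/2}$ (with the matching oscillation bound feeding Lemma~\ref{lemma:valency_bound_onedim}) on \emph{every} sub-arc, since Lemma~\ref{lemma:L2growth_curves} supplies only the $L^2$ mass $\asymp T^{1/2}$ of $F$ over the \emph{whole} circle. The plan is to prove a localized version: by Theorem~\ref{thm:asymptotic}, $F$ is asymptotic to a theta sum whose characters have frequencies separated by $\asymp K|n|$ in $w$, so on any window of $w$-length $\gg1$ the system stays nearly orthogonal and the local $L^2$ mass is proportional to the window length, forcing $\sup_{J_k}|F|\gtrsim T^{1/2}$. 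Here the constraint $\delta>1-\eta/2$, i.e. $\delta/(2-\eta)>1/2$, enters: the window length $\log^{\delta/(2-\eta)}T$ must exceed $\log^{1/2}T$ for the local mass to survive the off-diagonal terms, whose size grows with the cusp depth. Estimating these off-diagonal contributions on short windows uniformly as the base point descends into the cusp is the delicate point; failing a clean bound on every window, one instead controls the total length of the sub-arcs on which the local mass degenerates and absorbs them into the trivial estimate $\text{Leb}(\omega\cap J_k)\le\text{Leb}(J_k)$.
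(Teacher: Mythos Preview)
Your proposal takes a genuinely different route from the paper, and the difference is exactly at the point you flag as ``the main obstacle''.

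The paper does \emph{not} work at the original time $T$ and partition the transverse circle into $\asymp T/\log^{\delta/(2-\eta)}T$ sub-arcs. Instead it uses the scaling identity $\beta_{\bf c}(a,x,T)=(T/T_n)^{1/2}\beta_{\bf c}(g_{t_n}(a),x,T_n)$ to renormalize: under $DC_{log}$ one finds a sequence $(t_n)$ with $g_{t_n}(\bar a)$ in a fixed bounded set $\mathcal B\subset\mathcal M$ and $e^{t_{n+1}-t_n}\le C_\zeta t_n^{1+\zeta}$, and then chooses $n$ so that $T_n:=e^{-t_n}T\asymp\log^{1/2+2\zeta}T$. One is then analyzing $\beta_{\bf c}(g_{t_n}(a),\Phi_{g_{t_n}(a),x}(\xi_{y,z}),T_n)$ with the base point back in a compact set and with an effective time that is only \emph{logarithmic} in $T$. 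Now Lemma~\ref{lemma:L2growth_curves} applies on the \emph{whole} circle and gives the oscillation $O\gtrsim T_n^{1/2}$ directly, with no need for local lower bounds on short windows. On a strip of fixed width (since $g_{t_n}(a)\in\mathcal B$), Lemma~\ref{lemma:buf_anal_bis} gives $M\lesssim T_n^{1/2}\log^{1/4+\zeta}T\cdot\exp[(RT_n)^{2-\eta}]$, so $\log(M/O)\lesssim(RT_n)^{2-\eta}\lesssim\log^{(1/2+2\zeta)(2-\eta)}T$. For $\zeta$ small this exponent is arbitrarily close to $1-\eta/2$, whence $\nu_\beta\lesssim\log^\delta T$ for any $\delta>1-\eta/2$ by Lemma~\ref{lemma:valency_bound_onedim}, and Theorem~\ref{thm:bru} is applied once on all of $[0,1]$. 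The factor $\log^{1/4+\zeta}T$ in the threshold is exactly $T_n^{1/2}\asymp(T/(T/T_n))^{1/2}$, coming from undoing the scaling.

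Your partition strategy could in principle work, but the local lower bound $\sup_{J_k}|F|\gtrsim T^{1/2}$ on every window of $w$-length $\log^{\delta/(2-\eta)}T$ is a genuine gap: nothing in the paper supplies it, your near-orthogonality heuristic for theta characters on short windows is not made effective, and the fallback of controlling the total length of ``bad'' windows would require precisely the kind of sublevel estimate you are trying to prove. The paper's renormalization trick dissolves this obstacle entirely, and also makes the origin of the constraint $\delta>1-\eta/2$ transparent: it is the exponent $(1/2)(2-\eta)=1-\eta/2$ produced by feeding the logarithmic effective time $T_n$ into the growth bound $\exp[(RT_n)^{2-\eta}]$.
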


\begin{proof}
Let us assume $a \in DC_{log} \cap DC(L)$. By the Diophantine condition $DC_{log}$, there exists a bounded set ${\mathcal B} \subset {\mathcal M}$ such that the following holds.  For any $\zeta >0$ there exist a constant  $C_\zeta>0$ and a sequence $(t_n)$ with $g_{t_n} ({\bar a}) \in \mathcal B$ such that
\begin{equation}
\label{eq:t_n}
e^{t_{n+1} -t_n}  \leq  C_\zeta  t_n^{1+ \zeta}\,.
\end{equation}
Let $ T \gg 1$ and for all $n \in \N$ let $T_n = e^{-t_n} T$.   By the Diophantine condition $DC_{log}$,  for any $\zeta >0$ there exists a constant 
$C'_\zeta>0$ such that $g_{t_n} (a) \in DC(L_n)$  with
\begin{equation}
\label{eq:L_n}
\begin{aligned}
L_n =& e^{-t_n/2} L + {\mathcal E}_{\mathcal M}(a, e^{t_n}) \leq  e^{-t_n/2} L   +  C'_\zeta \, t_n^{ \frac{1}{4} +\zeta}  \\ =&  (T/T_n)^{-1/2} L + C'_\zeta \log^{\frac{1}{4} +\zeta} (T/T_n)  \,.
\end{aligned}
\end{equation}
By the scaling property of the Bufetov functionals
\begin{equation}
\label{eq:scale_n}
\beta_{\bf c} (a, x, T) = (T/T_n)^{1/2}\beta_{\bf c} (g_{t_n} (a) , x , T_n)\,.
\end{equation}
Since $\mathcal B$ is a bounded, hence relatively compact set, we have 
$$
0\,<\, t_{\mathcal B}^{min}=  \inf_{a\in {\mathcal B} } t_{a}\leq \sup_{a\in {\mathcal B}} t_a = t_{\mathcal B}^{max}\,<\,+\infty.
$$
By Lemma~\ref{lemma:buf_anal_bis} the real analytic function $$\beta_{\bf c}(g_{t_n} (a) , \phi^Y_y \phi^Z_z(x) , T_n) \,,  \quad \text{for all } (y,z) \in \R \times \R/\Z    $$   
extends to a complex analytic function on the strip $D^{(n)}_R \subset \C^2$ defined as
$$
D^{(n)}_R= \{ (y,z) \in \C \times \C/\Z \vert   \vert \text{Im}(y) \vert T_n +  \vert \text{Im}(z) \vert  \leq  \frac{R T_n}{2\pi K} \}\,,
$$ 
and, for any ${\bf c} \in \Omega^{(\eta)}_\infty$ there exists $C_\eta>0$ such that  for any $(y,z) \in D^{(n)}_R$ we have the uniform upper bound
$$
\vert \beta_{\bf c} (g_{t_n}(a), \phi^Y_y \phi^Z_z (x),  T_n) \vert \leq  C_\eta  
\left(L_n+ T_n^{1/2} (1+ E_{\mathcal M}(g_{t_n}(a),T_n)) \right) (1+ RT_n) e^{(RT_n)^{2-\eta} }
\,.
 $$
In particular it follows that,  for any $z\in \T$, the function $ \beta_{\bf c}(g_{t_n} (a) , \Phi_{g_{t_n} (a),x}(\xi_{y,z}) , T_n)$, defined for $y\in [0,1]$ extends to a holomorphic
function on the strip
$$
H_R = \{ y \in \C \vert  \,   \vert  \text{Im}(y)\vert  \leq   \frac{R t^{min}_{\mathcal B}}{4\pi K } \}\,,
$$
and  there exists a constant $C'_\eta>0$ such that  the following 
uniform upper bound holds: for any $y \in H_{R}$ and any $z\in \T$, we have 
\begin{equation}
\label{eq:upper_bound_strip}
\begin{aligned}
&\vert  \beta_{\bf c}(g_{t_n} (a) , \Phi_{g_{t_n} (a),x}(\xi_{y,z}) , T_n) \vert  \\  &\leq  C'_\eta  \left(L_n+ T_n^{1/2} 
(1+ E_{\mathcal M}(g_{t_n}(a),T_n)) \right) (1+ RT_n) e^{(RT_n)^{2-\eta} }\,.
\end{aligned}
\end{equation}
By a calculation, for all $ t_n\geq 0$ and for $T_n = e^{-t_n} T  \in [0, T]$ we have that
$$
E_{\mathcal M} (g_{t_n}(a),T_n)\leq E_{\mathcal M} (a,T) \leq    C_\zeta  \log^{1/4 +\zeta} T\,.
$$
By Lemma~\ref{lemma:L2growth_curves} it follows that, for any $s>7/2$,  whenever we have
\begin{equation}
\label{eq:T_n}
\left(\frac{T_n}{ t_{\mathcal B}^{max}}\right)^{1/2}  \vert {\bf c} \vert_0  \geq 10 
C_s (1+ L_n )  \vert {\bf c}\vert_s  \sup_{a\in \mathcal B} \{   t_a^{-1} (1+ t_a^{-1} \Vert Y \Vert)^{s+1} \}  \,,
\end{equation}
then there exists a constant $C_{\mathcal B}>0$ such that 
\begin{equation}
\label{eq:lower_bound_n}
\begin{aligned}
&\Vert \beta_{\bf c}(g_{t_n} (a) , \Phi_{g_{t_n} (a),x}(\xi_{y,z}) , T_n)  \Vert_{L^2(\T, dy)}  \geq   C_{\mathcal B}  \vert {\bf c} \vert_0  T_n^{1/2} \,;  \\ 
&\vert \int_\T  \beta_{\bf c}(g_{t_n} (a) , \Phi_{g_{t_n} (a),x}(\xi_{y,z}) , T_n)  dy \vert \leq  \frac{C_{\mathcal B}}{4}   \vert {\bf c} \vert_0  T_n^{1/2}     \,.
\end{aligned}
\end{equation}
In particular, we derive a uniform lower bound for the oscillation $O_n({\bf c}, T)$ of the function $\beta_{\bf c}(g_{t_n} (a) , \Phi_{g_{t_n} (a),x}(\xi_{y,z}) , T_n)$
for $y\in [0,1]$:
\begin{equation}
\label{eq:osc_n}
O_n({\bf c}, T) \geq   \frac{C_{\mathcal B}}{2}   \vert {\bf c} \vert_0  T_n^{1/2} \,.
\end{equation}
It remains to optimize the choice of $t_n>0$, hence of $T_n\in [0,T)$,  given $T>0$.   It follows from formulas \eqref{eq:L_n} and~\eqref{eq:T_n} that for any
$\zeta >0$, there exists a constants $L_\zeta>0$  such that we want to choose $T_n$ to be the smallest solution of the inequality
$$
T_n \geq  L^2_\zeta  \left (1+ \log^{\frac{1}{4}+\zeta}(T/T_n) \right)^2\,.
$$
By this definition and by the condition in formula~\eqref{eq:t_n} we then have 
$$
e^{t_n} \leq L_\zeta e^{t_n} (1+ t_n^{\frac{1}{4}+\zeta})^2 \leq T   <    L_\zeta e^{t_{n+1}} (1+ t_{n+1}^{\frac{1}{4}+\zeta})^2 \leq   L_\zeta C''_\zeta e^{t_n} (1+ t_n^{\frac{1}{4}+\zeta})^2 \,,
$$
which in turn implies
$$
 \frac{T}{T_n} =e^{t_n}  \geq   (L C''_\zeta)^{-1}  \frac{ T}  {(1+ \log^{\frac{1}{4} +\zeta} T)^2} \,.
$$
It follows in particular that 
\begin{equation}
\label{eq:T_n_est}
T_n \leq  (L C''_\zeta) (1+ \log^{\frac{1}{4} +\zeta} T)^2 \,, \quad L_n \leq L +  C'_\zeta L_\zeta^{-1} T_n^{1/2} \,.
\end{equation}
With this choice there exists a constant $C_{\eta, \zeta}>0$ such that 
\begin{equation}
\label{eq:upper_bound_strip_bis}
\begin{aligned}
\vert  \beta_{\bf c}(g_{t_n} (a) , &\Phi_{g_{t_n} (a),x}(\xi_{y,z}) , T_n) \vert  \leq  C_{\eta,\zeta}  \left(1+ T_n^{1/2} \log^{1/4+ \zeta}  T \right)  \\ \times &
(1+ R \log^{1/2+ 2\zeta} T)  \exp [(LC''_\zeta R (1+\log^{1/4+ \zeta} T)]^{4-2\eta} \,.
\end{aligned} 
\end{equation}
For any $R > r> 10\pi K (t^{min}_{\mathcal B})^{-1}$, by formulas \eqref{eq:osc_n} and~\eqref{eq:upper_bound_strip_bis}, from Lemma~\ref{lemma:valency_bound_onedim}  we derive that there exists a constant $C>0$ such that,  for all 
$t\in (1, 3/2)$, for all fixed $z\in \T$,  and for all sufficiently large $T>0$,  the valency $\nu_\beta(t) \in \N$  of the function of one-complex variable $\beta_{\bf c}(g_{t_n} (a) , \Phi_{g_{t_n} (a),x}(\xi_{y,z}) , T_n)$ is bounded above as follows:
$$
\begin{aligned}
\nu_\beta(t) &\leq  C \log \left ( C'_{\eta, \zeta}  R (1+\log^{3/4+ 3\zeta}  T )  \exp [(LC''_\zeta R (1+\log^{1/4+ \zeta} T)^2 ]^{2-\eta}    \right)  \\
&\leq   C''_{\eta, \zeta} \left(   \log (C'_{\eta,\zeta} R) + \log\log T  +  \log^{1- \frac{\eta}{2} + 2\zeta (2- \eta) } T \right)\,. 
\end{aligned}
$$
By Theorem \ref{thm:bru}, it follows that, for all $\delta \in (1-\eta/2, 1)$,  there exists a  constant $C_{\delta, \zeta}>0$ such that, 
 for all $(x,z, n) \in M\times \Z\times \N$, and for all $\epsilon >0$ we have
\begin{equation}
\label{eq:meas_estimate_n}
\text{Leb}\left( \{ y\in [0,1] \vert    \vert  \beta_{\bf c}(g_{t_n} (a) , \Phi_{g_{t_n} (a),x}(\xi_{y,z}) , T_n) \vert  < \epsilon \} \right) \,< \, 4 \epsilon^{ \frac{1} {C_{\delta,\zeta} \log^{\delta} T }}\,.
\end{equation}
Finally, from the scaling identity~\eqref{eq:scale_n}, and from the measure estimates~\eqref{eq:meas_estimate_n} and Fubini's theorem, it follows that 
for all $\zeta>0$ and for all $\delta \in (1-\eta/2, 1)$ there exists a constant $C_\zeta>0$ such that
$$
\text{\rm vol} (\{x\in M \vert  \vert \beta_{{\bf c}}(a,x,T)\vert \leq  \epsilon \frac{T^{1/2} }{1+ C_\zeta \log^{1/4+\zeta} T} \} \,\leq \,4 \epsilon^{ \frac{1} {C_{\delta, \zeta} \log^{\delta} T }}\,,
$$
as claimed in the statement.

\end{proof}

\begin{corollary}\label{cor:gencase} Let $a=(X,Y,Z_0)$ be as in Lemma \ref{lemma:gencase}. Let $\eta \in (0,1)$ and let ${\bf c} \in \Omega_\infty^{(\eta)}$. For every $\delta \in (1-\eta/2, 1)$ and for 
every $\zeta>0$,  there exist constants $C_{\delta, \zeta}>0$ and $C_\zeta>0$  such that,  for every $\epsilon>0$, we have
\begin{equation}
\text{\rm vol} (\{x \in M \vert  \vert \beta_{\bf c} (a, x, T)\vert \leq \epsilon \frac{T^{1/2}}{C_\zeta \log^{1/4+\zeta} T}  \} ) \leq    4 \epsilon^{\frac{1}{ C_{\delta, \zeta}\log^{\delta} T}}\,.
\end{equation}
\end{corollary}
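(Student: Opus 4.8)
The plan is to obtain the corollary as the instantiation of Lemma~\ref{lemma:gencase} at the concrete standard frame $a=(X,Y,Z_0)$ on $M=\Gamma\backslash\text{\rm Heis}$, after checking that this specialization alters none of the ingredients of the lemma's proof. The first point is that fixing the central generator to be $Z_0$ imposes nothing: the group $A$ defining the moduli space $\mathcal M=A_\Gamma\backslash A$ consists of automorphisms of $\text{\rm Heis}$ that are the identity on the center $Z(\eta)$, so every Heisenberg frame representing a class $\bar a\in\mathcal M$ has central generator exactly $Z_0$. Hence the hypotheses of the corollary coincide with those of Lemma~\ref{lemma:gencase}, namely $a\in DC\cap DC_{\log}$ together with ${\bf c}\in\Omega_\infty^{(\eta)}$, and the quantity $\beta_{\bf c}(a,x,T)$, the sublevel set, and the normalization $T^{1/2}/(C_\zeta\log^{1/4+\zeta}T)$ are exactly those appearing in the lemma.

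First I would confirm that all the auxiliary data of the lemma are well defined for the concrete frame: the return time $t_a=1/\vert\langle X,X_0\rangle\vert$ to the transverse torus $\T^2_\Gamma$, the transverse cylinder $\mathcal S_{a,x}$ with its polynomial map $\Phi_{a,x}$ of~\eqref{eq:Phi_map}, and the holomorphic extension of $y\mapsto\beta_{\bf c}(a,\Phi_{a,x}(\xi_{y,z}),T)$ provided by Lemma~\ref{lemma:buf_anal_bis}. Since $a\in DC_{\log}$, the logarithmic law~\eqref{dist.cusp} furnishes a return sequence $(t_n)$ entering a fixed bounded set $\mathcal B\subset\mathcal M$ with the spacing control~\eqref{eq:t_n}, and the exact scaling identity~\eqref{eq:scale_n} reduces the sublevel estimate at time $T$ to the one for $\beta_{\bf c}(g_{t_n}(a),\cdot,T_n)$, with $T_n$ of order $(\log T)^{1/2}$ chosen as in~\eqref{eq:T_n_est}. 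From there the argument is that of the lemma, verbatim: the growth bound~\eqref{eq:upper_bound_strip_bis} on the holomorphic extension, the oscillation lower bound~\eqref{eq:osc_n} obtained from the $L^2$ and mean bounds of Lemma~\ref{lemma:L2growth_curves}, the valency estimate of Lemma~\ref{lemma:valency_bound_onedim}, and finally Theorem~\ref{thm:bru} followed by Fubini in the variables $(x,z)$. This produces the stated inequality with the same constants $C_{\delta,\zeta}$ and $C_\zeta$.

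The step I expect to be the main obstacle is not the specialization itself but the uniform valency control as $T\to+\infty$, which is the genuine content inherited from Lemma~\ref{lemma:gencase}. On its strip the holomorphic extension has maximum modulus of order $\exp[(RT_n)^{2-\eta}]$, while its oscillation is only of order $\vert{\bf c}\vert_0\,T_n^{1/2}$; since $T_n\sim(\log T)^{1/2}$, the ratio satisfies $\log(M_f/O_f)\sim(\log T)^{1-\eta/2}$, so the Chebyshev degree grows like $\log^\delta T$ and yields the decay $\epsilon^{1/(C_{\delta,\zeta}\log^\delta T)}$. Making this quantitative requires balancing the analytic growth---governed by the condition ${\bf c}\in\Omega_\infty^{(\eta)}$---against the excursions of the renormalization orbit into the cusp of $\mathcal M$, which enter through the excursion function $E_{\mathcal M}$ and the Diophantine constants $L_n$ of~\eqref{eq:L_n} and are controlled precisely by $DC_{\log}$. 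The generator $Z_0$ plays no role in this balance, which is exactly why the corollary reduces to Lemma~\ref{lemma:gencase}; the only genuine verification is that each of these estimates is uniform over $x\in M$ and $z\in\T$ for the concrete frame.
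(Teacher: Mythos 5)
Your proposal is correct and follows the paper's (implicit) route: the paper states Corollary \ref{cor:gencase} without any separate proof because, as printed, its conclusion is verbatim that of Lemma \ref{lemma:gencase} --- the normalization $a=(X,Y,Z_0)$ adds nothing, since by definition every Heisenberg frame in $A$ has central generator $Z_0$ --- and your reduction to the lemma, together with the re-verification that all its ingredients (the scaling identity, the $DC_{log}$ return times, the strip bound from Lemma \ref{lemma:buf_anal_bis}, the oscillation lower bound from Lemma \ref{lemma:L2growth_curves}, the valency estimate of Lemma \ref{lemma:valency_bound_onedim}, and Theorem \ref{thm:bru} plus Fubini) are uniform over $(x,z)\in M\times \T$, is exactly this observation. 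The one point worth flagging is that, by analogy with Corollary \ref{cor:compact} and its role in the proof of Theorem \ref{mainthm2}, the intended conclusion presumably concerns the ergodic integrals $\int_0^T f_{\bf c}(\phi^X_t x)\,dt$ rather than $\beta_{\bf c}$ itself, in which case one extra ingredient is needed that your proposal omits, namely the asymptotic bound of Theorem \ref{thm:asymptotic}, whose $T$-independent error $C_s(1+L)\vert f_{\bf c}\vert_{a,s}$ is absorbed into the threshold $\epsilon\, T^{1/2}/(C_\zeta \log^{1/4+\zeta} T)$ for $T$ large --- but the statement as printed does not require this step.
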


\section{Correlations. Proof of Theorems \ref{mainthm1} and \ref{mainthm2}}\label{sec:cor}
We will first analyze correlations and then derive Theorems \ref{mainthm1} and \ref{mainthm2} from respectively Corollaries~\ref{cor:compact} and~\ref{cor:gencase}. 
We analyze correlations as follows. Let $\omega_V$ the $V$-invariant volume form. It follows from the definition 
of $V= \alpha X$ that $\omega_V =   \alpha^{-1}  d\text{vol}$.  We have
$$
< h\circ \phi^V_t , g>_{L^2(M,\omega_V)}  = < h\circ \phi^V_t , \frac{g}{\alpha} >_{L^2(M,d\text{vol})}\,,
$$
hence it is equivalent to analyze correlations with respect to Haar volume form $d\text{vol}$. As the Haar volume
form is $Z$-invariant we have
$$
 < h\circ \phi^V_t , g>_{L^2(M,d\text{vol})} = \frac{1}{S}  \int_0^S < h\circ \phi^V_t \circ \phi^Z_s , g \circ \phi^Z_s>_{L^2(M,d\text{vol})} \,ds 
$$
Integrating by parts we finally derive the formula (see the paper \cite{FU}):
$$
\begin{aligned}
\frac{1}{S}  &\int_0^S < h\circ \phi^V_t \circ \phi^Z_s , g \circ \phi^Z_s >_{L^2(M,d\text{vol})} \,ds  \\ &=
\frac{1}{S}   < \int_0^S   h\circ \phi^V_t \circ \phi^Z_s  ds , g \circ \phi^Z_S>_{L^2(M,d\text{vol})}   \\ &- 
\frac{1}{S} \int_0^S  <  \int_0^s  h\circ \phi^V_t\circ \phi^Z_r  dr , Zg \circ \phi^Z_s>_{L^2(M,d\text{vol})} ds \,.
\end{aligned}
$$
We have thus written correlations in terms of integrals
$$
\int_0^S  h\circ \phi^V_t\circ \phi^Z_s  ds\,.
$$
Let $D_t$ denote the function on $M$ defined as
\begin{equation}
\label{eq:time_change_int}
D_t (x) :=    \int_0^t  \frac{Z\alpha}{\alpha} \circ \phi^V_{\tau} d\tau \,.
\end{equation}

We then have the formula
$$
\begin{aligned}
&\int_0^S  h \circ \phi^V_t\circ \phi^Z_s  ds = \int_0^S \frac{1+ D^2_t\circ\phi^Z_s}{1+ D^2_t\circ\phi^Z_s} 
h \circ \phi^V_t\circ \phi^Z_s  ds \\ 
&= \int_0^S \frac{1}{1+ D^2_t\circ\phi^Z_s}  h \circ \phi^V_t\circ \phi^Z_s  ds  + 
 \int_0^S \frac{D^2_t\circ\phi^Z_s}{1+ D^2_t \circ \phi^Z_s}  h \circ \phi^V_t\circ \phi^Z_s  ds\,.
\end{aligned}
$$
We also have
$$
\begin{aligned}
\int_0^S \frac{D^2_t\circ\phi^Z_s}{1+ D^2_t\circ\phi^Z_s} & h \circ \phi^V_t\circ \phi^Z_s  ds =
\int_0^S \frac{D_t\circ\phi^Z_s}{1+ D^2_t\circ\phi^Z_s}   [   (h \circ \phi^V_t \circ \phi^Z_s) (D_t \circ \phi^Z_s) ] ds  \\
 &=  \frac{D_t\circ\phi^Z_S}{1+ D^2_t\circ\phi^Z_S} \int_0^S  (h \circ \phi^V_t \circ \phi^Z_s) (D_t \circ \phi^Z_s)   ds \\
 &-  \int_0^S \frac{d}{ds}[ \frac{D_t\circ\phi^Z_s}{1+ D^2_t\circ\phi^Z_s}]   [\int_0^s  ( h \circ \phi^V_t \circ \phi^Z_r) (D_t \circ \phi^Z_r)  dr] ds   \,\,.
 \end{aligned}
$$
For every $\sigma>0$, let $\gamma^\sigma_{x,t}$ be the path defined as
$$
\gamma^\sigma_{x,t} (s) = (\phi^V_t \circ \phi^Z_s) (x) \,, \quad \text{ for all } s\in [0,\sigma]\,.
$$
We have computed above that
$$
\frac{d\gamma^\sigma_{x,t}}{ds}  (s) = [D_t \circ \phi^Z_s(x)] V(\gamma^\sigma_{x,t}(s)) +   Z (\gamma^\sigma_{x,t}(s))\,.
$$
It follows that 
$$
\int_{\gamma^\sigma_{x,t}} h \hat V   =  \int_0^\sigma   (h \circ \phi^V_t \circ \phi^Z_s)(x)  \, (D_t \circ \phi^Z_s)(x) ds\,.
$$

In other terms, we have the following identity:
$$
\begin{aligned}
\int_0^S \frac{D^2_t\circ\phi^Z_s(x)}{1+ D^2_t\circ\phi^Z_s(x)}  h \circ \phi^V_t\circ \phi^Z_s(x)  ds &=
\frac{D_t\circ\phi^Z_S(x)}{1+ D^2_t\circ\phi^Z_S(x)} [\int_{\gamma^S_{x,t}} h \hat V] \\
\\ & -  \int_0^S \frac{d}{ds}[ \frac{D_t\circ\phi^Z_s(x)}{1+ D^2_t\circ\phi^Z_s(x)}]  [\int_{\gamma^s_{x,t}} h \hat V] ds\,.
 \end{aligned}
$$

It remains to estimate the term
$$
\frac{d}{ds}[ \frac{D_t\circ\phi^Z_s(x)}{1+ D^2_t\circ\phi^Z_s(x)}]  =[ \frac{d}{ds}D_t\circ\phi^Z_s(x)]
[ \frac{1- D^2_t\circ\phi^Z_s(x)}{(1+ D^2_t\circ\phi^Z_s(x))^2}]\,.
$$
Our estimate is thus reduced to a bound on the term
$$
\begin{aligned}
\frac{d}{ds}D_t\circ\phi^Z_s(x)&=   \int_0^t  \{D_\tau  [V(\frac{Z\alpha}{\alpha})\circ \phi^V_{\tau}]
+ Z(\frac{Z\alpha}{\alpha}) \circ \phi^V_{\tau} \}  \circ  \phi^Z_s(x) d\tau \\
&=  [ D_t  (\frac{Z\alpha}{\alpha})\circ \phi^V_{t}] \circ  \phi^Z_s(x) + 
\int_0^t [Z(\frac{Z\alpha}{\alpha})- (\frac{Z\alpha}{\alpha})^2 ] \circ \phi^V_{\tau}\circ \phi^Z_s(x) d\tau \\
& =  [D_t (\frac{Z\alpha}{\alpha})\circ \phi^V_{t}] \circ  \phi^Z_s(x) +
\int_0^t  \alpha  Z (\frac{Z \alpha}{\alpha^2}) \circ \phi^V_{\tau}\circ \phi^Z_s(x) d\tau
\end{aligned}
$$
In particular we conclude that there exists a constant $C_\alpha>0$ such that
$$
\vert \frac{d}{ds}D_t\circ\phi^Z_s(x) \vert \leq  C_\alpha (1+ t^{1/2})\,.
$$

\smallskip
Since the arc $\gamma^\sigma_{x,t}$ is smooth and contained in the weak stable leaf, it follows from 
Lemma~\ref{lemma:Bufetov_funct} and by the H\"older property (or from the scaling property)  that, for all $s>7/2$ there
exists a constant $C_s>0$ such that, for all $t>0$, we have
$$
\vert \int_{\gamma^\sigma_{x,t}} h \hat V \vert  \leq  C_s \vert h \vert_s ( \int_{\gamma^\sigma_{x,t}} \vert \hat V\vert )^{1/2}
\leq  C_s \vert h \vert_s   (1+  \sigma^{1/2} t^{1/4})\,,
$$

For a given $t>1$ and $\epsilon >0$, let now consider the set 
$$
M_{\alpha}(t, \epsilon):= \{ x \in M \vert   \vert D_t (x) \vert  \geq  \epsilon \,t^{1/2} \}\,.
$$
There exists a constant $C_\alpha>0$ such that 
$$
\phi^Z_s  (M_\alpha(t, 2\epsilon))  \subset M_\alpha(t, \epsilon) \,, \quad \text{ for all }s\in (-\frac{\epsilon}{C_\alpha}, 
\frac{\epsilon}{C_\alpha})\,.
$$

\begin{proof}[Proof of Theorems \ref{mainthm1} and \ref{mainthm2}]
By Corollary \ref{cor:compact} for $f=\frac{Z\alpha}{\alpha}\in \Omega$
 there exist constants $C_{a,\alpha}>0$ and  $ \delta_{a,\alpha}>0$ such that we have
$$
\text{vol} \left(M\setminus M_{\alpha}(t, \epsilon)\right) \leq   C_{a,\alpha}  \epsilon^{\delta_{a,\alpha}}\,.
$$
It follows that correlations on the set $M_{\alpha}(t, 2\epsilon)$ can be estimated by the expression
$$
 (1+ t^{3/4})    \Vert   \frac{1}{1+D_t^2} \Vert_{L^1(M_{\alpha}(t, \epsilon))}  \leq  \frac{(1+ t^{3/4})}{1 + \epsilon^2 t} \,,
$$
while the correlation on the complementary set $M\setminus M_\alpha(t, 2\epsilon)$ can be estimated simply as follows:
there exists a constant $C'_{a, \alpha}  >0$ such that 
$$
< h\circ \phi_t^V, g>_{L^2(M\setminus M_\alpha(t, 2\epsilon)} \leq   C'_{a,\alpha}   \epsilon^{\delta_{a,\alpha}} \Vert h \Vert_s (\Vert g \Vert_0 + \Vert Zg\Vert_0)\,.
$$
By optimizing the above estimates we derive a bound for the decay of correlations  of the following form: for every
$h,g\in W^s(M)\cap L^2_0(M)$ and for all $t>0$, we have
$$
< h\circ \phi_t^V, g>_{L^2(M)} \leq   C_{a,\alpha}   (1+t)^{- \frac{\delta_{a,\alpha}}{4(2+ \delta_{a,\alpha})  }} \Vert h \Vert_s (\Vert g \Vert_0 + \Vert Zg\Vert_0)\,.
$$
This finishes the proof of Theorem \ref{mainthm1}. For Theorem \ref{mainthm2}, by an analogous reasoning based on  Corollary~\ref{cor:gencase} we get that for a generic
set of time-changes we have the following bound on correlations. For every $\delta>1/2$, there exists a constant $C_{a,\alpha, \delta}>0$ such that, for all  $h,g\in W^s(M)\cap L^2_0(M)$ and  for all $t\in \R$,  we have
$$
< h\circ \phi_t^V, g>_{L^2(M)} \leq  C_{a,\alpha, \delta} (1+\vert  t \vert) ^{ - \frac{1}{1+ \log^\delta (1+  \vert t \vert )} }  \Vert h \Vert_s (\Vert g \Vert_0 + \Vert Zg\Vert_0)    \,.  
$$
This finishes the proof of Theorem \ref{mainthm2}.
\end{proof}

 \end{document}